\documentclass[12pt, a4paper]{amsart}
\usepackage{amsmath}
\usepackage{geometry,amsthm,graphics,tabularx,amssymb,shapepar}
\usepackage[cmyk]{xcolor}
\usepackage{appendix}
\usepackage{amscd}
\usepackage[all]{xypic}
\usepackage{ulem}
\usepackage{bm}
\usepackage{extarrows}

\makeatletter
\newcommand*{\rom}[1]{\expandafter\@slowromancap\romannumeral #1@}
\makeatother

\newcommand{\BC}{{\mathbb {C}}}

\newcommand{\BN}{{\mathbb {N}}}

\newcommand{\BR}{{\mathbb {R}}}

\newcommand{\CI}{{\mathcal {I}}}

\newcommand{\CO}{{\mathcal {O}}}

\newcommand{\RC}{{\mathrm {C}}}

\newcommand{\Hom}{{\mathrm{Hom}}}

\newcommand{\Spec}{{\mathrm{Spec}}}

\newcommand{\wt}{\widetilde}
\newcommand{\wh}{\widehat}

\renewcommand{\r}{\mathfrak r}

\newcommand{\m}{\mathfrak m}

\newcommand{\C}{\mathbb{C}}
\newcommand{\R}{\mathbb R}

\newcommand{\M}{\mathbf{M}}

\newcommand{\J}{\mathbf{J}}

\newcommand{\cf}{\textit{cf}.~}
\newcommand{\la}{\langle}
\newcommand{\ra}{\rangle}

\newcommand{\be}{\begin {equation}}
\newcommand{\ee}{\end {equation}}
\newcommand{\bee}{\begin {equation*}}
\newcommand{\eee}{\end {equation*}}

\newcommand{\qaq}{\quad\textrm{and}\quad}

\renewcommand{\mid}{\,:\,}

\theoremstyle{Theorem}

\theoremstyle{Theorem}

\theoremstyle{Theorem}

\theoremstyle{Theorem}

\theoremstyle{Plain}

\theoremstyle{remark}

\theoremstyle{remark}

\theoremstyle{Definition}
\newtheorem{dfn}{Definition}[section]

\newtheorem{cord}[dfn]{Corollary}
\newtheorem{prpd}[dfn]{Proposition}
\newtheorem{thmd}[dfn]{Theorem}
\newtheorem{lemd}[dfn]{Lemma}
\newtheorem{remarkd}[dfn]{Remark}
\newtheorem{exampled}[dfn]{Example}
\numberwithin{equation}{section}

\begin{document}

\title[Formal manifolds]{Formal manifolds: local structure of morphisms, and formal submanifolds}

\author[F. Chen]{Fulin Chen}
\address{School of Mathematical Sciences, Xiamen University,
 Xiamen, 361005, China} \email{chenf@xmu.edu.cn}

\author[B. Sun]{Binyong Sun}
\address{Institute for Advanced Study in Mathematics \& New Cornerstone Science Laboratory, Zhejiang University,  Hangzhou, 310058, China}
\email{sunbinyong@zju.edu.cn}

\author[C. Wang]{Chuyun Wang}
\address{Institute for Theoretical
 Sciences and School of Science, Westlake University,
 Hangzhou, 310030, China}
\email{wangchuyun@westlake.edu.cn}

\subjclass[2020]{58A05
} \keywords{formal manifold, inverse function theorem,  constant rank theorem, formal submanifold}

\begin{abstract}
This is a paper in a series that studies smooth relative Lie
algebra homologies and cohomologies based on the theory of formal manifolds
and formal Lie groups. In three previous papers, we introduce the notion of formal manifolds and study their basic theory, focusing on function spaces and Poincar\'e’s lemma.
 In this paper, we further explore the foundational framework of formal manifolds, including the local structure of constant rank morphisms (such as inverse function theorem and constant rank theorems) as well as the theory of formal submanifolds.
 \end{abstract}

\maketitle

 \tableofcontents

\section{Introduction and  main results}
This paper is a continuation of \cite{CSW1}, \cite{CSW2} and \cite{CSW3}.
We refer to the Introduction of \cite{CSW1} for the background and motivations for the study of formal manifolds.  

\subsection{Basics on formal manifolds}
We first recall from \cite{CSW1} some notions concerning formal manifolds.
Let $N$ be a smooth manifold, and let  $k\in \BN$. 
For every open subset $U\subset N$, write $\RC^\infty(U)$ for the $\BC$-algebra of complex-valued smooth functions on $U$, and write 
	\[
	\CO_N^{(k)}(U):=\RC^\infty(U)[[y_1, y_2, \dots, y_k]]
	\]
 for the $\BC$-algebra of formal power series with coefficients in $\RC^\infty(U)$. With the obvious restriction maps, we have a sheaf 
 \[\CO_N^{(k)}:\ U\mapsto \CO_N^{(k)}(U)\] of $\BC$-algebras. This gives a locally ringed space 
 \[N^{(k)}:=(N, \CO_N^{(k)})\]  over $\mathrm{Spec}(\BC)$.

\begin{dfn}\label{def:formalmanifold}
	A formal manifold  is a locally ringed space $(M, \CO)$ over $\mathrm{Spec}(\BC)$ such that
	\begin{itemize}
		\item the topological space $M$ is paracompact and Hausdorff; and
		\item for every $a\in M$, there is an open  neighborhood $U$ of $a$ in $M$ and $n,k\in \BN$ such that $(U, \CO|_U)$ is isomorphic to $(\R^n)^{(k)}$ as locally ringed spaces over $\mathrm{Spec}(\BC)$.
	\end{itemize}
\end{dfn}

By abuse of notation,  we will often not distinguish a formal manifold  $(M, \CO)$ with its underlying topological space $M$, and call $\CO$ the structure sheaf of it. 

Throughout this paper, let $(M,\CO)$ be a formal manifold. 
An element in $\CO(M)$ is called a formal function on $M$. By a chart of $M$, we mean a triple $(U,N^{(k)},\vartheta)$, where $U$ is an open subset of $M$, $N$ is an open submanifold  of $\BR^n$, $n,k\in \BN$, and 
\[\vartheta=(\overline{\vartheta},\vartheta^*):\  (U,\CO|_U)\rightarrow N^{(k)}
\]
is an isomorphism of locally ringed spaces. 
If there is no confusion, we will simply denote a chart $(U,N^{(k)},\vartheta)$ of $M$ by $U$. 
An open cover $\{U_\gamma\}_{\gamma\in \Gamma}$ of $M$ is called an atlas if for every $\gamma\in \Gamma$, $U_\gamma$ is a chart of $M$.

Let $a\in M$. The uniquely determined natural numbers $n$ and $k$ in Definition \ref{def:formalmanifold} are 
 called the dimension and the degree of $M$ at $a$, to be denoted by $\dim_a M$ and $\deg_a M$, respectively.
We let $\m_a$ denote the maximal ideal of the stalk $\CO_a$, and define the formal stalk  at $a$ to be  the local $\BC$-algebra
\be\label{eq:fs}
\widehat \CO_a:=\varprojlim_{k\in \BN} \CO_a/\m_a^k.
\ee
Similarly, let $\widehat{\m}_a$ denote the maximal ideal of $\widehat \CO_a$.
It is a fact  that 
\[
\widehat{\CO}_a\cong \C[[x_1,x_2,\dots,x_n,y_1,y_2,\dots,y_k]]\quad (n:=\dim_a M,\ k:=\deg_a M)
\]
as $\C$-algebras (see \cite[Proposition 2.6]{CSW1}). Then we obtain a formal manifold
\be \label{eq:M_a}M_a:=(\{a\},\widehat{\CO}_a).\ee
Here and below, we will often not distinguish a sheaf over a singleton with its space of global sections.

Let $\m_\CO$ denote the ideal of $\CO$ defined by
\be\label{eq:defmo}
\m_\CO(U):=\{f\in \CO(U)\mid f_a\in \m_a  \textrm{ for all $a\in U$}\},
\ee
where $U$ is an open subset of $M$, and $f_a$ is the germ of $f$ at $a$. 
Form the quotient sheaf  $\underline{\CO}:=\CO/{\m_{\CO}}$  over $M$. 
Then \be \label{eq:underM}\underline{M}:=(M,\underline{\CO})\ee is a smooth manifold,  called the reduction of $M$.  

In the rest part of the Introduction, let 
\[\varphi=({\overline\varphi}, \varphi^*):\ (M',\CO')\rightarrow (M,\CO)\] 
be a morphism of formal manifolds, and let $b\in M'$. 
By definition,  ${\overline\varphi}:  M'\rightarrow M$ is a continuous map and
\be\label{phin}
\varphi^*: \ {\overline\varphi}^{-1}\CO\rightarrow \CO'
\ee
is a $\BC$-algebra sheaf homomorphism that induces local homomorphisms on the stalks. 
For  open subsets $U$ of $M$ and $U'$ of $M'$ such that $\overline\varphi(U')\subset U$,
we write
\[\varphi^*_{U,U'}:\ \CO(U)\rightarrow \CO'(U')\] for the homomorphism of $\C$-algebras  induced by \eqref{phin}.
If there is no confusion, we will denote   $\varphi^*_{U,U'}$ by $\varphi^*_U$ or  $\varphi^*$ for simplicity.
We denote by 
\[
\varphi_b^*:\ \CO_{\overline{\varphi}(b)}\rightarrow \CO'_b\qquad \text{and}\qquad 
\varphi_b^*: \ \wh{\CO}_{\overline{\varphi}(b)}\rightarrow \wh{\CO'}_b \qquad (b\in M')
\]
the induced local homomorphisms on the stalks and formal stalks, respectively.

We have the obvious morphisms 
\be
\underline{M}\rightarrow M\qquad\text{and}\qquad M_a\rightarrow M\qquad (a\in M).
\ee
Furthermore, there are unique morphisms 
\be\label{eq:reductionvarphi}
\underline \varphi:\ \underline{M'}\rightarrow \underline{M}\qquad\text{and}\qquad \varphi_{b}:\ M'_{b}\rightarrow M_{\overline{\varphi}(b)}\ee   such that the diagrams
\be \label{eq:underlinecommdiag}
\begin{CD}
	M' @>  \varphi >> M\\
	@AAA          @AAA\\
	\underline{M'}@> \underline \varphi >>  \underline{M} \\
\end{CD} \qquad  \text{and}\qquad \begin{CD}
	M' @>  \varphi >> M\\
	@AAA          @AAA\\
	M'_{b}@> \varphi_{b} >> M_{\overline{\varphi}(b)} \\
\end{CD}
\ee
commute, respectively.

Let $n,k\in \BN$, and let $N$ be an open submanifold of $\BR^n$. Write 
 $x_1, x_2, \dots, x_n$ and $y_1,y_2,\dots,y_k$ for the standard coordinate functions and formal variables in $\CO_{N}^{(k)}(N)$, respectively. 
 We  call \[
 (x_1,x_2,\dots,x_n,y_1,y_2,\dots,y_k)\]
 the standard coordinate system of $N^{(k)}$. 
 When $M=N^{(k)}$, the morphism $\varphi: M'\rightarrow N^{(k)}$ is uniquely determined by the $(n+k)$-tuple (see Proposition \ref{prop:genebij})
 \[c_\varphi:=(\varphi^*(x_1),\varphi^*(x_2),\dots,\varphi^*(x_n),\varphi^*(y_1),\varphi^*(y_2),\dots,\varphi^*(y_k))\in (\CO'(M'))^{n+k}.\] 
In view of this, we  often formally say that $\varphi$ is given by 
 \[
 (x_1,x_2,\dots,x_n,y_1,y_2,\dots,y_k)\mapsto c_\varphi.
 \] 

\subsection{Local structure of  morphisms}
In Sections \ref{sec:inv} and \ref{sec:rankthm}, we mainly analyze the local structure of a morphism of formal manifolds using its differentials. As we will explain below,  
the situation is quite a bit more subtle than the analogous theory
of smooth manifolds.

Similar to the case of smooth manifolds,  a tangent vector at a point $a\in M$ is defined to be a derivation on the stalk $\CO_a$ (see Definition \ref{def:tangent}). 
 Write  $\mathrm{T}_a(M)$ for the space of tangent vectors of $M$ at $a$. 
 By taking the transpose of the  homomorphism $\varphi^*_b: \CO_{\overline{\varphi}(b)}\rightarrow \CO'_b$,  it induces  a linear map 
\[
d\varphi_b:\ \mathrm{T}_b(M')\rightarrow \mathrm{T}_{\overline{\varphi}(b)}(M),
\] which is called the differential of $\varphi$ at $b$. 


The most fundamental results on the local structure of smooth maps between smooth manifolds are the inverse function theorem (see \cite[Theorem 4.5]{L}) and its consequences, particularly the constant rank theorem (see \cite[Theorem 4.12]{L}). 
However, the following example shows that the inverse function theorem
cannot be directly generalized to the setting of formal manifolds.

  

\begin{exampled}\label{ex:intro1}
    Let $n,r\in \BN$ with $n\ge r$. We have a morphism 
   $  (\BR^{n-r})^{(r)}\rightarrow (\BR^{n})^{(0)} $
    such that the homomorphism 
    \[
\RC^\infty(\BR^{n})\rightarrow    \RC^\infty(\BR^{n-r})[[y_1, y_2, \dots, y_r]]
    \] of the $\BC$-algebras of formal functions 
    is given by the Taylor series expansion along $\BR^r$. 
For every $k\in \BN$, it induces an obvious morphism 
    \be \label{eq:introex1}
   (\BR^{n-r})^{(r+k)}= (\BR^{n-r})^{(r)}\times (\R^0)^{(k)}\rightarrow (\BR^{n})^{(k)}=(\BR^{n})^{(0)}\times (\R^0)^{(k)}.
    \ee 
  The differential of \eqref{eq:introex1} at every point of $\BR^{n-r}$ is a bijection, while the morphism \eqref{eq:introex1} is not a local isomorphism 
  unless $r=0$.
\end{exampled}

This new phenomenon leads to the following two natural questions. 

\begin{itemize}
\item[(Q1)] When is the morphism $\varphi$  an isomorphism near $b$?
\item[(Q2)] What is the local model
          of $\varphi$  when $d\varphi_b$ is bijective?
\end{itemize}

For the question (Q1), note that if $\varphi$ is an isomorphism near $b$ (and so is $\underline{\varphi}$), then both 
$d\varphi_b$ and $d\underline{\varphi}_b$ are bijective. Conversely, we have the following result.

\begin{thmd}\label{thm:invfun} Let 
$\varphi=({\overline\varphi}, \varphi^*): (M',\CO')\rightarrow (M,\CO)$  
be a morphism of formal manifolds, and let $b\in M'$. Assume that
\[d\varphi_b:\ \mathrm{T}_{b}(M')\rightarrow \mathrm{T}_{\overline\varphi(b)}(M)\quad \text{and}\quad
  d\underline{\varphi}_b:\ \mathrm{T}_{b}(\underline{M}')\rightarrow \mathrm{T}_{\overline\varphi(b)}(\underline{M}) \]  are  both bijective.
  Then there exists an open neighborhood  $U'$ of $b$ in $M'$ such that $U:=\overline\varphi(U')$ is an open subset of $M$, and the restriction$$\varphi|_{U'}:\ (U', \CO'|_{U'})\rightarrow (U, \CO|_{U}) $$ of $\varphi$ on $U'$ is an isomorphism of formal manifolds.
\end{thmd}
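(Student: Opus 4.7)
The plan is to reduce the statement to a purely formal power series problem, by first using the classical inverse function theorem on the reduction to straighten the underlying smooth map, and then inverting the remaining formal deformation with a filtration argument. I would proceed in three stages.

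\textbf{Stage 1: Straighten $\overline{\varphi}$.} Since $d\underline{\varphi}_b$ is bijective, the classical inverse function theorem for smooth manifolds, applied to $\underline{\varphi}: \underline{M'} \to \underline{M}$, supplies an open neighborhood of $b$ on which $\underline{\varphi}$ is a diffeomorphism onto its image. Shrinking to chart neighborhoods $M' \cong N'^{(k')}$ and $M \cong N^{(k)}$, a dimension count forces $n = n'$ (from bijectivity of $d\underline{\varphi}_b$) and $k = k'$ (from the identity $n+k = n'+k'$ forced by bijectivity of $d\varphi_b$). Using the diffeomorphism $\underline{\overline{\varphi}}: N' \to N$ to identify the two smooth bases, I reduce to the case $M = M' = N^{(k)}$ with $\overline{\varphi} = \mathrm{id}_N$.

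\textbf{Stage 2: Extract the differential condition.} In this chart, $\varphi^*$ is a $\C$-algebra endomorphism of $\CO_N^{(k)}(N) = \RC^\infty(N)[[y_1, \ldots, y_k]]$. Because each $\varphi^*_b$ is local and $\overline{\varphi} = \mathrm{id}$, one has
\[
\varphi^*(x_i) = x_i + g_i, \qquad \varphi^*(y_j) = \sum_{l=1}^k y_l\, h_{j,l},
\]
with $g_i \in (y_1,\ldots,y_k)\CO_N^{(k)}(N)$ and $h_{j,l} \in \CO_N^{(k)}(N)$. A direct computation of the induced endomorphism of $\m_b/\m_b^2$ in the basis $\{x_i - b_i,\, y_j\}$ reveals a block-triangular matrix with diagonal blocks $I_n$ and $H(b,0) := (h_{j,l}(b,0))_{j,l}$, so bijectivity of $d\varphi_b$ is equivalent to invertibility of $H(b,0)$. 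Using continuity of $\det H(\cdot, 0)$, I then shrink $N$ so that $H(x, 0)$ is invertible for every $x \in N$.

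\textbf{Stage 3: Invert $\varphi^*$.} The $(y_1, \ldots, y_k)$-adic filtration $F^m := (y_1, \ldots, y_k)^m \CO_N^{(k)}(N)$ is preserved by $\varphi^*$, and the ring is Hausdorff and complete for it. Since $\overline{\varphi} = \mathrm{id}$ forces $\varphi^*(f) \equiv f \pmod{(y_1,\ldots,y_k)}$ for every $f \in \RC^\infty(N)$, the induced endomorphism of $F^m/F^{m+1} \cong \bigoplus_{|I|=m} \RC^\infty(N)\cdot y^I$ is $\RC^\infty(N)$-linear and coincides with the $m$-th symmetric power of multiplication by $H(x, 0)$, hence is bijective. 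A standard successive-approximation argument then produces a two-sided $\C$-algebra inverse $\psi^*$ degree by degree in $y$; since $\psi^*$ is determined by the tuple $(\psi^*(x_i), \psi^*(y_j))$, it arises from a morphism $\psi: N^{(k)} \to N^{(k)}$ of formal manifolds, which is the sought inverse to $\varphi$.

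\textbf{Main obstacle.} The conceptual heart of the theorem lies in Stage 1: Example \ref{ex:intro1} shows that bijectivity of $d\varphi_b$ alone is insufficient, precisely because $\overline{\varphi}$ can shuffle smooth directions into formal ones. The hypothesis on $d\underline{\varphi}_b$ is exactly what is needed to straighten $\overline{\varphi}$ to the identity and decouple the two kinds of directions; after that, Stages 2 and 3 reduce to a formal power series calculation governed by the single matrix $H(b, 0)$. The most delicate technical point is to verify that $\varphi^*(f) \equiv f \pmod{(y_1,\ldots,y_k)}$ for an arbitrary smooth $f$, which is needed to make the associated graded $\RC^\infty(N)$-linear; this should follow from the structural description of morphisms between formal manifolds recalled in Section 1.1.
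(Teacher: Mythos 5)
Your proposal is correct and follows essentially the same route as the paper: reduce to charts, apply the classical inverse function theorem to the reduction $\underline{\varphi}$, observe that bijectivity of $d\varphi_b$ forces invertibility of the block $H$, and then invert $\varphi^*$ by showing it is bijective on the graded pieces of the $\m_\CO$-adic (i.e.\ $(y_1,\dots,y_k)$-adic) filtration and passing to the completion. The only cosmetic differences are that you straighten $\overline{\varphi}$ to the identity and work with global sections and successive approximation, whereas the paper keeps $\underline{\varphi}$ as a diffeomorphism and runs the same induction at the level of the sheaves $\CO/\m_\CO^r$ together with the identification $\CO=\varprojlim_r\CO/\m_\CO^r$.
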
	

When  $M$ and $M'$ are both smooth manifolds, Theorem \ref{thm:invfun} is just the usual inverse function theorem for smooth manifolds. When the underlying spaces of $M'$ and $M$ are both singletons, Theorem \ref{thm:invfun} is a reformulation of 
the formal inverse function theorem (see \cite[(A4.5)]{Ha} for example).   Theorem \ref{thm:invfun} is thus a common generalization of these two classical results.

The proof of Theorem \ref{thm:invfun} will be presented in \S\,\ref{subsec:invthm}. 
For the question (Q2),  
 we will show in Corollary \ref{cor:desbijdiff} that if $d\varphi_b$ is bijective, then the local expression 
 of $\varphi$  near $b$ is modeled by  the morphism \eqref{eq:introex1} for some $n,k,r\in \BN$. 

 Now we consider the case that the differential $d\varphi_b$ may or may not be bijective. 
As we have seen before, the local structure 
of $\varphi$ near $b$ depends not only on  $d\varphi_b$, but also on  $d\underline{\varphi}_b$. In fact, 
 $\mathrm{T}_b(\underline{M'})$ is naturally a subspace of $\mathrm{T}_b(M')$ and $d\underline{\varphi}_b$ coincides with the restriction of $d\varphi_b$ on $\mathrm{T}_b(\underline{M'})$ (see \S\,\ref{subsec:diff}).
 It is an elementary fact in linear algebra that such 
  a linear map $ d\varphi_b$ (namely, a linear map from $\mathrm{T}_{b}(M')$ to $  \mathrm{T}_{\overline\varphi(b)}(M)$ that maps $\mathrm{T}_b(\underline{M}')$ into $\mathrm{T}_{\overline\varphi(b)}(\underline{M}))$ is characterized by the triple (see Remark \ref{rem:char})
\be\label{eq:rankvarphib}(\mathrm{Rank}(d\varphi_b),\mathrm{Rank}(d\underline{\varphi}_b),\mathrm{Rank}(d\varphi_b^{\mathrm{f}}))\in \BN^3. \ee Here  
 $\mathrm{Rank}(\,\cdot\,)$ denotes the rank of a linear map between finite-dimensional vector spaces, and
\[
d\varphi_b^{\mathrm{f}}: \mathrm{T}_b(M')/\mathrm{T}_b(\underline{M'})\rightarrow
\mathrm{T}_{\overline{\varphi}(b)}(M)/\mathrm{T}_{\overline{\varphi}(b)}(\underline{M})
\]
denotes the quotient map induced by $d\varphi_b$.
 Taking this point of view, we introduce the following definition.
 
 \begin{dfn}\label{de:rank}
     The triple \eqref{eq:rankvarphib} is called 
     the rank of $\varphi$ at $b$, to be denoted by  $\mathrm{Rank}_b(\varphi)$.
 \end{dfn}

As in the smooth manifolds case, we say that $\varphi$ has constant rank near $b$ if there is an open neighborhood $V'$ of $b$ in $M'$ such that $\mathrm{Rank}_{b'}(\varphi)=\mathrm{Rank}_{b}(\varphi)$ for all $b'\in V'$. Motivated by the constant rank theorem for smooth manifolds,
we also make the following definition. 
 
\begin{dfn}\label{def:star} The morphism $\varphi$ is said to be standardizable near $b$ if there exists   a chart $(U',(N')^{(k')},\vartheta')$ of $M'$ containing $b$ and a chart $(U,N^{(k)},\vartheta)$ of $M$ containing $\overline{\varphi}(b)$ such that $\overline{\varphi}(U')\subset U$ and  that the morphism  
\be\label{eq:localrepnvarphi}
 \vartheta\circ   \varphi|_{U'}\circ (\vartheta')^{-1}:     \     (N')^{(k')}\rightarrow N^{(k)}
\ee
is given by 
\begin{equation}\begin{split}\label{eq:introstrandard}
&(x_1,\dots,x_{r_1},x_{r_1+1},\dots,x_{n-r_2},x_{n-r_2+1},\dots,x_n,y_1,\dots,y_{r_3},y_{r_3+1},\dots,y_k)\\
\mapsto\ & 
(u_1,\dots,u_{r_1},\underbrace{0,\dots,0}_{n-r_1-r_2},z_1,\dots,z_{r_2},z_{r_2+1},\dots,z_{r_2+r_3},\underbrace{0,\dots,0}_{k-r_3})
\end{split}\end{equation}
for some $r_1,r_2,r_3\in \BN$,
where $(x_1,x_2,\dots,x_n,y_1,y_2\dots,y_k)$ and $(u_1,u_2,\dots,u_{n'}, z_1,$ $z_2,\dots,z_{k'})$ are respectively the standard coordinate systems of $N^{(k)}$ and $(N')^{(k')}$. 
\end{dfn}

If $\varphi$ is standardizable near $b$ as in Definition \ref{def:star}, then $\varphi$ has constant rank  $(r_1+r_2+r_3,r_1,r_3)$ near $b$. However, if $\varphi$ has constant rank near $b$, it may not be standardizable near $b$, as shown in the following example.

\begin{exampled}
Suppose that  $M'=M=(\BR^0)^{(1)}$ (and then  $\CO(M)=\CO(M')=\BC[[y]]$), 
and $\varphi$ is determined by 
\[\varphi^*\left(\sum_{i\in \BN}c_i\,y^i\right)=\sum_{i\in \BN} c_i\,y^{2i}\quad (c_i\in \BC).\] It is obvious that $\varphi$ has constant rank $(0,0,0)$.  
If $\varphi$ is standardizable, then there exist  isomorphisms $\vartheta=(\overline{\vartheta},\vartheta^*):M\rightarrow M$ and $\vartheta'=(\overline{\vartheta'},\vartheta'^*):M'\rightarrow M'$ such that the kernel of the homomorphism \[((\vartheta')^{-1})^*\circ \varphi^*\circ (\vartheta)^*:\ \C[[y]]\rightarrow \C[[y]]\]   is   $y\BC[[y]]$, which contradicts to the fact that $\varphi^*$ is injective.
\end{exampled}

Naturally,  we have the following two questions.

\begin{itemize}
\item[(Q3)] What is the local model 
of $\varphi$ when it has constant rank near $b$?
\item[(Q4)] When  is the morphism $\varphi$ standardizable near $b$? 
\end{itemize}

By applying Theorem \ref{thm:invfun}, we will provide an answer of (Q3) in Theorem \ref{thm:rankthm}. 
For the question (Q4), let $b'\in M'$ and consider the homomorphism \be\label{eq:varphi_0}\varphi^*_{b',0}:\ \wh{\m}_{\overline{\varphi}(b')}\rightarrow \wh{\m'}_{b'}
\quad (\wh{\m'}_{b'}\ \text{is the maximal ideal of}\ \wh{\CO'}_{b'}) \ee
induced by $\varphi^*_{b'}:\wh{\CO}_{\overline{\varphi}(b')}\rightarrow \wh{\CO'}_{b'}$. 
  Then there are canonical homomorphisms  \be\label{eq:Introvarphi2} \varphi^*_{b',2}: \  \wh{\m}_{\overline{\varphi}(b')}/\wh{\m}_{\overline{\varphi}(b')}^2\rightarrow \wh{\m'}_{b'}/\wh{\m'}_{b'}^{2}\quad\text{and}\quad  \varphi_{b',\mathrm{k}}^*: \ker \varphi^*_{b',0}\rightarrow \ker \varphi^*_{b',2}\ee
         such that the diagram          
        \be\label{eq:constantrankthediagram} \begin{CD} \ker \varphi^*_{b',0}@>>>\wh{\m}_{\overline{\varphi}(b')} @>\varphi^*_{b',0}>>\wh{\m'}_{b'}\\@V \varphi_{b',\mathrm{k}}^* VV @VVV @VVV\\ \ker \varphi^*_{b',2}@>>>  \wh{\m}_{\overline{\varphi}(b')}/\wh{\m}_{\overline{\varphi}(b')}^2@>\varphi^*_{b',2}>> \wh{\m'}_{b'}/\wh{\m'}_{b'}^{2} \end{CD}\ee commutes. 
     We have the following result about (Q4), whose proof will be given in \S\,\ref{subsec:rankthm}.

    \begin{thmd}\label{thm:constantrankthmmain}Let $\varphi=(\overline\varphi, \varphi^*): (M',\CO')\rightarrow (M,\CO)$ be a morphism of formal manifolds, and let $b\in M'$. 
    Then the morphism $\varphi$ is standardizable near $b$ if and only if 
 there is an open neighborhood $V'$ of $b$ in $M'$ such that for every $b'\in V'$,  $\mathrm{Rank}_{b'}(\varphi)=\mathrm{Rank}_b(\varphi)$ and  
      $\varphi^*_{b',\mathrm{k}}$  is surjective.
\end{thmd}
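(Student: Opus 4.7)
My plan is to prove each direction separately.

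\textbf{Forward direction.} Assume $\varphi$ is standardizable near $b$ via charts giving the form \eqref{eq:introstrandard}. A direct Jacobian calculation at any point $b'$ of the source chart shows $d\varphi_{b'}$ has rank $r_1+r_2+r_3$, $d\underline\varphi_{b'}$ has rank $r_1$, and $d\varphi^{\mathrm{f}}_{b'}$ has rank $r_3$, so $\mathrm{Rank}_{b'}(\varphi)=\mathrm{Rank}_b(\varphi)$ is constant. In the same coordinates, $\ker\varphi^*_{b',0}$ equals the ideal of $\wh\m_{\overline\varphi(b')}$ generated by $x_{r_1+1},\ldots,x_{n-r_2},y_{r_3+1},\ldots,y_k$, while $\ker\varphi^*_{b',2}$ is the subspace of $\wh\m_{\overline\varphi(b')}/\wh\m_{\overline\varphi(b')}^2$ spanned by the classes of these same generators. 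Since $\varphi^*_{b',\mathrm{k}}$ sends each generator to its class, it is surjective.

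\textbf{Reverse direction.} Write $(r_1+r_2+r_3,r_1,r_3)=\mathrm{Rank}_b(\varphi)$. I would construct the desired normal-form charts in three stages, at each stage applying Theorem \ref{thm:invfun} to verify that the proposed coordinate change is a local isomorphism of formal manifolds. First, $\underline\varphi$ has constant rank $r_1$ in a neighborhood of $b$, so the classical constant rank theorem for smooth manifolds yields smooth charts in which $\underline\varphi$ becomes $(u_1,\dots,u_{n'})\mapsto(u_1,\dots,u_{r_1},0,\dots,0)$; lifting to formal coordinates and then substituting $u_i$ by $\varphi^*(x_i)$ for $i\leq r_1$, we arrange $\varphi^*(x_i)=u_i$ for $i\leq r_1$ and $\varphi^*(x_i)\in\m_{\CO'}$ otherwise. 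Second, since $d\varphi_{b'}$ has rank $r_1+r_2+r_3$ and $d\varphi^{\mathrm{f}}_{b'}$ has rank $r_3$, after reindexing the target $x$'s and $y$'s the pullbacks $\varphi^*(x_{n-r_2+1}),\ldots,\varphi^*(x_n)$ and $\varphi^*(y_1),\ldots,\varphi^*(y_{r_3})$ have linearly independent formal-cotangent classes at $b$; replacing the source formal coordinates $z_1,\ldots,z_{r_2+r_3}$ by these $r_2+r_3$ pullbacks yields $\varphi^*(x_{n-r_2+j})=z_j$ and $\varphi^*(y_i)=z_{r_2+i}$.

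The third and crucial stage is to kill the residual pullbacks $\varphi^*(x_i)$ for $r_1<i\leq n-r_2$ and $\varphi^*(y_i)$ for $i>r_3$. After the previous stages, at every $b'$ in a neighborhood of $b$ these lie in $\wh{\m'}_{b'}^2$, which is where the constant-rank condition on the whole neighborhood is essential. Their classes in $\wh\m_{\overline\varphi(b')}/\wh\m_{\overline\varphi(b')}^2$ therefore lie in $\ker\varphi^*_{b',2}$, and the surjectivity of $\varphi^*_{b',\mathrm{k}}$ supplies, for each such target coordinate, a formal function agreeing with it modulo $\wh\m^2$ and pulling back to zero. Replacing the original coordinates on $M$ by these lifts gives a coordinate change that is identity modulo $\wh\m^2$, hence a local isomorphism by Theorem \ref{thm:invfun}, and places $\varphi$ in the form \eqref{eq:introstrandard}.

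\textbf{Main obstacle.} The heart of the argument is Stage 3. The surjectivity of $\varphi^*_{b',\mathrm{k}}$ is a pointwise statement at each $b'\in V'$, but the desired coordinate functions must be defined on a whole open neighborhood of $\overline\varphi(b)$ in $M$ and pull back to zero identically on a neighborhood of $b$ in $M'$. Bridging this stalk-to-sheaf gap will likely require a careful successive approximation in the complete local ring, propagated to a neighborhood using the constancy of rank. The prototypical pathology ruled out by the hypothesis is the example $y\mapsto y^2$, where $\ker\varphi^*_{b,2}$ contains the class of $y$ but no formal function pulls back to zero; our hypothesis precisely forbids this behaviour uniformly near $b$.
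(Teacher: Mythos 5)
Your forward direction is correct and matches the paper's Lemma \ref{lem:constantrankatob}, and Stages 1--2 of your reverse direction are sound: they reproduce the content of Theorem \ref{thm:rankthm}, which uses only the constant-rank hypothesis. The problem is Stage 3, which is the entire difficulty of the theorem; you have located the gap honestly but not closed it, and your sketch of how to close it would not work as stated. A preliminary slip: after Stages 1--2 the residual pullbacks $\varphi^*(x_i)$ ($r_1<i\le n-r_2$) and $\varphi^*(y_j)$ ($j>r_3$) need \emph{not} lie in $\wh{\m'}_{b'}^2$ --- constant rank only forces their linear parts to be supported on $z_1,\dots,z_{r_2+r_3}$ (e.g.\ $\varphi^*(x_{r_1+1})=z_1$ is consistent with constant rank), so their classes need not lie in $\ker\varphi^*_{b',2}$ and the final coordinate change is not the identity modulo $\wh\m^2$. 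This is repairable (the change still has bijective differential), but it signals that the surjectivity of $\varphi^*_{b',\mathrm{k}}$ cannot be applied to the coordinates $x_i$, $y_j$ directly.

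The decisive gap is the stalk-to-sheaf passage itself. Surjectivity of $\varphi^*_{b',\mathrm{k}}$ supplies only an element of the formal stalk $\wh{\m}_{\overline\varphi(b')}$ killed by $\varphi^*_{b'}$; since $\CO_a\rightarrow\wh{\CO}_a$ has a large kernel (flat germs), such an element neither determines a formal function on a neighborhood nor guarantees that any Borel lift of it pulls back to zero as a formal function near $b$. ``Successive approximation in the complete local ring'' at the single point $b$ cannot bridge this, because the formal stalk at $b$ retains no information about nearby points; and ``constancy of rank'' alone does not propagate anything, as the $y\mapsto y^2$ example (constant rank, not standardizable) already shows. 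What is actually needed --- and what the paper proves in Lemma \ref{lem:ghinpf2} --- is to use the surjectivity of $\varphi^*_{b',\mathrm{k}}$ at \emph{every} $b'$ in a neighborhood, via the rank count of Lemma \ref{lem:stalkofconstantrank} applied to each $\varphi_{b'}$, to derive identities on the Taylor coefficients of the residual pullbacks: the coefficients $g_{iJ},h_{jJ}$ vanish unless $J$ is supported on $z_1,\dots,z_{r_2+r_3}$, and the surviving coefficients are independent of $u_{r_1+1},\dots,u_{n'}$. This shows the residuals factor through the slice projection $p'$, whereupon Borel's lemma (Lemma \ref{lem:zetarsur}) produces genuine formal functions $g_i',h_j'$ on the target slice with $\varphi^*(p^*(g_i'))=g_i$ and $\varphi^*(p^*(h_j'))=h_j$, and subtracting these from the target coordinates finishes the proof. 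Without this mechanism (or an equivalent one), the ``if'' direction is not established.
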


In \S\,\ref{subsec:immvssub}, we will consider the special case that $\varphi$ is an immersion or a submersion at $b$, namely, $d\varphi_b$ is injective or surjective. 
In such cases, $\varphi$ may or may not have constant rank near $b$ (see Examples \ref{ex:immnotconstant} and \ref{ex:subnotconstant}). On the other hand, if  $\varphi$  is in addition assumed to have constant rank near $b$, then it is standardizable near $b$ (see Propositions  \ref{prop:localimmersion} and 
\ref{prop:dessubmer}). 
A sufficient condition for $\varphi$ to have  constant rank near $b$ is that it is regular at $b$ in the sense that
\[
\mathrm{Rank}(d\varphi_b)=\mathrm{Rank}(d\underline{\varphi}_b)+\mathrm{Rank}(d\varphi_b^{\mathrm{f}}).\] See  Lemma \ref{lem:regularmor} for details.

We remark that the local behavior of a regular morphism of formal manifolds is quite similar to that of a morphism of supermanifolds.
For example, there are  supergeometric versions of the inverse function,  immersion and submersion theorems for morphisms of supermanifolds (see \cite[Propositions 5.2.1, 5.2.4, and 5.2.5]{CCF}), and we 
have analog results for regular morphisms of formal manifolds (see Corollaries \ref{cor:desbijdiff}, \ref{cor:desregsumer}, and \ref{cor:desregimmer}). 
On the other hand, a constant rank regular morphism of formal manifolds may not be standardizable, and the same phenomenon appears in the supermanifolds case. 

\begin{remarkd} The obvious analog of Theorem \ref{thm:constantrankthmmain} for supermanifolds, which characterizes the standardizable morphisms between supermanifolds, also holds true. In \cite[\S\,5.2]{CCF}, a notion of constant rank morphisms between  supermanifolds is defined and the corresponding constant rank theorem (see \cite[Proposition 5.2.6]{CCF}) is proved, which gives another characterization of the standardizable morphisms between supermanifolds. We remark that, unlike to the smooth manifolds case,  constant rank morphisms in \cite{CCF} are not defined by  the ranks of the differential maps.  
 Additionally, the condition to character standardizable morphism in Theorem \ref{thm:constantrankthmmain} is given point-wisely, while that of \cite[Proposition 5.2.6]{CCF} is given locally but not point-wisely.
\end{remarkd}

\subsection{Formal submanifolds}
In Section \ref{sec:submanifold}, we aim to develop the theory of formal submanifolds. 
As usual, $\varphi$ is called an immersion if $d\varphi_{b'}$ is injective for every $b'\in M'$. Additionally,  $\varphi$ is said to be injective (resp.\,closed; resp.\,embedded)  if  $\overline\varphi$ is injective (resp.\,closed; resp.\,a topological embedding).
 For examples,  the morphisms  $\underline{M}\rightarrow M$ and $M_a\rightarrow M$ ($a\in M$) are both closed embedded immersions (see Lemma \ref{lem:taninj}). We have the following characterizations of immersions and closed embedded immersions of formal manifolds, whose proof will be given in \S\,\ref{subsec:charcloembd}.

\begin{thmd}\label{thm:charclosedsub}
 Let
  $\varphi=(\overline\varphi,\varphi^*): (M',\CO')\rightarrow (M,\CO)$ be a morphism of formal manifolds.

\noindent(a) The morphism $\varphi$ is an  immersion if and only if the $\C$-algebra sheaf homomorphism 
\[\varphi^*:\ \overline{\varphi}^{-1}\CO\rightarrow \CO'\] 
is surjective.

\noindent(b) The morphism $\varphi$ is a closed embedded immersion if and only if the $\C$-algebra homomorphism 
\[\varphi^*:\ \CO(M)\rightarrow \CO'(M')\]
is surjective. 
\end{thmd}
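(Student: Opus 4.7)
My plan is to prove (a) first, then deduce (b) from (a) together with a standard partition-of-unity gluing argument.

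\textbf{Proof of (a).} The direction $(\Leftarrow)$ is immediate: surjectivity of $\varphi^*$ as a sheaf map is equivalent to surjectivity of each stalk map $\varphi^*_b$; completing in the maximal ideals and reducing modulo $\wh{\m}_{\overline\varphi(b)}^{2}$ and $\wh{\m'}_b^{2}$ produces a surjection of cotangent spaces whose transpose is $d\varphi_b$, proving injectivity.

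For $(\Rightarrow)$, the main obstacle is that an immersion at $b$ need not have constant rank near $b$, so $\varphi$ itself may fail to be standardizable and Proposition \ref{prop:localimmersion} cannot be invoked directly. To circumvent this I would fix $b\in M'$, work in local charts, and first apply the classical smooth immersion theorem to $\underline\varphi$ in order to adapt the $x$-coordinates so that $\overline\varphi(u)=(u,0)$. Because every $\varphi^*(y_p)$ lies in $\m_{\CO'}$, the Jacobian block $\partial_u\varphi^*(y_p)(b)$ vanishes, so the injectivity of $d\varphi_b$ is equivalent to the ``formal block''
\[
M_1(b)\;:=\;\begin{pmatrix}\bigl(\partial_{z_l}\varphi^*(x_i)(b)\bigr)_{i>n',\,l}\\[2pt]\bigl(\partial_{z_l}\varphi^*(y_p)(b)\bigr)_{p,\,l}\end{pmatrix}
\]
having full column rank $k'$. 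Choose $k'$ linearly independent rows, indexed by $S_x\subset\{n'+1,\dots,n\}$ and $S_y\subset\{1,\dots,k\}$ with $|S_x|+|S_y|=k'$, and form the coordinate projection $\pi:M\to M'':=(\R^{n'+|S_x|})^{(|S_y|)}$ retaining only $x_{\le n'}$, $(x_i)_{i\in S_x}$ and $(y_p)_{p\in S_y}$. Then the composite $\psi:=\pi\circ\varphi:M'\to M''$ has bijective differential at $b$ by construction, so Corollary \ref{cor:desbijdiff} identifies $\psi$ locally with the Taylor-expansion morphism of Example \ref{ex:intro1}. A direct application of Borel's lemma to that model morphism shows its pullback is surjective on stalks, and therefore $\psi^*_b$ is surjective. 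For any $g\in\CO'_b$, writing $g=\psi^*(h)=\varphi^*(\pi^*(h))$ exhibits $\pi^*(h)$ as a preimage under $\varphi^*_b$.

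\textbf{Proof of (b).} For $(\Leftarrow)$, given a germ of $f'$ at $b$, I would multiply a local representative by a smooth bump function equal to $1$ near $b$, extend by zero to $\CO'(M')$, and lift to $\CO(M)$ via the hypothesis; this yields stalk-level surjectivity of $\varphi^*$, so by (a) the morphism $\varphi$ is an immersion. Moreover the induced surjection $\underline\varphi^*:\RC^\infty(M)\to\RC^\infty(M')$ on reductions then forces $\overline\varphi$ to be a closed topological embedding by the classical characterization of closed embedded submanifolds of smooth manifolds. For $(\Rightarrow)$, I would use (a) to choose, for each $a\in\overline\varphi(M')$, an open $W_a\ni a$ in $M$ and $h_a\in\CO(W_a)$ with $\varphi^*(h_a)=f'|_{\overline\varphi^{-1}(W_a)}$; on $W_\infty:=M\setminus\overline\varphi(M')$ (open by closedness of the image) set $h_\infty=0$; and glue via a smooth partition of unity $\{\rho_\alpha\}$ on $M$ subordinate to $\{W_a\}\cup\{W_\infty\}$. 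The identity $\varphi^*(\sum_\alpha\rho_\alpha h_\alpha)=f'$ is then immediate since $\sum_{a\in\overline\varphi(M')}\rho_a\circ\overline\varphi=1$ on $M'$.
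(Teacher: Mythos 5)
Your proposal is essentially correct, but it reverses the paper's logical order and uses a different mechanism for the key surjectivity step. The paper proves (b) first: it uses the smooth slice theorem to put $\underline\varphi$ in normal form on charts, factors $\varphi$ through the slice morphism $\zeta_{\bm r}$ (Lemma \ref{lem:constructphi}), and, in the case where the reduction is the identity, selects an invertible square submatrix of the Jacobian by restricting $\varphi^*$ to a subalgebra $\RC^\infty(N)[[y_{\nu_1},\dots,y_{\nu_{k'}}]]$ and applying the inverse function theorem (Lemma \ref{lem:submanifoldpre1}); part (a) is then deduced from (b) via Proposition \ref{prpd:immersion}. You instead prove (a) directly by post-composing $\varphi$ with a coordinate projection $\pi$ chosen so that $d(\pi\circ\varphi)_b$ is bijective, then quoting Corollary \ref{cor:desbijdiff} and Borel's lemma (Lemma \ref{lem:zetarsur}). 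Your row-selection-plus-projection is the dual of the paper's column/subalgebra trick, and it buys you a proof of (a) that bypasses (b) and the slice machinery entirely; the linear algebra (after normalizing $\underline\varphi$, injectivity of $d\varphi_b$ is full column rank of the block you call $M_1(b)$) checks out, and the dimensions match what Corollary \ref{cor:desbijdiff} requires.

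One step is stated too strongly and should be repaired. In the forward direction of (b) you claim that part (a) supplies $h_a\in\CO(W_a)$ with $\varphi^*(h_a)=f'|_{\overline\varphi^{-1}(W_a)}$. Part (a) only gives stalk surjectivity: the lift agrees with $f'$ on some neighborhood $V$ of a point $b$ of the fiber, whereas $\overline\varphi^{-1}(W_a)$ may be much larger (for a non-embedded injective immersion, e.g.\ a dense line in a torus, it never shrinks into $V$, and the global $\varphi^*$ is indeed not surjective there). You must invoke the hypothesis that $\overline\varphi$ is an injective closed topological embedding to (i) reduce the fiber to a single point and (ii) shrink $W_a$ until $\overline\varphi^{-1}(W_a)\subset V$; this is a one-line fix, but it is precisely where the embedding hypothesis enters, so it cannot be attributed to (a) alone. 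A smaller gloss: in the converse direction of (b), injectivity of $\overline\varphi$ should be recorded (it follows from surjectivity of $\varphi^*$ on global sections by separating points of a fiber), after which your appeal to the classical characterization (the paper cites Warner, Ch.~1, Ex.~11) goes through.
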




Let $\mathrm{Imm}(M)$ denote the family of all pairs $(M_0,\varphi_0)$, where $M_0$ is a formal manifold and $\varphi_0$ is an injective immersion from $M_0$ to $M$. Two pairs  $(M_1,\varphi_1)$ and $(M_2,\varphi_2)$ in $\mathrm{Imm}(M)$ are said to be equivalent if there exists an isomorphism 
 $\psi:M_1\rightarrow M_2$ of formal manifolds such that $\varphi_2\circ \psi=\varphi_1$. As in the case of smooth manifolds, an immersed formal submanifold of $M$ defined below can be understood as an equivalence class in $\mathrm{Imm}(M)$.

\begin{dfn} An immersed formal submanifold of $(M,\CO)$ is a pair  $(S,(i^{-1}\CO)/\CI)$ consisting of
\begin{itemize} 
\item a topological space $S$   such that $S$ is a subset of $M$ and the inclusion $i:S\rightarrow M$ is continuous; and \item a quotient sheaf $(i^{-1}\CO)/\CI$ over  $S$, where $\CI$ is an ideal  of the sheaf $i^{-1}\CO$ of $\BC$-algebras over $S$,  
\end{itemize} such that, as a locally ringed space, $(S,(i^{-1}\CO)/\CI)$ is a formal manifold. 
\end{dfn}

For each immersed formal submanifold $(S,(i^{-1}\CO)/\CI)$ of $(M,\CO)$, we have  a canonical morphism \be\label{eq:mapiota} \iota=(\overline\iota,\iota^*):\ (S,(i^{-1}\CO)/\CI)\rightarrow (M,\CO)\ee of formal manifolds, where $\overline{\iota}=i$ is the inclusion, and for every open subset $U$ of $M$, $\iota^*_{U}$ is the composition map \be \label{eq:iotaU}\iota^*_{U}:\ \CO(U)\rightarrow (i^{-1}\CO)(i^{-1}(U))\rightarrow ((i^{-1}\CO)/\CI)(i^{-1}(U)).\ee 
Then $(S,\iota)$ is an element in $\mathrm{Imm}(M)$.
Conversely, each equivalence class in $\mathrm{Imm}(M)$  has a unique representative of the form $(S,\iota)$ (see Proposition \ref{prop:dessub}).

By abuse of notation, we will often not distinguish an immersed formal submanifold $(S,(i^{-1}\CO)/\CI)$  of $(M,\CO)$  
with its underlying topological space $S$, and call $\CI$ the defining ideal of it.

 An immersed formal submanifold $S$ of $M$ is called an embedded formal submanifold if $S$ is equipped with the subspace topology of $M$. 
In the category of smooth manifolds, we also define immersed smooth submanifolds and embedded smooth submanifolds similarly. 

Given an immersed formal submanifold $S$ of $M$, a fundamental question is in which case it has the universal property that for every morphism $\psi=(\overline{\psi},\psi^*): M'\rightarrow M$  with $\overline{\psi}(M')\subset S$,  there is a unique morphism $\phi: M'\rightarrow S$ such that $\psi=\iota\circ \phi$?
In the category of smooth manifolds, an immersed smooth submanifold that admits such a universal property is usually called initial (see \cite[Page 114]{L}). It is well-known that each embedded smooth submanifold is initial (see \cite[Corollary 5.30]{L}).

If $S$ is an initial formal submanifold of $M$ in the sense that it satisfies the universal property mentioned above, then $\underline{S}$ is an initial smooth submanifold of $\underline{M}$. On the other hand, the following theorem shows that for each initial smooth submanifold $P$ of $\underline{M}$, there is a unique initial formal submanifold of $M$ whose reduction is $P$.

\begin{thmd} \label{thm:universalformalsub}
Let $P$ be an immersed smooth submanifold of $\underline{M}$. Then there is a unique immersed formal submanifold $\wt P$ of $M$ such that  $\wt P=P$ as topological spaces and satisfies the following universal property:
 for every morphism $\psi=(\overline{\psi},\psi^*): M' \rightarrow M$ of formal manifolds, if  
$\overline{\psi}(M')\subset P$ and the induced map $\overline{\psi}: M'\rightarrow P$  is continuous,  
then there exists a unique  morphism $\phi=(\overline\phi,\phi^*): M' \rightarrow \wt P$  such that  the diagram 
\[\xymatrix{ & \wt P\ar[d]^{\iota}\\
  M'\ar[r]^{\psi}\ar[ur]^{\phi}& M}
  \]
  commutes.
\end{thmd}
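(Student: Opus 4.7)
The plan is to construct $\wt P$ locally via slice charts of $P$ in $\underline{M}$, patch these into a global immersed formal submanifold structure on $P$, verify the universal property, and deduce uniqueness of $\wt P$ formally. Fix $a\in P$. Since the inclusion $i\colon P\to\underline{M}$ is a smooth immersion, the classical immersion theorem produces an open neighborhood $V$ of $a$ in $P$ (with its intrinsic topology) and a chart $(U,N^{(k)},\vartheta)$ of $M$ containing $i(a)$, together with a product decomposition $N=N_0\times N_1\subset \BR^{n-r}\times\BR^r$ with $0\in N_1$, such that $\overline\vartheta\circ i$ identifies $V$ homeomorphically with $N_0\times\{0\}$. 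I equip $V$ with the formal manifold structure of $(N_0)^{(k+r)}$ and define the local morphism $\iota_V\colon V\to U$ exactly as in Example~\ref{ex:intro1}, sending the standard coordinate system $(x_1,\dots,x_{n-r},x_{n-r+1},\dots,x_n,y_1,\dots,y_k)$ of $N^{(k)}$ to $(u_1,\dots,u_{n-r},z_1,\dots,z_r,y_1,\dots,y_k)$ in $(N_0)^{(k+r)}$, where $z_1,\dots,z_r$ are the new formal variables. By construction $\iota_V$ is an injective immersion.

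The main obstacle is the patching step. Given two such local constructions $(V,\iota_V)$ and $(V',\iota_{V'})$ near $a$, I claim that on $V\cap V'$ the identity map of the underlying topological space lifts uniquely to an isomorphism of the two formal manifold structures that is compatible with $\iota_V$ and $\iota_{V'}$. The topological map is forced, and a direct computation in the respective slice coordinates shows that the induced differential at every point is bijective: the reductions of both structures yield the same chart of $P$, while the images of the transverse formal variables are pinned down by the required compatibility with the $\iota_V$'s. Theorem~\ref{thm:invfun} then upgrades this lift to an isomorphism of formal manifolds, and the cocycle condition on triple overlaps is automatic from uniqueness. The local structures thus glue to a formal manifold $\wt P$ with underlying topological space $P$, and the morphisms $\iota_V$ assemble into an injective immersion $\iota=(\overline\iota,\iota^*)\colon\wt P\to M$ with $\overline\iota=i$. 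The defining ideal $\CI$ of $\wt P$ has, at each $a\in P$, stalk equal to the kernel of the Taylor-expansion map $\CO_a\to \CO_{\wt P,a}$ of Example~\ref{ex:intro1}.

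For the universal property, let $\psi\colon M'\to M$ be a morphism with $\overline\psi(M')\subset P$ such that $\overline\psi\colon M'\to P$ is continuous. Set $\overline\phi:=\overline\psi$; the definition of $\phi^*$ is local, so I reduce to a slice chart near $a:=\overline\psi(b)$ in which $\wt P=(N_0)^{(k+r)}$, $M=N^{(k)}$, and $\iota$ is the morphism of Example~\ref{ex:intro1}. The hypothesis $\overline\psi(M')\subset P$ says precisely that $\psi^*(x_{n-r+i})$ has vanishing reduction for $i=1,\dots,r$, hence lies in $\m_{\CO'}$. By Proposition~\ref{prop:genebij}, the tuple
\[
(\psi^*(x_1),\dots,\psi^*(x_{n-r}),\psi^*(x_{n-r+1}),\dots,\psi^*(x_n),\psi^*(y_1),\dots,\psi^*(y_k))
\]
defines a unique morphism $\phi\colon M'\to (N_0)^{(k+r)}$ with $\iota\circ\phi=\psi$, and the local pieces glue to a global $\phi\colon M'\to\wt P$ by the same uniqueness.

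Finally, the uniqueness of $\wt P$ is a formal consequence of the universal property: if $\wt P_1$ and $\wt P_2$ are two immersed formal submanifolds of $M$ with underlying space $P$ both satisfying the stated property, then applying each universal property to the inclusion morphism of the other produces mutually inverse morphisms compatible with the respective $\iota$'s, which identifies the two defining ideals as subsheaves of $i^{-1}\CO$ and so gives equality as immersed formal submanifolds of $M$.
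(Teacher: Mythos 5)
Your proposal is correct, but it constructs $\wt P$ by a genuinely different route than the paper. You build $\wt P$ by gluing local models: each slice chart of $P$ in $\underline{M}$ gives a local formal manifold $(N_0)^{(k+r)}$ mapping to $N^{(k)}$ as in Example \ref{ex:intro1}, and you patch these via transition isomorphisms whose existence follows from the slice-lifting argument (your candidate images of the transverse variables lie in $\m_\CO$, so Proposition \ref{prop:genebij} produces the lift, and a dimension count $(n-r)+(k+r)=n+k$ shows $d\iota_V$ is bijective at every point, whence Theorem \ref{thm:invfun} upgrades the lift to an isomorphism); uniqueness of the lift, hence the cocycle condition, comes from Proposition \ref{prop:sub=mono}. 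The paper instead defines the structure sheaf of $\wt P$ globally and chart-independently, as $(i^{-1}\CO)/\wt\CI$ where $\wt\CI$ consists of the sections of $i^{-1}\CO$ annihilated, after applying any differential operator $\CO\to\underline\CO$, by restriction to $P$ (Lemma \ref{lem:Jideal}), and only then checks in a slice chart that this ideal agrees with the slice ideal $\CI_{k+r,r}$ (Lemma \ref{lem:prethmunisub}). The paper's route avoids the gluing and cocycle bookkeeping entirely and makes $\wt P$ manifestly an immersed formal submanifold in the sense of the definition (a quotient of $i^{-1}\CO$ by an ideal sheaf), at the cost of the differential-operator machinery; your route avoids that machinery but must invoke Proposition \ref{prop:dessub} at the end to normalize the glued object into the canonical form $(P,(i^{-1}\CO)/\CI)$. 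The verification of the universal property is essentially identical in both arguments (it is the paper's Lemma \ref{lem:constructphi}, which is the same application of Proposition \ref{prop:genebij} you make, followed by gluing via the monomorphism property), and your categorical derivation of uniqueness of $\wt P$ spells out what the paper compresses into a citation of Proposition \ref{prop:dessub}.
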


The proof of Theorem \ref{thm:universalformalsub} is presented in \S\,\ref{subsec:proofthmuniformalsub}, and we will also give some consequences of it in \S\,\ref{subsection:app}. 

For every $a\in M$, denote by 
\be \label{eq:varsigma}\varsigma_a=(\overline{\varsigma_a},\varsigma_a^*):\ \Spec(\BC)\rightarrow M \ee the unique morphism  that $\overline{\varsigma_a}$ maps the singleton in $\Spec(\BC)$ to $a$.
In the case of smooth manifolds,  many smooth submanifolds are presented as level sets of constant-rank smooth maps (see \cite[Theorem 5.12]{L}). 
We have the following generalization whose proof will be given in \S\,\ref{subsec:levelsets}. 

\begin{thmd}\label{thm:levelsets} Let $\varphi=(\overline\varphi, \varphi^*): (M',\CO')\rightarrow (M,\CO)$ be a morphism of formal manifolds, and let $a\in M$. Assume that the morphism $\varphi$ is standardizable near $b$ for every $b\in \overline\varphi^{-1}(a)$. Then there exists a Cartesian diagram 
\[\begin{CD}\varphi^{-1}(a) @>>>\Spec (\BC)\\ @V\iota VV @VV\varsigma_a V \\ M'@>\varphi>> M \end{CD}\] in the category of formal manifolds.
Moreover, $\iota$ is a closed embedded immersion. 
\end{thmd}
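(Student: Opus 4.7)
The plan is to construct $\varphi^{-1}(a)$ locally from the standardizable charts at each $b\in \overline\varphi^{-1}(a)$, glue the local models into an embedded formal submanifold of $M'$, and then verify the Cartesian universal property. Since $\overline\varphi$ is continuous and $\{a\}\subset M$ is closed, $S:=\overline\varphi^{-1}(a)$ is a closed subset of $M'$, which I equip with the subspace topology. For each $b\in S$, I choose standardizable charts $(U',(N')^{(k')},\vartheta')$ and $(U,N^{(k)},\vartheta)$ with $b\in U'$, $\overline\varphi(U')\subset U$ and $a\in U$, translated so that $b\mapsto 0\in N'$ and $a\mapsto 0\in N$. In these coordinates the explicit form \eqref{eq:introstrandard} shows that $S\cap U'$ corresponds to $N''\cap N'$, where $N'':=\{u_1=\cdots=u_{r_1}=0\}$ is a smooth submanifold of $\BR^{n'}$. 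The natural local candidate for the fibre is then $(N'')^{(k'-r_2-r_3)}$, with the restriction of $\iota$ being the standard closed embedding that sends the coordinates $u_1,\dots,u_{r_1}$ and the formal variables $z_1,\dots,z_{r_2+r_3}$ to $0$.

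To glue these local pieces intrinsically, I would introduce the ideal sheaf $\CI\subset\CO'$ generated on $\overline\varphi^{-1}(U_0)$ (for open $U_0\subset M$ with $a\in U_0$) by $\{\varphi^*_{U_0}f\mid f\in\CO(U_0),\ f_a\in\m_a\}$. A direct computation in the standardizable chart, using a Hadamard-type decomposition $f=\sum x_i g_i+\sum y_l h_l$ of germs in $\m_a$, identifies $\CI$ near $b\in S$ with the ideal generated by $u_1,\dots,u_{r_1},z_1,\dots,z_{r_2+r_3}$, and shows $\CI_{b'}=\CO'_{b'}$ for $b'\notin S$ sufficiently close to $S$. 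Consequently the locally ringed space with underlying space $S$ and structure sheaf $(\iota^{-1}\CO')/(\iota^{-1}\CI)$ is locally isomorphic to $(N'')^{(k'-r_2-r_3)}$ and is therefore a formal manifold, which I define to be $\varphi^{-1}(a)$. The canonical morphism $\iota$ in \eqref{eq:mapiota} is closed and embedded by construction, and since $\iota^*$ is the canonical quotient it is surjective as a sheaf homomorphism, so Theorem \ref{thm:charclosedsub}(a) yields that $\iota$ is a closed embedded immersion.

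For the Cartesian property, let $\psi=(\overline\psi,\psi^*):M''\to M'$ be any morphism of formal manifolds satisfying $\varphi\circ\psi=\varsigma_a\circ\pi_{M''}$, where $\pi_{M''}:M''\to\Spec(\BC)$ is the structure morphism of $M''$. Tracing underlying maps gives $\overline\psi(M'')\subset S$, and for every open $U_0\subset M$ containing $a$ and every $f\in\CO(U_0)$ with $f_a\in\m_a$,
\[
\psi^*(\varphi^*f)=(\varphi\circ\psi)^*f=\pi_{M''}^*\circ\varsigma_a^*(f)=0.
\]
Hence $\psi^*$ annihilates the generators of $\CI$, so it factors uniquely through $\CO'/\CI$ and produces the required unique morphism $\phi:M''\to\varphi^{-1}(a)$ with $\iota\circ\phi=\psi$.

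I expect the main obstacle to be the second step: verifying that the intrinsically defined ideal $\CI$ coincides, on each standardizable chart, with the ideal generated by $u_1,\dots,u_{r_1}$ and $z_1,\dots,z_{r_2+r_3}$. The subtle point is that $\varphi^*(\m_a)$ involves both smooth germs (producing the $u_i$) and formal-variable germs (producing the $z_j$), and one must argue, via a Hadamard-type decomposition valid for the mixed smooth/formal algebras $\CO(N^{(k)})=\RC^\infty(N)[[y_1,\dots,y_k]]$, that the ideal these pullbacks generate in $\CO'$ itself — not merely in the stalks or in their formal completions — already recovers the expected local defining ideal of the fibre. Once this identification is established, compatibility across overlapping standardizable charts and the behaviour $\CI=\CO'$ off $S$ both follow automatically.
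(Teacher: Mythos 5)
Your proposal is correct, but it reaches Theorem \ref{thm:levelsets} by a genuinely different route from the paper. The paper works globally and algebraically: it forms the ideal $I'$ of the global section algebra $\CO'(M')$ generated by $\varphi^*\{f\in \CO(M)\mid f(a)=0\}$, takes the formal spectrum $\mathrm{Specf}(\CO'(M')/I')$ with its canonical morphism to $M'=\mathrm{Specf}(\CO'(M'))$, identifies its underlying space with $\overline\varphi^{-1}(a)$ (Lemma \ref{lem:=-1a}), and proves the Cartesian property once and for all in the larger category of good locally ringed spaces over $\Spec(\BC)$ (Lemmas \ref{lem:monoofringedspace}--\ref{lem:pullback}); this part uses no standardizability, which enters only at the end to show $I'=\ker\zeta^*$ and hence that the fibre is a formal manifold (Lemma \ref{lem:levlinstand}). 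You instead introduce an ideal \emph{sheaf} $\CI$ on $M'$, build the fibre by gluing the local slice models, and verify the universal property directly by factoring $\psi^*$ through the quotient sheaf. The core computation is the same in both proofs: in a standardizable chart centred at $b$ and $a$, formula \eqref{eq:zetafstandard} gives $\varphi^*(f)\equiv f_0\circ p_{r_1} \pmod{(z_1,\dots,z_{r_2+r_3})}$ for $f=\sum_J f_Jy^J$ with $f_0(a)=0$, and a Hadamard decomposition $f_0\circ p_{r_1}=\sum_{i\le r_1}u_i g_i$ (valid because the chart can be taken star-shaped) places $\varphi^*(\m_a)$ inside $(u_1,\dots,u_{r_1},z_1,\dots,z_{r_2+r_3})$; so the ``main obstacle'' you flag is exactly the content of Lemma \ref{lem:levlinstand}, and it does go through. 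What the paper's route buys is that all gluing is absorbed into the $\mathrm{Specf}$ functor and the pullback property holds against arbitrary good locally ringed spaces; what your route buys is a chart-level argument parallel to the classical level-set theorem, at the cost of checking that the locally defined quotient sheaf glues to a formal manifold structure and that the local factorizations of $\psi$ patch --- the latter follows from uniqueness, i.e.\ from $\iota$ being a monomorphism (Proposition \ref{prop:sub=mono}), which you should cite explicitly. One small correction: surjectivity of $\iota^*$ as a sheaf homomorphism plus Theorem \ref{thm:charclosedsub}(a) only yields that $\iota$ is an immersion; the ``closed embedded'' part comes from the separate topological facts that $S=\overline\varphi^{-1}(a)$ is closed in $M'$ and carries the subspace topology (or, alternatively, from Theorem \ref{thm:charclosedsub}(b) after checking surjectivity on global sections, which is how the paper argues in Lemma \ref{lem:lastlem}).
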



\section{Tangent spaces, differentials, and inverse function theorem}\label{sec:inv}
In this section, we introduce and study the notion of tangent spaces of a formal manifold, as well as the notion of differentials of a morphism. 
We also present a proof of Theorem \ref{thm:invfun}.

 \subsection{Tangent spaces}\label{subsec:tangentspace}
Let $a\in M$. Form the quotient maps
\begin{eqnarray*}
\label{eq:defdeltax}
\delta_a:\ \mathcal{O}_a\longrightarrow \mathcal{O}_a/\m_a=\C 
\quad\text{and}\quad \delta_a:\  \widehat{\mathcal{O}}_a\longrightarrow \widehat{\mathcal{O}}_a/\widehat{\m}_a=\C.\end{eqnarray*}
For every $f$ in $\CO_a$ or $\wh{\CO}_a$, write \[f(a):=\delta_a(f)\in \BC.\]
Additionally, for every formal function $g$ on $M$,
 we define the value  \[g(a)\in \BC\] of $g$ at $a$ to be
the image of $g$ under the following composition of natural maps:
\be\label{eq:Ev_a2} \mathrm{Ev}_a: \ \mathcal{O}(M)\longrightarrow \mathcal{O}_a\stackrel{\delta_a}{\longrightarrow} \C.\ee
 
As mentioned in the Introduction, we make the following definition. 

\begin{dfn}\label{def:tangent}
Let $a\in M$.
A tangent vector of $M$ at $a$
is a linear functional $\eta: \CO_a\rightarrow \BC$ such that
\[
\la \eta, fg\ra=f(a)\cdot \la \eta, g\ra+ \la \eta, f\ra \cdot g(a)\quad \textrm{for all }f,g\in \CO_a.
\]   
\end{dfn}

Recall that  $\mathrm{T}_a(M)$  denotes the tangent space of $M$ at $a$, which consists of all tangent vectors of $M$ at $a$. When $M$ is a smooth manifold,  $\mathrm T_a(M)$ is the complexification of the usual tangent space.

\begin{lemd}\label{lem:dimtan} The tangent space  $\mathrm{T}_a(M)$ is naturally  isomorphic to
the dual space $(\m_a/\m_a^2)^*$,
and 
\be \label{eq:dimoftangentspace}\dim \mathrm T_a (M)=\dim_a M+\deg_a M.\ee 
\end{lemd}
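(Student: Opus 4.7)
The plan is to adapt the standard identification of tangent vectors with the dual of the cotangent space, and then read off the dimension from the explicit structure of the formal stalk recalled before the lemma.

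First I would establish the isomorphism $\mathrm{T}_a(M)\cong(\m_a/\m_a^2)^*$. Applying the Leibniz rule of Definition \ref{def:tangent} to $1=1\cdot 1$ gives $\la \eta,1\ra=2\la \eta,1\ra$, so $\eta$ vanishes on constants; applying it to $fg$ with $f,g\in\m_a$ gives $\la \eta,fg\ra=0$, so $\eta$ vanishes on $\m_a^2$. Combined with the $\BC$-vector space decomposition $\CO_a=\BC\oplus\m_a$, the restriction $\eta\mapsto\eta|_{\m_a}$ then descends to a linear injection $\Phi:\mathrm{T}_a(M)\to(\m_a/\m_a^2)^*$. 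For surjectivity, given $\xi\in(\m_a/\m_a^2)^*$ I would define $\la \eta_\xi,c+f\ra:=\xi(f+\m_a^2)$ for $c\in\BC$, $f\in\m_a$, and verify Leibniz by expanding $(c_1+f_1)(c_2+f_2)$: the cross terms $c_if_j$ reproduce the two summands on the right-hand side, while $f_1f_2\in\m_a^2$ is annihilated by $\xi$.

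Next I would compute $\dim_\BC\m_a/\m_a^2$. The stated isomorphism $\wh\CO_a\cong\BC[[x_1,\dots,x_n,y_1,\dots,y_k]]$, with $n=\dim_a M$ and $k=\deg_a M$, shows that $\wh\m_a/\wh\m_a^2$ has basis $\{x_1,\dots,x_n,y_1,\dots,y_k\}$ and hence dimension $n+k$. Since $\wh\CO_a=\varprojlim_j\CO_a/\m_a^j$ is the $\m_a$-adic completion, the canonical map $\CO_a\to\wh\CO_a$ induces isomorphisms $\CO_a/\m_a^j\to\wh\CO_a/\wh\m_a^j$ for every $j\in\BN$; taking $j=2$ and comparing with $j=1$ yields $\m_a/\m_a^2\cong\wh\m_a/\wh\m_a^2$, and the dimension formula \eqref{eq:dimoftangentspace} follows.

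The main obstacle, such as it is, lies in the last identification $\m_a/\m_a^2\cong\wh\m_a/\wh\m_a^2$, i.e.\ that the low filtration quotients survive completion. If one prefers a chart-based verification that does not appeal to any general completion fact, I would instead fix a chart $(U,N^{(k)},\vartheta)$ containing $a$, identify $\CO_a$ with $\RC^\infty(N)_{\overline\vartheta(a)}[[y_1,\dots,y_k]]$, and then Taylor-expand an arbitrary $f\in\m_a$ in the $x_i$ coordinates while tracking the grading in the formal variables $y_j$; this directly exhibits $\{[x_i-x_i(a)],[y_j]\}$ as a basis of $\m_a/\m_a^2$ and gives the count $n+k$ by hand.
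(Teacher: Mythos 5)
Your proof is correct and follows essentially the same route as the paper: the paper obtains the isomorphism $\mathrm{T}_a(M)\cong(\m_a/\m_a^2)^*$ by exactly your restriction-to-$\m_a$ argument (leaving the verification to the reader), and for the dimension it simply cites \cite[Lemma 2.5]{CSW1}, which is in substance your chart-based computation. One caveat on your fallback argument: the stalk $\CO_a$ is \emph{not} literally $\RC^\infty(N)_{\overline{\vartheta}(a)}[[y_1,\dots,y_k]]$ --- the direct limit over shrinking neighborhoods does not commute with the infinite product over multi-indices $J$, so there exist nonzero germs all of whose coefficient functions have zero germ at $a$. Your Taylor-expansion argument survives this, because it only applies Hadamard's lemma to a representative $\sum_J f_J y^J$ over a fixed neighborhood and uses that there are finitely many $y_j$ (so the tail $\sum_{|J|\ge 2}f_Jy^J$ is a \emph{finite} sum of products of elements of $\m_a$), but the identification as stated should not be asserted. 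Your primary route through $\m_a/\m_a^2\cong\wh{\m}_a/\wh{\m}_a^2$ does need justification beyond the generic ``completion of a local ring'' fact, since $\CO_a$ is far from Noetherian; the paper records that isomorphism separately in Remark \ref{rem:hatTaM}, again deferring to \cite{CSW1}, so your instinct to supply the chart-based alternative was the right one.
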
	

\begin{proof} For every $\eta\in \mathrm{T}_a (M)$, the restriction of $\eta$ on $\m_a$ induces a linear functional  $\overline{\eta}$ on $\m_a/\m_a^2$.
One easily checks that the assignment 
$\eta\mapsto \overline{\eta}$ defines a linear isomorphism from  $\mathrm{T}_a(M)$  to $(\m_a/\m_a^2)^*$.
On the other hand, by 
\cite[Lemma 2.5]{CSW1}, we have that
\[\dim \m_a/\m_a^2=\dim_a M+\deg_a M.\] Hence $\dim \mathrm T_a (M) =\dim_a M+\deg_a M$.
\end{proof}

\begin{remarkd}\label{rem:hatTaM}
  It follows from the proof of \cite[Proposition 2.6]{CSW1} that the natural map $\m_a/(\m_a)^2\rightarrow \wh{\m}_a/(\wh{\m}_a)^2$  is an isomorphism. Thus we also have the   identification
  \[
  \mathrm{T}_a(M)=(\wh{\m}_a/(\wh{\m}_a)^2)^*.
  \]
\end{remarkd}

Let $n,k\in \BN$, and let $N$ be an open submanifold of $\BR^n$. Write 
\[(x_1,x_2,\dots,x_n,y_1,y_2,\dots,y_k)\] for the 
 the standard coordinate system of $N^{(k)}$.
  We denote the first-order partial derivatives with respect to the variables $x_1, x_2, \dots, x_n$ as
\[\partial_{x_1},\partial_{x_2},\dots,\partial_{x_n}.\]
 For each $n$-tuple $I=(i_1,i_2,\dots,i_n)\in \BN^n$, we use  the following usual multi-index notations:
\[  |I|=i_1+i_2+\cdots+i_n,\quad 
  x^I=x_1^{i_1}x_2^{i_2}\cdots x_n^{i_n},  \qaq \partial_x^I=\partial_{x_1}^{i_1}\partial_{x_2}^{i_2}\cdots \partial_{x_n}^{i_n}.
\]
We also view the $|I|$-th order partial derivative $\partial_x^I$ as an operator on the formal power series algebra $\RC^\infty(N)[[y_1,y_2,\dots,y_k]]$ in
an obvious way.
Let
\[\partial_{y_1},\partial_{y_2},\dots,\partial_{y_k}\] denote the first-order  partial  derivatives on  $\RC^\infty(N)[[y_1,y_2,\dots,y_k]]$
with respect to the formal variables $y_1,y_2,\dots,y_k$.
For every $k$-tuple $J\in \BN^k$, 
the length $|J|$, the monomial $y^J$, and the $|J|$-th order partial derivative $\partial_y^J$, are defined in a similar way.
Note that derivatives $\partial^I_x$ and $\partial^J_y$  act naturally on the stalks and formal stalks of $N^{(k)}$.

\begin{exampled}\label{ex:tanvec}
With the above notations,  the set
\[\{\delta_a\circ \partial_{x_i},\, \delta_a\circ\partial_{y_j}\mid i=1,2,\dots,n,\,j=1,2,\dots,k\}\]
forms  a basis of $\mathrm{T}_a (N^{(k)})$.
\end{exampled}

 By an LCS, we mean a locally convex topological vector
space over $\BC$ which may or may not be Hausdorff. 
For an LCS $E$, let $E'$ denote the space of all continuous linear functionals on $E$, equipped with the strong topology.

 We equip the algebra  $\CO(M)$  with the smooth topology (see \cite[Definition 4.1]{CSW1}),
 equip the stalk $\CO_a$ with the canonical inductive limit topology (see \cite[(5.12)]{CSW2}), and equip the formal stalk
$\widehat\CO_a$ with the canonical projective limit topology (see \cite[\S\,2.3]{CSW1}). 
Then all of them are LCS, and $\widehat{\CO}_a$ is in fact the Hausdorff LCS associated to $\CO_a$, namely, $\CO_a/\overline{\{0\}}=\wh\CO_a$ as LCS (see \cite[Lemma 5.25]{CSW2}).

Write $\mathrm{Dist}_a(M)\subset (\CO(M))'$ for the LCS of all formal distributions on $M$ supported in $\{a\}$. 
By taking the transpose, the canonical continuous linear maps 
\[
\CO(M)\rightarrow \CO_a\quad\text{and}\quad \CO_a\rightarrow \wh\CO_a
\]
induce the following LCS identifications:
\begin{eqnarray}\label{eq:desDist}
  (\wh\CO_a)'=(\CO_a)'=\mathrm{Dist}_a(M).  
\end{eqnarray}
See  \cite[\S\,5.3]{CSW2} for details. 

Example \ref{ex:tanvec} and \cite[Lemma 5.23]{CSW2} imply that every tangent vector of $M$ at $a$ is continuous. Thus we can identify $\mathrm{T}_a(M)$ as a subspace of $(\widehat{\CO}_a)'$ or 
$\mathrm{Dist}_a(M)$. In fact, write $\mathrm{Der}_a(M)$ (resp.\,$\widehat{\mathrm{T}}_a(M)$) for the space of those $\BC$-valued linear functionals $\eta$ on $\CO(M)$ (resp.\,$\widehat{\CO}_a$) such that 
\begin{eqnarray*}
\la\eta, fg\ra=f(a)\cdot\la \eta, g\ra+ \la \eta, f\ra \cdot g(a)\quad \text{for all}\ f,g\in \CO(M)\ \text{(resp.\,$\widehat{\CO}_a$)}.
\end{eqnarray*}

The following result is straightforward. 

\begin{lemd}\label{lem:idenTaM} Under the identification   \eqref{eq:desDist},  we have that 
\[
\widehat{\mathrm{T}}_a(M)=\mathrm{T}_a(M)=\mathrm{Der}_a(M).
\]
\end{lemd}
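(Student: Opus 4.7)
The plan is to organize the proof around the canonical chain of $\BC$-algebra homomorphisms $\CO(M)\to \CO_a\to \wh\CO_a$, each of which intertwines the evaluation-at-$a$ maps $\mathrm{Ev}_a$ and $\delta_a$. Pulling back linear functionals along these maps preserves the Leibniz rule at $a$, so the canonical restrictions give obvious inclusions $\widehat{\mathrm{T}}_a(M)\hookrightarrow \mathrm{T}_a(M)\hookrightarrow \mathrm{Der}_a(M)$. The content of the lemma is that this chain closes up to equalities under the identification \eqref{eq:desDist}, so it suffices to establish the reverse inclusions.

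For $\mathrm{Der}_a(M)\subseteq \mathrm{T}_a(M)$, I would first show any $\eta\in \mathrm{Der}_a(M)$ vanishes on the kernel of $\CO(M)\to \CO_a$. If $f\in \CO(M)$ satisfies $f_a=0$, then $f|_V=0$ for some open $V\ni a$. Choose a smooth cutoff $\rho\in \RC^\infty(M)\subset \CO(M)$ with $\rho\equiv 1$ near $a$ and $\mathrm{supp}(\rho)\subset V$; since $\CO$ is a sheaf of $\RC^\infty$-algebras, the product $f\rho$ vanishes globally, so $f=f(1-\rho)$. Both factors have value zero at $a$, so the Leibniz rule immediately gives $\eta(f)=0$. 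A second cutoff argument shows the germ map $\CO(M)\to \CO_a$ is surjective: a representative $\tilde f\in \CO(U)$ of a given germ can be multiplied by a cutoff $\rho$ with $\mathrm{supp}(\rho)\subset U$ and $\rho\equiv 1$ near $a$, producing a global formal function $\rho\tilde f$ (extended by zero) with the prescribed germ at $a$. Consequently $\eta$ descends to a well-defined linear functional $\tilde\eta$ on $\CO_a$; the Leibniz rule is inherited because the germ map preserves values at $a$, so $\tilde\eta\in \mathrm{T}_a(M)$.

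To close the loop with $\mathrm{T}_a(M)\subseteq \widehat{\mathrm{T}}_a(M)$, I would use the continuity of tangent vectors recorded before the lemma via Example \ref{ex:tanvec} and \cite[Lemma 5.23]{CSW2}. Any $\eta\in \mathrm{T}_a(M)$ is continuous on $\CO_a$, hence vanishes on $\overline{\{0\}}$ and therefore factors uniquely through the canonical map $\CO_a\to \CO_a/\overline{\{0\}}=\wh\CO_a$ to give $\hat\eta\in (\wh\CO_a)'$. The Leibniz rule for $\hat\eta$ is immediate from that of $\eta$ because $\CO_a\to \wh\CO_a$ is a $\BC$-algebra homomorphism intertwining the two $\delta_a$ maps; thus $\hat\eta\in \widehat{\mathrm{T}}_a(M)$.

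The main technical step is the cutoff argument used both to prove vanishing on the kernel and the surjectivity of the germ map. It is not difficult but depends essentially on the fact built into Definition \ref{def:formalmanifold} that $\CO$ is locally modeled on $\RC^\infty(U)[[y_1,\dots,y_k]]$, so that multiplication by an ordinary smooth bump on $M$ makes sense on formal functions. Everything else is a routine Leibniz-rule chase combined with the identification \eqref{eq:desDist}.
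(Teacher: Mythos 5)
Your proof is correct and is exactly the standard argument the paper has in mind when it declares the lemma ``straightforward'' (the paper supplies no proof): pull back along $\CO(M)\to\CO_a\to\wh\CO_a$ for the easy inclusions, use a cutoff to show a derivation on $\CO(M)$ kills germs vanishing at $a$ and that the germ map is surjective, and use the continuity of tangent vectors (already recorded before the lemma) to factor through $\wh\CO_a=\CO_a/\overline{\{0\}}$. One cosmetic point: for a general formal manifold there is no canonical inclusion $\RC^\infty(M)\subset\CO(M)$, so the cutoff $\rho$ should be produced directly as an element of $\CO(M)$ via the partition of unity of \cite[Proposition 2.3]{CSW1} (subordinate to $\{V,\,M\setminus\{a\}\}$), which gives exactly the properties you use.
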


\subsection{Differentials} \label{subsec:diff}

Let
\be\label{eq:morvarex}
\varphi=(\overline\varphi,\varphi^*):\ (M',\CO')\rightarrow (M,\CO)
\ee be a morphism of formal manifolds, and let $b\in M'$.
For every $\eta\in \mathrm{T}_b (M')$, the linear functional
 \be\label{eq:dvarphieta} d\varphi_b(\eta):\ \CO_{\overline\varphi(b)}\rightarrow \C,\qquad f\mapsto  \la\eta, \varphi_b^*(f)\ra\ee 
 is  a tangent vector of $M$ at $\overline\varphi(b)$.
As in the Introduction, the resulting linear map
\be
\label{eq:diffmapata}d\varphi_b:\ \mathrm{T}_b (M')\rightarrow \mathrm{T}_{\overline\varphi(b)}(M),\qquad
\eta\mapsto d\varphi_b(\eta)\ee
 is called the differential of $\varphi$ at $b$. 
It is obvious that
\be\label{eq:chainrule} d(\varphi'\circ \varphi)_b=d\varphi'_{\overline{\varphi}(b)}\circ d\varphi_b,\ee 
where $\varphi': M\rightarrow M_1$ is another morphism of formal manifolds.
 When both $M'$ and $M$ are smooth manifolds, $d\varphi_b$ is the usual differential map.

In view of \cite[Theorem 4.15 and Lemma 2.8]{CSW1}, the $\C$-algebra homomorphisms
\[
\varphi^*:\ \CO(M)\rightarrow \CO'(M'),\quad  \varphi^*_b:\ \CO_b\rightarrow \CO'_b,\quad\text{and}\quad 
\varphi^*_b:\ \widehat{\CO}_b\rightarrow \widehat{\CO'}_b
\]
are all continuous. 
By taking the  transpose of the continuous linear map 
\be \label{eq:convarphia}
\varphi_b^*:\ \CO'_{\overline\varphi(b)}\rightarrow \CO_b \quad\text{or}\quad \varphi^*_b:\ \widehat {\CO'}_{\overline{\varphi}(b)}\rightarrow \widehat \CO_b,
\ee we obtain  a linear map (see \eqref{eq:desDist})
 \be \label{eq:tvarphia}{}^t\varphi_b^*:\ \mathrm{Dist}_{b}(M)\rightarrow \mathrm{Dist}_{\overline\varphi(b)}(M').\ee
It is clear that \be \label{eq:diffmapasres}
{}^t\varphi_b^*|_{\mathrm{T}_b(M)}=d\varphi_b:\ 
\mathrm{T}_b(M)\rightarrow \mathrm{T}_{\overline\varphi(b)}(M').\ee

\begin{remarkd}\label{rem:dvarphivsvarphi2}
   Using the identification given in Remark \ref{rem:hatTaM}, $d\varphi_b$ coincides with the transpose of the linear map (see \eqref{eq:Introvarphi2})
    \[
    \varphi^*_{b,2}: \  \wh{\m}_{\overline{\varphi}(b)}/\wh{\m}_{\overline{\varphi}(b)}^2\rightarrow \wh{\m'}_{b}/\wh{\m'}_{b}^{2}.
    \]
\end{remarkd}

Recall the formal manifolds $M_a$ and $\underline{M}$ from  \eqref{eq:M_a} and \eqref{eq:underM}, respectively.
\begin{lemd}\label{lem:taninj}
For every $a\in M$, the differential 
\be\label{eq:taninj}
\mathrm{T}_{a} (\underline M)\rightarrow \mathrm{T}_{a} (M) \ee
of the  morphism $\underline{M}\rightarrow M$ at $a$ is an injection, and the differential
\[\mathrm{T}_{a} (M_a)\rightarrow \mathrm{T}_{a} (M)
\]
of the morphism $M_a\rightarrow M$ at $a$ is a bijection. 
\end{lemd}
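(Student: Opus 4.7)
The plan is to translate both differentials into maps between cotangent spaces and compute them in coordinates. By Lemma~\ref{lem:dimtan} and Remark~\ref{rem:hatTaM}, the tangent space at $a$ is canonically the dual of $\m_a/\m_a^2$, which further equals $\wh{\m}_a/\wh{\m}_a^2$; and by Remark~\ref{rem:dvarphivsvarphi2}, the differential of a morphism is the transpose of the induced cotangent map. So I intend to read off each statement from a single cotangent computation.

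For the morphism $M_a\to M$, I would note that the stalk of the structure sheaf of $M_a$ at its unique point is $\wh{\CO}_a$, which is its own formal completion, and the structure sheaf map is the canonical completion homomorphism $\CO_a\to \wh{\CO}_a$. Remark~\ref{rem:hatTaM} tells me that this completion already induces an isomorphism $\m_a/\m_a^2\cong \wh{\m}_a/\wh{\m}_a^2$. Hence the induced cotangent map of the morphism $M_a\to M$ is (the identity on) $\wh{\m}_a/\wh{\m}_a^2$, and its transpose is a bijection between the tangent spaces.

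For the morphism $\underline{M}\to M$, I would pick a chart identifying a neighborhood of $a$ in $M$ with $N^{(k)}$ for some open $N\subset \BR^n$ containing $a$. Under this identification the structure sheaf map of $\underline{M}\to M$ corresponds, on local sections, to the $\BC$-algebra homomorphism $\RC^\infty(N)[[y_1,\dots,y_k]]\to \RC^\infty(N)$ sending $\sum_I f_I(x)\,y^I$ to $f_0(x)$, i.e.\ the quotient by the ideal generated by the formal variables. Using the basis $\{\delta_a\circ\partial_{x_i},\,\delta_a\circ\partial_{y_j}\}$ of $\mathrm{T}_a(M)$ from Example~\ref{ex:tanvec} and the corresponding basis $\{\delta_a\circ\partial_{x_i}\}$ of $\mathrm{T}_a(\underline{M})$, I would check directly from the definition \eqref{eq:dvarphieta} that the differential sends $\delta_a\circ\partial_{x_i}$ in $\mathrm{T}_a(\underline{M})$ to $\delta_a\circ\partial_{x_i}$ in $\mathrm{T}_a(M)$ for each $i=1,\dots,n$. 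These $n$ images are part of the basis of $\mathrm{T}_a(M)$ exhibited in Example~\ref{ex:tanvec}, hence linearly independent, which gives the desired injectivity.

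Neither part presents a genuine obstacle; the statement is essentially a book-keeping check once the cotangent identifications are in place. The only point that requires mild care is matching the algebraic description of the morphism $\underline{M}\to M$ with the quotient $\CO/\m_\CO$ defined in \eqref{eq:defmo}, which in a chart reduces to the observation that $\m_\CO$ is exactly the ideal generated by the formal variables $y_1,\dots,y_k$.
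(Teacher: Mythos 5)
Your proof is correct, but it diverges from the paper's argument in the first half. For the injectivity of $\mathrm{T}_a(\underline M)\rightarrow \mathrm{T}_a(M)$, the paper gives a chart-free argument: the reduction homomorphism $\CO(M)\rightarrow \underline{\CO}(M)$ is surjective, so its transpose $\mathrm{Dist}_a(\underline M)\rightarrow \mathrm{Dist}_a(M)$ is injective, and the differential is the restriction of this transpose to tangent vectors. You instead compute in a chart $N^{(k)}$, identify $\m_\CO$ with the ideal generated by the formal variables, and check that $\delta_a\circ\partial_{x_i}$ is sent to $\delta_a\circ\partial_{x_i}$, which lands inside the basis of Example \ref{ex:tanvec}. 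Both are sound; the paper's version avoids choosing coordinates and reuses the surjectivity-implies-injective-transpose mechanism that recurs throughout Section 4, while yours is more explicit and makes visible exactly which tangent directions survive. For the bijectivity of $\mathrm{T}_a(M_a)\rightarrow\mathrm{T}_a(M)$ the two arguments are essentially the same: the paper invokes $\mathrm{T}_a(M)=\widehat{\mathrm{T}}_a(M)$ from Lemma \ref{lem:idenTaM}, and you invoke the dual statement $\m_a/\m_a^2\cong\wh{\m}_a/(\wh{\m}_a)^2$ from Remark \ref{rem:hatTaM} together with the observation that $\wh\CO_a$ is its own completion; these are two formulations of the same identification.
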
 
 \begin{proof}
 Since the homomorphism 
\[\CO(M)\rightarrow \underline{\CO}(M)
\] 
 is surjective (see \cite[Proposition 2.14]{CSW1}), 
the linear map $\mathrm{Dist}_{a}(\underline M)\rightarrow \mathrm{Dist}_{a}(M)$ is injective and so is the differential map \eqref{eq:taninj}. The second assertion follows easily from the identification that $\mathrm{T}_a(M)=\widehat{\mathrm{T}}_a(M)$ (see Lemma \ref{lem:idenTaM}).
 
 \end{proof}



Identify  $\mathrm{T}_{a}(\underline M)$ as a subspace of $\mathrm{T}_{a}( M) $ via the injection \eqref{eq:taninj}, and set 
\[
\mathrm{T}_a^\mathrm{f}(M):=\mathrm{T}_a(M)/\mathrm{T}_a(\underline{M}).
\]
Let $\varphi$ be a morphism as in \eqref{eq:morvarex}.
By \eqref{eq:underlinecommdiag} and 
\eqref{eq:chainrule}, we have the commutative diagram
\be\begin{CD}
\mathrm{T}_b(\underline{M'})@>d\underline{\varphi}_b>> \mathrm{T}_{\overline{\varphi}(b)}(\underline{M})\\ @VVV @VVV \\\mathrm{T}_b(M')@>d\varphi_b>> \mathrm{T}_{\overline{\varphi}(b)}(M).
\end{CD}\ee
Then we have a linear map 
\begin{eqnarray} \label{eq:dulphi}
d\varphi_b|_{\mathrm{T}_{b} (\underline M')}=d\underline{\varphi}_b:\ \mathrm{T}_{b}(\underline M')\rightarrow  \mathrm{T}_{\overline\varphi(b)}(\underline M).
\end{eqnarray}
This also induces a natural linear map 
\[
d\varphi_b^\mathrm{f}:\ \mathrm{T}_b^\mathrm{f}(M')\rightarrow \mathrm{T}_{\overline{\varphi}(b)}^\mathrm{f}(M).
\]

\begin{remarkd}\label{rem:char} Let $E$ and $E'$ be two finite-dimensional complex vector spaces. Up to equivalence, linear maps from $E$ to $E'$ are characterized by their ranks. 
Let $E_1$ and $E_1'$ be subspaces of $E$ and $E'$,  respectively. Here we give an analog characterization of the set
\be \label{eq:linearmapsh}
\{h:E'\rightarrow E \mid \text{$h$ is a linear map satisfying that $h(E_1)\subset E_1'$}\}.
\ee
For every linear map $h$ as in \eqref{eq:linearmapsh}, we have two canonical linear maps
\[
\underline{h}:\ E'_1\rightarrow E_1\quad\text{and}\quad 
h^{\mathrm{f}}:\ E'/E'_1\rightarrow E/E_1
\]
induced by $h$. 
On the other hand, let $h_1$ and $h_2$ be two linear maps in \eqref{eq:linearmapsh}.
  We say that  $h_1$ is equivalent to $h_2$  if 
  there is a linear automorphism  $\vartheta$ on $E$ and a linear automorphism $\vartheta'$ on $E'$  such that \[\vartheta(E_1)=E_1,\quad \vartheta'(E_1')=E_1,\quad\text{and}\quad \vartheta\circ h_1\circ \vartheta'=h_2.\]
It is straightforward to verify that the linear maps $h_1$ and $h_2$ are equivalent if and only if 
\[(\mathrm{Rank}(h_1),\mathrm{Rank}(\underline {h_1}),\mathrm{Rank}(h_1^{\mathrm{f}}))=(\mathrm{Rank}(h_2),\mathrm{Rank}(\underline {h_2}),\mathrm{Rank}(h_2^{\mathrm{f}})).\]
In other words, as mentioned in the Introduction, a linear map $h$ in \eqref{eq:linearmapsh} is characterized by the triple
$(\mathrm{Rank}(h),\mathrm{Rank}(\underline {h}),\mathrm{Rank}(h^{\mathrm{f}}))$.
\end{remarkd}

Motivated by the above remark, 
we make the following definition (see also Definition \ref{de:rank}).

\begin{dfn}\label{de:rankvarphia}
We call   the triple 
    \[
    (\mathrm{Rank}(d\varphi_b),\mathrm{Rank}(d\underline{\varphi}_b), \mathrm{Rank}(d\varphi_b^\mathrm{f}))
    \]
  the rank of $\varphi$ at $b$, to be denoted by  $\mathrm{Rank}_b(\varphi)$.
\end{dfn}

Note that we have the following obvious inequality: 
 \[
\mathrm{Rank}(d\varphi_b)\ge \mathrm{Rank}(d\underline{\varphi}_b)+ \mathrm{Rank}(d\varphi_b^\mathrm{f}).
 \]

\begin{exampled}\label{ex:Jacobian} Suppose that $N'$ is an open submanifold of $\R^{n'}$, $N$ is a nonempty open submanifold of $\R^n$, and
\[\varphi=(\overline\varphi,\varphi^*):{(N')}^{(k')}\rightarrow {N}^{(k)}\] is a morphism of formal manifolds, where $n,n',k,k'\in \BN$.  Let 
\[(u_1, u_2, \dots , u_{n'},z_1,z_2,\dots,z_{k'})\quad\text{and}\quad (x_1, x_2, \dots , x_n,y_1,y_2,\dots,y_k)\]
denote the standard coordinate systems of $(N')^{(k')}$ and $N^{(k)}$, respectively. 
For $i=1,2,\dots,n$ and $j=1,2,\dots,k$, write
\begin{eqnarray}
\varphi^*(x_i)&=&f_i+\sum_{j'=1}^{k'} g_{ij'} z_{j'}+\sum_{J\in \BN^{k'}, |J|\ge 2} g_{iJ} z^J,\\
\varphi^*(y_j)&=&\sum_{j'=1}^{k'} h_{jj'} z_{j'}+\sum_{J\in \BN^{k'}, |J|\ge 2} h_{jJ} z^J,
              \end{eqnarray}
              where $f_i, g_{ij'}, g_{iJ}, h_{jj'}, h_{jJ}\in \RC^\infty(N')$. One easily checks that for every $b\in N'$,
\begin{eqnarray*}
d\varphi_b(\delta_b\circ \partial_{u_{i'}})&=&\sum_{i=1}^n  (\partial_{u_{i'}}f_i)(b)(\delta_{\overline\varphi(b)}\circ \partial_{x_i}),\\
d\varphi_b(\delta_b\circ \partial_{z_{j'}})&=&\sum_{i=1}^n  g_{ij'}(b)(\delta_{\overline\varphi(b)}\circ \partial_{x_i})
+\sum_{j=1}^k h_{jj'}(b)(\delta_{\overline\varphi(b)}\circ \partial_{y_j}),\end{eqnarray*}
where $i'=1,2,\dots,n'$ and $j'=1,2,\dots,k'$.
              We call the $(n+k)\times (n'+k')$-matrix
\begin{equation}\label{eq:Jacobian}
J=\left(
  \begin{array}{cc}
    F & G \\
    0 & H \\
  \end{array}
\right) \end{equation}
 the Jacobian  of $\varphi$, where
\[ F=\left(\partial_{u_{i'}}(f_i)\right)_{\substack{1\le i\le n,\\1\le i'\le n'}},\
G=\left(g_{ij'}\right)_{\substack{1\le i\le n,\\1\le j'\le k'}},\quad \text{and}\quad  H=\left(h_{jj'}\right)_{\substack{1\le j\le k,\\1\le j'\le k'}}.\]
Then $F$ is the usual Jacobian of the smooth map $\underline{\varphi}: N'\rightarrow N$. 
For every $b\in N'$, the matrix $J_b$ is called the Jacobian of $\varphi$ at $b$, and we have that 
\be \label{eq:Rk=JFH}
\mathrm{Rank}_b(\varphi)=(\mathrm{Rank}(J_b),\mathrm{Rank}(F_b),\mathrm{Rank}(H_b)).
\ee
Furthermore, the following statements are equivalent to each other:
\begin{itemize}
\item $d\varphi_b$ is a bijection;
\item  there is an open neighborhood $U'$ of $b$ such that  $d\varphi_{b'}$ is a bijection for all $b'\in U'$;
\item $J_b$ is invertible.
\end{itemize}
Here and henceforth,  for a matrix $C=(c_{ij})$ of smooth functions on $N'$, $C_b$ stands for the complex matrix $(c_{ij}(b))$ obtained
by taking the values at $b$, and $\mathrm{Rank}(C_b)$ stands for the rank of $C_b$.

\end{exampled}

\subsection{Proof of Theorem \ref{thm:invfun}}\label{subsec:invthm}




This subsection is devoted to a proof of Theorem \ref{thm:invfun}. 
For every $r\in \BN$,
write $\m^r_{\CO}$ for the ideal of $\CO$  such that
\[\m_{\CO}^r(U)= \{f\in \CO(U): f_a\in \m^r_a \textrm { for all   } a\in U\} \]
for all  open subsets $U$ of $M$, where  $f_a$
is the germ of $f$ in $\CO_a$.

\begin{exampled}\label{ex:mor}
If $M=N^{(k)}$ for some smooth manifold $N$ and $k\in \BN$, then
\be
\label{eq:mor}\m_{\CO}^r(U)=\left\{\sum_{J\in \BN^k,|J|\ge r}f_J y^J\mid f_J\in \RC^\infty(U)\right\}.\ee
\end{exampled}


\begin{prpd}\label{prop:moge} We have an identification 
\be \label{eq:o=invlim}\CO=\varprojlim_{r\in \BN}\CO/\m_{\CO}^r\ee
of sheaves of $\mathbb C$-algebras. (More precisely, the canonical morphism $\CO\rightarrow \varprojlim\limits_{r\in \BN}\CO/\m_{\CO}^r$ is an isomorphism.) 
\end{prpd}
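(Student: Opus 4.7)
Both sides of the claimed identification are sheaves of $\BC$-algebras on $M$, and the canonical morphism $\CO\rightarrow \varprojlim_{r\in \BN}\CO/\m_\CO^r$ is defined sheaf-theoretically. It therefore suffices to prove bijectivity of the induced map of sections on each member of an atlas, which reduces the statement to the case $M=N^{(k)}$ for an open submanifold $N$ of $\BR^n$ and $k\in \BN$.

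In this setting, Example \ref{ex:mor} gives the explicit description
\[
\m_\CO^r(U)=\Bigl\{\sum_{J\in \BN^k,\,|J|\ge r} f_J\,y^J \,:\, f_J\in \RC^\infty(U)\Bigr\}
\]
for every open $U\subset N$. The first step will be to identify the sheaf quotient $\CO/\m_\CO^r$ with the presheaf quotient. The latter admits the canonical isomorphism
\[
\CO(U)/\m_\CO^r(U)\cong \RC^\infty(U)[y_1,\dots,y_k]/(y_1,\dots,y_k)^r\cong \bigoplus_{J\in \BN^k,\,|J|<r}\RC^\infty(U)\cdot y^J,
\]
and the presheaf $U\mapsto \bigoplus_{|J|<r}\RC^\infty(U)\cdot y^J$ on $N$ is a finite direct sum of copies of the sheaf $\RC^\infty$, hence already a sheaf. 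Sheafification is therefore trivial and $(\CO/\m_\CO^r)(U)=\CO(U)/\m_\CO^r(U)$.

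The second step will be to pass to the inverse limit. Limits in the category of sheaves of $\BC$-algebras commute with the sections functor, so
\[
\Bigl(\varprojlim_{r\in \BN}\CO/\m_\CO^r\Bigr)(U)=\varprojlim_{r\in \BN}\RC^\infty(U)[y_1,\dots,y_k]/(y_1,\dots,y_k)^r=\RC^\infty(U)[[y_1,\dots,y_k]]=\CO(U),
\]
where the middle equality is the standard realization of a formal power series as the compatible family of its polynomial truncations. Under this identification the canonical morphism is the identity on $\CO(U)$, and gluing these isomorphisms over an atlas yields the proposition.

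The only substantive point is the equality $(\CO/\m_\CO^r)(U)=\CO(U)/\m_\CO^r(U)$ on a chart, i.e.\ that the presheaf quotient is already a sheaf. This is what allows us to bypass sheafification and reduce the problem to the transparent algebraic identity $\RC^\infty(U)[[y]]=\varprojlim_r\RC^\infty(U)[y]/(y)^r$; once this is in place, the remainder is a routine unwinding of definitions.
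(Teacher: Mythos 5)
Your proof is correct and follows essentially the same route as the paper's: reduce to a chart, identify $(\CO/\m_\CO^r)(U)$ with the presheaf quotient $\CO(U)/\m_\CO^r(U)$, and then compute the inverse limit of sections using the explicit description of $\m_\CO^r$ on $N^{(k)}$. The only (harmless) difference is that you justify the quotient-sheaf identification by observing directly that the presheaf quotient is a finite direct sum of copies of $\RC^\infty$ and hence already a sheaf, whereas the paper invokes the softness of $\m_\CO^r$ together with a vanishing theorem from Bredon.
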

\begin{proof}
Take an atlas $\{U_\gamma\}_{\gamma\in \Gamma}$ of $M$.
For every $\gamma\in \Gamma$,
it follows from the softness of $\m^r_{\CO}$ (see  \cite[Corollary 2.13]{CSW1}), \cite[Theorem 9.9]{Br2}, and
\eqref{eq:mor} that
\begin{eqnarray*}
   && (\varprojlim_{r\in \BN}(\CO/\m_{\CO}^r))(W_\gamma)=\varprojlim_{r\in \BN}((\CO/\m_{\CO}^r)(W_\gamma))\\&=&\varprojlim_{r\in \BN}(\CO(W_\gamma)/\m_{\CO}^r(W_\gamma))=\CO(W_\gamma),
\end{eqnarray*}
where $W_\gamma$ is an open subset of $U_\gamma$. Thus we have that
\[\CO|_{U_\gamma}=(\varprojlim_{r\in \BN}(\CO/\m_{\CO}^r))|_{U_\gamma},\]
which implies the proposition.
\end{proof}

For every $r\in \BN$, form the quotient sheaf $\m_\CO^r/\m_{\CO}^{r+1}$ over $M$.
The softness of $\m_\CO^{r+1}$ and \cite[Theorem 9.9]{Br2} implies that
\[(\m_\CO^r/\m_{\CO}^{r+1})(U)=\m_\CO^r(U)/\m_{\CO}^{r+1}(U)\quad (\text{$U$ is an open subset of $M$}).\]
Then $\m_\CO^r/\m_{\CO}^{r+1}$ is naturally   a sheaf of  $\underline{\CO}$-modules with the action given by
\[(f+\m_{\CO}(U))\cdot  (g+\m_{\CO}^{r+1}(U))=fg+\m_{\CO}^{r+1}(U), \]
where  $f\in \CO(U)$ and $g\in \m_{\CO}^r(U)$.

\begin{exampled}  If $M=N^{(k)}$ for some smooth manifold $N$ and $k\in \BN$, then
the $\underline{\CO}(U)$-module $(\m^r_{\CO}/\m^{r+1}_{\CO})(U)$ is free with a basis
\be\label{eq:basismrr+1}\{y^J+\m^{r+1}_{\CO}(U): J\in \BN^k\ \text{with}\ |J|=r\}\ee  given by (see \eqref{eq:mor})
\end{exampled}

Let $\varphi=(\overline\varphi,\varphi^*): (M',\CO')\rightarrow (M,\CO)$ be a morphism of formal manifolds.
For every $r\in \BN$, there is a unique morphism
\[
\varphi_r=(\overline{\varphi},\varphi^*_r):\ (M',{\CO'}/\m_{\CO'}^{r}) \rightarrow (M,\CO/\m_{\CO}^{r})\]
of locally ringed spaces such that the diagram
\begin{equation*} \xymatrix{
 (M',\CO')
   \ar[rr]^{\varphi}
                &&(M,\CO) \\
 (M',\CO'/\m_{\CO'}^{r})\ar[u]  \ar[rr]^{\varphi_r}
                && (M,\CO/\m_{\CO}^{r}) \ar[u]           }
\end{equation*}
commutes. 
Moreover, $\varphi_{r+1}$ induces a morphism
\[\varphi^*_{r,r+1}:\ \overline\varphi^{-1}(\m_{\CO'}^r/\m_{\CO'}^{r+1})\rightarrow \m_\CO^r/\m_{\CO}^{r+1}\]
 of $ \overline{\varphi}^{-1}\underline{\CO}$-modules.

\begin{exampled}\label{ex:varphir} In the setting of  Example \ref{ex:Jacobian},
take open subsets $W\subset N$ and $W'\subset N'$  such that  $\overline\varphi(W')\subset W$.
Let $r\in \BN$. Every element in $(\m_{\CO_N^{(k)}}^{r}/\m_{\CO_N^{(k)}}^{r+1})(W)$
is uniquely  of the form (see \eqref{eq:basismrr+1})
\[\mathbf{f}:=\sum_{J\in \BN^k, |J|=r} f_J y^J+\m_{\CO_N^{(k)}}^{r+1}(W), \]
where  $f_J\in \RC^\infty(W)$.
It is routine to check that
 \begin{eqnarray*}
(\varphi^*_{r,r+1})_{W,W'}(\mathbf{f})
=\sum_{J\in \BN^k, |J|=r}\underline{\varphi}^*_{W,W'}(f_J)\cdot H(z^J)|_{W'}+\m_{\CO_{N'}^{(k')}}^{r+1}(W'),
\end{eqnarray*}
where for $J=(j_1,j_2,\dots,j_k)\in \BN^k$,
\[H(z^J):=H(z_1)^{j_1}H(z_2)^{j_2}\cdots H(z_k)^{j_k}\quad \text{with}\quad H(z_j):=\sum_{j'=1}^{k'} h_{jj'}\cdot z_{j'}.\]
This particularly implies that if the reduction $\underline{\varphi}$ is a diffeomorphism and the matrix $H$ is invertible, then
$\varphi_{r,r+1}^*$ is an isomorphism.
\end{exampled}

Now we begin to prove Theorem \ref{thm:invfun} in the following special case.
\begin{lemd}\label{lem:invfun} Let $N',N$ be two open subsets of $\R^n$, and let \[\varphi=(\overline\varphi,\varphi^*):\ (N')^{(k)}\rightarrow
N^{(k)}\] be a morphism, where $n,k\in \BN$.
Assume that $\varphi$ satisfies the following conditions:
\begin{itemize}
\item the continuous  map $\overline{\varphi}: N'\rightarrow N$ is bijective; and
\item the differential $d\varphi_b:\mathrm{T}_b ((N')^{(k)})\rightarrow \mathrm{T}_{\overline\varphi(b)}(N^{(k)})$ is bijective for all $b\in N'$.
\end{itemize}
Then $\varphi$ is an isomorphism of formal manifolds.
\end{lemd}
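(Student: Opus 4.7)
The plan is to reduce the conclusion to showing that each truncated morphism $\varphi^*_r:\ \overline{\varphi}^{-1}(\CO/\m_\CO^r)\to \CO'/\m_{\CO'}^r$ is an isomorphism of sheaves, and then to invoke Proposition \ref{prop:moge} by passing to the inverse limit. First I would unpack the hypotheses via the Jacobian description of Example \ref{ex:Jacobian}: the bijectivity of $d\varphi_b$ at every $b\in N'$ forces both the smooth Jacobian $F_b$ of $\underline{\varphi}$ and the matrix $H_b$ to be invertible, since the upper-block-triangular Jacobian $J_b$ of $\varphi$ is invertible if and only if both diagonal blocks are. Combined with the set-theoretic bijectivity of $\overline{\varphi}$, the classical inverse function theorem then gives that $\underline{\varphi}:N'\to N$ is a diffeomorphism; in particular $\overline{\varphi}$ is a homeomorphism, so $\overline{\varphi}^{-1}$ is exact.

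Second, since $\underline{\varphi}$ is a diffeomorphism and the matrix $H$ is invertible at every point of $N'$, Example \ref{ex:varphir} yields that the sheaf morphism
\[
\varphi^*_{r,r+1}:\ \overline{\varphi}^{-1}(\m_\CO^r/\m_\CO^{r+1})\to \m_{\CO'}^r/\m_{\CO'}^{r+1}
\]
is an isomorphism for every $r\in \BN$. From this I would prove by induction on $r$ that $\varphi^*_r$ itself is an isomorphism. The base case $r=1$ reduces to the diffeomorphism $\underline{\varphi}$. For the inductive step, I would apply the five lemma to the morphism between the short exact sequences
\[
0\to \m_\CO^r/\m_\CO^{r+1}\to \CO/\m_\CO^{r+1}\to \CO/\m_\CO^r\to 0
\]
and its $\CO'$-analog, using that the outer two vertical arrows $\varphi^*_{r,r+1}$ and $\varphi^*_r$ are already known to be isomorphisms.

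Finally, I would pass to the inverse limit. By Proposition \ref{prop:moge} we have $\CO=\varprojlim_r \CO/\m_\CO^r$ and $\CO'=\varprojlim_r \CO'/\m_{\CO'}^r$, and since $\overline{\varphi}$ is a homeomorphism the pullback $\overline{\varphi}^{-1}$ commutes with these inverse limits, a routine check using the sectionwise descriptions and the softness results from \cite{CSW1} already invoked in the proof of Proposition \ref{prop:moge}. Assembling the isomorphisms $\varphi^*_r$ yields the desired isomorphism $\varphi^*:\overline{\varphi}^{-1}\CO\to \CO'$, so $\varphi$ is an isomorphism of formal manifolds. I expect the main technical obstacle to be exactly this bookkeeping in the inductive step combined with the inverse-limit passage; the analytic content is already supplied by the classical inverse function theorem and by Example \ref{ex:varphir}, so the remaining difficulty is purely sheaf-theoretic and should be entirely routine modulo the softness and limit facts cited above.
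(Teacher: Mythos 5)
Your proposal is correct and follows essentially the same route as the paper's proof: invertibility of the Jacobian blocks $F_b$ and $H_b$ giving that $\underline{\varphi}$ is a diffeomorphism and $H$ is invertible, then Example \ref{ex:varphir} for the isomorphisms $\varphi^*_{r,r+1}$, induction via the short exact sequences, and finally the inverse-limit identification of Proposition \ref{prop:moge}. No substantive differences.
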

\begin{proof}  With the notations used in Example \ref{ex:Jacobian}, the second condition in the lemma implies that for every $b\in N'$,
the Jacobian $J_b$ of $\varphi$ at $b$ is invertible.
From \eqref{eq:Jacobian} it follows that the matrices $F_b$ and $H_b$ are invertible as well.
Recall that $F_b$ is the Jacobian of the smooth map $\underline{\varphi}: N'\rightarrow N$ at $b$.
Together with the first condition in the lemma, this implies that   $\underline{\varphi}$ is a diffeomorphism.
Meanwhile, the determinant  of $H$ is a nowhere vanishing smooth function on $N$, and hence
  $H$ is invertible.
  Thus, from Example \ref{ex:varphir}, we have that the morphism
  \begin{eqnarray}\label{eq:varphirr+1iso}
\varphi^*_{r,r+1}:\overline\varphi^{-1}(\m_{\CO}^r/\m_{\CO}^{r+1})\rightarrow \m_{\CO'}^r/\m_{\CO'}^{r+1}\quad
(\CO':=\CO_{N'}^{(k)},\ \CO:=\CO_N^{(k)})
\end{eqnarray}
  is an isomorphism for every $r\in \BN$.

By induction on $r$, the commutative diagram
	 	\[\xymatrix{0\ar[r]&\overline\varphi^{-1}(\m_{\CO}^r /\m_{\CO }^{r+1}) \ar[r]\ar[d]^{\varphi^*_{r,r+1}}&
\overline\varphi^{-1}(\CO/\m_{\CO}^{r+1})\ar[r]\ar[d]^{\varphi^*_{r+1}}&\overline\varphi^{-1}(\CO/\m_{\CO}^{r})\ar[r]\ar[d]^{\varphi^*_r}&0
	 	\\0\ar[r]& \m_{\CO'}^r /\m_{\CO' }^{r+1}\ar[r]&\CO'/\m_{\CO'}^{r+1}\ar[r]&\CO' /\m_{\CO'}^{r}\ar[r] & 0}\]
	  and the isomorphism \eqref{eq:varphirr+1iso} imply that
 the  morphism \[\varphi^*_r: \overline\varphi^{-1}(\CO/\m^{r}_{\CO })\rightarrow \CO'/\m^{r}_{\CO'}\]
   of sheaves is an isomorphism.
   Then the lemma follows from the 
 identification \eqref{eq:o=invlim}.
\end{proof}

\vspace{3mm}

\noindent\textbf{Proof of Theorem \ref{thm:invfun}:}
By Lemma \ref{lem:dimtan}, the assumption in Theorem \ref{thm:invfun} implies that \[
\dim_b M'=\dim_{\overline \varphi(b)} M\quad\textrm{and}\quad \deg_b M'=\deg_{\overline \varphi(b)} M.
\] 
Put $n=\dim_b M'$, and $k=\deg_b M'$.	 Choose an open neighborhood  $W'$ of $b$ in $M'$, and an open subset $W$ of $M$ such that
	\[(W', \CO'|_{W'})\cong(\R^n, \CO_{\R^n}^{(k)}),\quad (W, \CO|_{W})\cong(\R^n, \CO_{\R^n}^{(k)})\]
	and $\varphi(W’)\subset W$. By 
 the inverse function theorem for smooth manifolds and Example \ref{ex:Jacobian},
 there is a neighborhood $V'\subset W'$ of $b$ in $M'$ such that
\begin{itemize}
\item $V:=\overline\varphi(V')$ is an open subset of $M'$;
\item $\underline \varphi|_{\underline{V'}}:(V',\underline{\CO'}|_{V'})\rightarrow (V,\underline{\CO}|_V)$ is a diffeomorphism; and
\item $d\varphi_{b'}:\mathrm{T}_{b'}(V')\rightarrow \mathrm{T}_{\overline\varphi(b')}(V)$ is bijective for every $b'\in V'$.
\end{itemize}
 Lemma \ref{lem:invfun} implies that $\varphi|_{V'}$ is an isomorphism, and hence the theorem follows.

\section{Constant rank theorems}\label{sec:rankthm}

In this section, we show
how the local behavior of a morphism of formal manifolds is modeled by the behavior of its rank.

\subsection{Morphisms to $N^{(k)}$} \label{subsec:mortoNk}
 Let  $n,k\in \BN$, and let $N$ be an open submanifold of $\BR^n$. 
 In this subsection we give a characterization of the set of morphisms from $M$ to $N^{(k)}$, which is a generalization of \cite[Theorem 4.9]{CSW1}.

 Write $(x_1,x_2,\dots,x_n,y_1,y_2,\dots,y_k)$ for the standard coordinate system of $N^{(k)}$.
As mentioned in the Introduction, for every morphism 
 \[\varphi=(\overline{\varphi},\varphi^*):\  (M,\CO)\rightarrow (N,\CO_N^{(k)}),\] 
   set 
 \[c_\varphi:=(\varphi^*(x_1),\varphi^*(x_2),\dots,\varphi^*(x_n),\varphi^*(y_1),\varphi^*(y_2),\dots,\varphi^*(y_k))\in (\CO(M))^{n+k}.\] 
It is easy to see that 
\[c_\varphi\in \CO(M;N)\times (\m_\CO(M))^k,\]
where  $\CO(M;N)$ is the subset 
\begin{eqnarray*}
\{(f_1,f_2,\dots,f_n)\in (\CO(M))^n\mid 
(f_1(a),f_2(a),\dots,f_n(a))\in N\ \text{for all}\ a\in M\}
\end{eqnarray*} 
of $(\CO(M))^n$, 
and $\m_\CO(M)$ is the ideal of $\CO(M)$ defined in \eqref{eq:defmo}.

The main goal of this subsection is to prove the following result. 

\begin{prpd}\label{prop:genebij}  The  map 
  \be\label{eq:genebij} \{\text{morphism from $M$ to $N^{(k)}$}\}\rightarrow \CO(M;N)\times (\m_{\CO}(M))^k,\quad \varphi\mapsto c_\varphi\ee is a bijection.  
\end{prpd}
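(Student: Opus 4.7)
The plan is to prove injectivity and surjectivity of \eqref{eq:genebij} separately, leveraging the $k=0$ case of the claim, which is recorded as \cite[Theorem 4.9]{CSW1}, together with the inverse-limit identification $\CO=\varprojlim_r \CO/\m_\CO^r$ of Proposition \ref{prop:moge}.

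\textbf{Injectivity.} The inclusion $\RC^\infty(W)\hookrightarrow \RC^\infty(W)[[y_1,\dots,y_k]]$ of constant power series yields a morphism of locally ringed spaces $\tau:N^{(k)}\to N$. Given two morphisms $\varphi,\varphi':M\to N^{(k)}$ with $c_\varphi=c_{\varphi'}$, the compositions $\tau\circ\varphi$ and $\tau\circ\varphi'$ share the $n$-tuple $(f_1,\dots,f_n):=(\varphi^*(x_1),\dots,\varphi^*(x_n))$, so \cite[Theorem 4.9]{CSW1} forces $\tau\circ\varphi=\tau\circ\varphi'$. In particular $\overline{\varphi}=\overline{\varphi'}$, and $\varphi^*_W$ and $\varphi'^*_W$ agree on the subalgebra $\RC^\infty(W)\subset \CO_N^{(k)}(W)$ for every open $W\subset N$. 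Since $g_j:=\varphi^*(y_j)=\varphi'^*(y_j)\in \m_\CO$, each $g^J$ lies in $\m_\CO^{|J|}$, so for any $F=\sum_J f_J y^J\in \CO_N^{(k)}(W)$ only finitely many terms of $\sum_J \varphi^*_W(f_J)\cdot g^J$ contribute modulo $\m_\CO^r$. Proposition \ref{prop:moge} then forces
\[
\varphi^*_W(F)=\sum_J \varphi^*_W(f_J)\cdot g^J=\sum_J \varphi'^*_W(f_J)\cdot g^J=\varphi'^*_W(F).
\]

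\textbf{Surjectivity.} Given $(f_1,\dots,f_n,g_1,\dots,g_k)\in \CO(M;N)\times(\m_\CO(M))^k$, apply \cite[Theorem 4.9]{CSW1} to obtain a morphism $\psi=(\overline{\psi},\psi^*):M\to N$ with $\psi^*(x_i)=f_i$. Set $\overline{\varphi}:=\overline{\psi}$, and for every open $W\subset N$ and every $F=\sum_J f_J y^J\in \CO_N^{(k)}(W)$, \emph{define}
\[
\varphi^*_W(F):=\sum_J \psi^*_W(f_J)\cdot \bigl(g^J|_{\overline{\varphi}^{-1}(W)}\bigr).
\]
Because $g^J\in\m_\CO^{|J|}$, the partial sums stabilize in each quotient $\CO/\m_\CO^r$, and Proposition \ref{prop:moge} delivers a well-defined element of $\CO(\overline{\varphi}^{-1}(W))$. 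Multiplicativity reduces to the formal identity for products of power series combined with $\m_\CO^a\cdot\m_\CO^b\subset\m_\CO^{a+b}$; compatibility with restrictions across nested opens follows from that of $\psi^*$ and of $g^J$, producing a sheaf homomorphism $\varphi^*:\overline{\varphi}^{-1}\CO_N^{(k)}\to\CO$. The condition $g_j\in\m_\CO$ ensures that $\varphi^*$ carries each stalk maximal ideal into the stalk maximal ideal, so $\varphi=(\overline{\varphi},\varphi^*)$ is a morphism of formal manifolds; by construction $\varphi^*(x_i)=f_i$ and $\varphi^*(y_j)=g_j$, giving $c_\varphi=(f_1,\dots,f_n,g_1,\dots,g_k)$.

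The main obstacle is ensuring that the formal-series formula in the surjectivity step defines a bona fide sheaf homomorphism of $\BC$-algebras rather than just a set-theoretic map on global sections; this is resolved uniformly by Proposition \ref{prop:moge}, which turns the $\m_\CO$-adic formal sum into a coherent inverse-limit construction that commutes both with restriction to open subsets and with multiplication, and which also justifies the truncation-based identification used for injectivity.
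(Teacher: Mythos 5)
Your proof is correct, but it assembles the morphism by a genuinely different route from the paper. The paper reduces everything to two previously handled special cases via the product decomposition $N^{(k)}=N^{(0)}\times((\BR^0)^{(1)})^{k}$: the case $k=0$ is \cite[Theorem 4.9]{CSW1} (plus the same openness-of-$N$ observation you use), and the case $N=\BR^0$, $k=1$ is handled by the single assignment $\sum_j c_j y_1^j\mapsto\sum_j c_j g^j$, where no sheaf-theoretic issues arise because the source of the pullback is just $\BC[[y_1]]$. Your surjectivity argument instead writes down the full sheaf homomorphism $\varphi^*_W\bigl(\sum_J f_J y^J\bigr)=\sum_J\psi^*_W(f_J)\,g^J$ on every open $W\subset N$ and verifies directly, via Proposition \ref{prop:moge}, that these $\m_\CO$-adically convergent sums define a compatible family of $\BC$-algebra homomorphisms. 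This costs you the (routine but real) checks of multiplicativity and restriction-compatibility, which the paper outsources to the universal property of the product decomposition; in exchange you avoid invoking that universal property and obtain an explicit closed formula for $\varphi^*$. Two small points are worth making explicit: in the injectivity step, before Proposition \ref{prop:moge} can be applied you need the observation that $\varphi^*$ maps $\m^r_{\CO_N^{(k)}}(W)$ into $\m^r_\CO(\overline\varphi^{-1}(W))$ (immediate from the locality of the stalk homomorphisms), so that $\varphi^*_W(F)$ and the truncated sums agree modulo each $\m_\CO^r$; and \cite[Theorem 4.9]{CSW1} is stated for $\BR^n$, so as in the paper one should first produce a morphism to $(\BR^n)^{(0)}$ and then use openness of $N$ to land in $N^{(0)}$.
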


When $N=\BR^n$ and $k=0$, this proposition has been proved in  \cite[Theorem 4.9]{CSW1}. Similar to the proof of
\cite[Lemma 4.10]{CSW1}, 
 the following result is straightforward.
\begin{lemd}\label{lem:mortoformaleq}
Let  $\varphi_1,\varphi_2:  M\rightarrow N^{(k)}$ be two morphisms such that  $c_{\varphi_1}=c_{\varphi_2}$. Then we have that 
    $\varphi_1=\varphi_2$.
\end{lemd}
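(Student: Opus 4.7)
The plan is to show that a morphism $\varphi=(\overline\varphi,\varphi^*):M\to N^{(k)}$ is determined by the tuple $c_\varphi$ in two stages: first recover $\overline\varphi$, then recover the sheaf homomorphism $\varphi^*$. For the first stage, note that for each $a\in M$ the stalk map $\varphi^*_a$ is a local homomorphism, so $\varphi^*(x_i)(a)=x_i(\overline\varphi(a))$ for $i=1,\dots,n$; since $x_1,\dots,x_n$ separate points of $N\subset\BR^n$, the coincidence $c_{\varphi_1}=c_{\varphi_2}$ forces $\overline{\varphi_1}=\overline{\varphi_2}$, and we denote this common continuous map by $\overline\varphi$.

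For the second stage, I would argue that for every open $U\subset N$ and every open $V'\subset \overline\varphi^{-1}(U)$ in $M$, the two induced $\BC$-algebra maps $(\varphi_i^*)_{U,V'}:\CO_{N^{(k)}}(U)\to\CO(V')$ agree. The key is to exploit Proposition \ref{prop:moge}, which gives the identification $\CO=\varprojlim_{r}\CO/\m_\CO^r$. Since $\varphi^*$ is local, it sends $\m_{\CO_{N^{(k)}}}^r$ into $\m_\CO^r$, so it suffices to show that $\varphi_1^*$ and $\varphi_2^*$ agree on $\CO_{N^{(k)}}(U)$ modulo $\m_\CO^r(V')$ for every $r\in\BN$. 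Given $g=\sum_{J\in\BN^k}f_J y^J\in\CO_{N^{(k)}}(U)$ with $f_J\in\RC^\infty(U)$, the tail $g-\sum_{|J|<r}f_J y^J$ lies in $\m_{\CO_{N^{(k)}}}^r(U)$ by Example \ref{ex:mor}, hence
\[
\varphi_i^*(g)\;\equiv\;\sum_{|J|<r}\varphi_i^*(f_J)\cdot\varphi_i^*(y_1)^{j_1}\cdots\varphi_i^*(y_k)^{j_k}\pmod{\m_\CO^r(V')}.
\]
Because $\varphi_1^*(y_j)=\varphi_2^*(y_j)$ by hypothesis, this reduction depends only on the values of $\varphi_i^*$ on the subalgebra $\RC^\infty(U)\subset\CO_{N^{(k)}}(U)$.

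It remains to check that $\varphi_1^*$ and $\varphi_2^*$ coincide on $\RC^\infty(U)$. Composing with the obvious reduction morphism $N^{(k)}\to N^{(0)}=N$, one obtains morphisms $M\to N$ of formal manifolds whose pullback homomorphisms on smooth functions are precisely the restrictions of $\varphi_i^*$ to $\RC^\infty(U)$, and whose $n$-tuples of coordinate pullbacks are $(\varphi_i^*(x_1),\dots,\varphi_i^*(x_n))$, which agree. Applying the already-established case $k=0$ of Proposition \ref{prop:genebij} (namely \cite[Theorem 4.9]{CSW1}) closes the argument. The only nonroutine point is the interchange of $\varphi^*$ with the infinite power series, and this is handled cleanly by the $\m_\CO$-adic completeness of Proposition \ref{prop:moge}; I expect this to be the main (though mild) obstacle, since without it the multiplicativity and continuity arguments would need to be justified directly on infinite sums rather than on finite truncations.
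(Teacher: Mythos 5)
Your proof is correct and is essentially the argument the paper has in mind (the paper merely points to \cite[Lemma 4.10]{CSW1} and calls the statement straightforward): recover $\overline\varphi$ from the values $\varphi^*(x_i)(a)=x_i(\overline\varphi(a))$, then use the $\m_\CO$-adic completeness of Proposition \ref{prop:moge} to reduce to finite truncations, where everything is forced by multiplicativity, the equality $\varphi_1^*(y_j)=\varphi_2^*(y_j)$, and the $k=0$ uniqueness. One cosmetic remark: the morphism $N^{(k)}\to N^{(0)}$ you invoke is not the reduction morphism (which goes the other way, $\underline{N^{(k)}}=N^{(0)}\to N^{(k)}$) but the projection whose pullback is the inclusion $\RC^\infty(U)\hookrightarrow\RC^\infty(U)[[y_1,\dots,y_k]]$; with that reading, and with the trivial reduction of the $k=0$ case from general open $N$ to $\BR^n$ via the open inclusion, the argument goes through.
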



\begin{lemd}\label{lem:mortoiformal} 
Proposition \ref{prop:genebij} holds when  $N=\R^{0}$ and $k=1$. 
\end{lemd}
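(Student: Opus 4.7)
The plan is to recognize the statement as essentially asserting that $\CO(M)$ is complete in the $\m_\CO(M)$-adic topology, as packaged by Proposition \ref{prop:moge}. Since $N=\R^0$ is a single point, the underlying continuous map $\overline{\varphi}: M\to \R^0$ is uniquely determined, and $\overline\varphi^{-1}\CO_{\R^0}^{(1)}$ is the constant sheaf of $\C$-algebras with value $\C[[y]]$. By the universal property of the constant sheaf, together with the requirement that $\varphi^*$ induce local homomorphisms on stalks, giving a morphism $\varphi:M\to (\R^0)^{(1)}$ amounts to giving a $\C$-algebra homomorphism $\phi:\C[[y]]\to \CO(M)$ satisfying $\phi(y)\in \m_\CO(M)$; under this correspondence $c_\varphi$ is just $\phi(y)$. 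The identification $\CO(M;\R^0)\times (\m_\CO(M))^1\cong \m_\CO(M)$ (the first factor being a singleton) then reduces the proposition to a bijection between such algebra homomorphisms and elements of $\m_\CO(M)$.

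For injectivity, any $\phi$ as above is determined by $\phi(y)$ by Lemma \ref{lem:mortoformaleq} (or, at the level of algebra homomorphisms, by continuity of $\phi$ with respect to the $(y)$-adic and $\m_\CO(M)$-adic topologies). For surjectivity, given $f\in \m_\CO(M)$ I would define
\[
\phi_f\!\left(\sum_{i\ge 0}c_i y^i\right):=\sum_{i\ge 0}c_i f^i,\qquad c_i\in\C,
\]
and similarly define on each open $U\subset M$ the homomorphism $\phi_{f,U}:\C[[y]]\to \CO(U)$ by $y\mapsto f|_U$, extended by the same formula. The stalk map at each $a\in M$ sends $y$ to $f_a\in \m_a$, hence is local, and compatibility with restrictions is automatic from the uniform formula, so this assembles into a sheaf morphism $\overline{\varphi_f}^{-1}\CO_{\R^0}^{(1)}\to \CO$ and thereby a morphism $\varphi_f:M\to (\R^0)^{(1)}$ with $c_{\varphi_f}=f$.

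The key technical step, and the main obstacle, is to give meaning to the infinite sum $\sum c_i f^i$ as an actual element of $\CO(U)$ for each open $U\subset M$. Here I would invoke Proposition \ref{prop:moge}: the isomorphism $\CO\xrightarrow{\sim}\varprojlim_{r\in\BN}\CO/\m_\CO^r$ together with the softness of the $\m_\CO^r$ (as used in its proof) yields, for every open $U\subset M$, the identification
\[
\CO(U)=\varprojlim_{r\in\BN}\CO(U)/\m_\CO^r(U),
\]
i.e., $\CO(U)$ is complete and Hausdorff in the $\m_\CO(U)$-adic topology. Since $f|_U\in \m_\CO(U)$ implies $(f|_U)^i\in \m_\CO^i(U)$, the partial sums $\sum_{i<r}c_i(f|_U)^i$ form a Cauchy sequence and converge to a unique element of $\CO(U)$; this is the value of $\phi_{f,U}$. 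Multiplicativity of $\phi_{f,U}$ reduces via this $\m_\CO$-adic continuity to the trivial case of polynomials in $y$, and compatibility with restrictions is immediate. This produces the desired pre-image and finishes the proof.
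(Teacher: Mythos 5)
Your proposal is correct and follows essentially the same route as the paper: injectivity via Lemma \ref{lem:mortoformaleq}, and surjectivity by sending $\sum_i c_i y^i$ to $\sum_i c_i f^i$ for a given $f\in\m_\CO(M)$. The paper merely asserts that this sum is well-defined because $f\in\m_\CO(M)$; your appeal to Proposition \ref{prop:moge} (i.e.\ $\m_\CO$-adic completeness of $\CO(U)$) is exactly the justification left implicit there.
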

\begin{proof}
    By Lemma \ref{lem:mortoformaleq}, it suffices to show that the map \eqref{eq:genebij} is surjective when $N=\R^{0}$ and $k=1$. 
    Let $g\in\m_{\CO}(M)$. 
     Then we have  a morphism $\varphi=(\overline\varphi,\varphi^*): M\rightarrow (\BR^0)^{(1)}$ such that \[ \varphi^*: \ 
        \BC[[y_1]] \rightarrow  
         \CO(M),\qquad 
         \sum_{j\in \BN} c_jy_1^j\mapsto \sum_{j\in \BN} c_jg^j.
    \] 
 Note that the summation $\sum_{j\in \BN} c_jg^j$  is well-defined as  $g\in\m_{\CO}(M)$.
   It is obvious that $c_{\varphi}=g$. Then the lemma follows.
\end{proof}

\noindent\textbf{Proof of Proposition \ref{prop:genebij}:}
Note that \[N^{(k)}=N^{(0)}\times\underbrace{(\BR^0)^{(1)}
\times\dots \times (\BR^0)^{(1)}}_k. \]
Then, by considering Lemma \ref{lem:mortoiformal}, the proposition is reduced to the case when $k=0$. In such case, by Lemma \ref{lem:mortoformaleq}, it suffices to prove that the map \eqref{eq:genebij} is surjective.
   
     Let $(f_1,f_2,\dots, f_n)$ be an $n$-tuple in $\CO(M;N)$. 
    Write $x'_1, x_2',\dots,x_n'$ for the standard coordinate functions of $(\BR^n)^{(0)}$.
    By \cite[Theorem 4.9]{CSW1}, there is a unique morphism $\phi=(\overline\phi, \phi^*): M\rightarrow (\BR^n)^{(0)}$ of formal manifolds such that 
    \[
    (\phi^*(x_1'),\phi^*(x_2'),\dots,\phi^*(x_n'))=(f_1,f_2,\dots, f_n).
    \] Note that for every $a\in M$, we have  that \begin{eqnarray*}
         \overline{\phi}(a)&=&((\phi^*(x'_1))(a),(\phi^*(x'_2))(a),\dots,(\phi^*(x'_n))(a))
    \\&=&(f_1(a),f_2(a),\dots,f_n(a))\in N.   
    \end{eqnarray*}
This gives that $\overline{\phi}(M)\subset N$. Since $N$ is open in $\BR^n$,   there is a natural morphism $\varphi=(\overline\varphi, \varphi^*) :M\rightarrow N^{(0)}$ induced by $\phi$. Moreover, we have  that \[\varphi^*(x_i)=\phi^*(x'_i)=f_i\] for every $i=1,2,\dots,n$, and hence  $c_{\varphi}=(f_1,f_2,\dots,f_n)$. This implies that \eqref{eq:genebij} is surjective when $k=0$, as desired.

\subsection{Constant rank morphisms}\label{subsec:constantrankmor}
In this subsection, we give a version of constant rank theorem for formal manifolds.   
We start with the following definition.

\begin{dfn}\label{de:constantrankmor}Let 
$\varphi=(\overline{\varphi},\varphi^*):(M',\CO')\rightarrow (M,\CO)$ be a morphism of formal manifolds,  $b\in M'$, and  $\bm{r}\in \BN^3$.

\noindent (a) We say that $\varphi$ has constant rank   near $b$ if  there is an open neighborhood $U'$ of $b$ in $M'$  such that $\mathrm{Rank}_{b'}(\varphi)=\mathrm{Rank}_b(\varphi)$  for all $b'\in U'$. In this case, we also say that $\varphi$ has constant rank $\bm{r}$  near $b$ if $\mathrm{Rank}_b(\varphi)=\bm{r}$.

 \noindent   (b) We  say that $\varphi$ has  constant rank $\bm{r}$ if $\mathrm{Rank}_{b'}(\varphi)=\mathbf{r}$ for all $b'\in M'$.

\noindent (c) We say that $\varphi$ has locally constant rank  if for every $b'\in M'$, it has constant rank  near $b'$.

\end{dfn} 

In what follows, we present some examples of constant rank morphisms of formal manifolds.

\begin{exampled}\label{ex:constantrank}
Let $n,k,n',k',r_1,r_2,r_3\in \BN$ with
    \[
n'\ge r_1,\quad n\ge r_1+r_2,\quad k'\ge r_2+r_3,\quad\text{and}\quad k\ge r_3.
\]  
Let $N'\subset \BR^{n'}$ and $N\subset \BR^n$ be open subsets  such that  $p_{r_1}(N')\subset N$, where  
\[
p_{r_1}:\ \R^{n'}\rightarrow \R^n,\quad (a_1,a_2,\dots,a_{n'})\mapsto (a_1,a_2,\dots,a_{r_1},\underbrace{0,\dots,0}_{n-r_1}).
\] Write  $(x_1,x_2,\dots,x_n,y_1,y_2,\dots,y_k)$ and $(u_1,u_2,\dots,u_{n'},z_1,z_2,\dots,z_{k'})$ for the standard coordinate systems of $N^{{(k)}}$ and $(N')^{(k')}$, respectively.
Define  $\varphi: (N')^{(k')}\rightarrow N^{(k)}$ to be the morphism  given by 
\begin{eqnarray}\label{eq:defconsrankmor} 
 && (x_1,\dots,x_{r_1},x_{r_1+1},\dots,x_{n-r_2},x_{n-r_2+1},\dots,x_{n},y_1,\dots,y_{r_3},y_{r_3+1},\dots,y_k)\\  
\notag &\mapsto&
 (u_1,\dots,u_{r_1},g_1,\dots,g_{n-r_1-r_2},z_1,\dots,z_{r_2},z_{r_2+1},\dots,z_{r_2+r_3},h_{1},\dots,h_{k-r_3}),
\end{eqnarray}
where
\begin{eqnarray}
\label{eq:defgi}g_i&=&\sum_{j'=1}^{r_2+r_3}g_{ij'}z_{j'}+\sum_{J\in \BN^{k'}; |J|\ge 2} g_{iJ}z^J\quad (i=1,2,\dots,n-r_1-r_2),\\
\label{eq:defhj}h_j&=&\sum_{j'=r_2+1}^{r_2+r_3}h_{jj'}z_{j'}
+\sum_{J\in \BN^{k'}; |J|\ge 2} h_{jJ}z^J\quad (j=1,2,\dots,k-r_3)
\end{eqnarray}
for some $g_{ij'},g_{iJ},h_{jj'},h_{jJ}\in \RC^\infty(N')$.
It is routine that the morphism $\varphi$ has constant rank 
\[(r_1+r_2+r_3,r_1,r_3).\]
\end{exampled}

The following result shows that every constant rank morphism of formal manifolds is locally modeled by  some morphism defined above.

\begin{thmd}\label{thm:rankthm}
Let $\varphi=(\overline\varphi,\varphi^*):(M',\CO')\rightarrow (M,\CO)$ be a morphism of formal manifolds, and let $b\in M'$.
    Assume that $\varphi$ has constant rank $(r_1+r_2+r_3,r_1,r_3)$ near $b$ for some $r_1,r_2,r_3\in \BN$. 
Then 
 \[
n'\ge r_1,\quad n\ge r_1+r_2,\quad k'\ge r_2+r_3,\quad\text{and}\quad k\ge r_3,
\]
where
\[n':=\dim_{b} M',\quad k':=\deg_{b} M',\quad n:=\dim_{\overline{\varphi}(b)} M,\quad \text{and}\quad k:=\deg_{\overline{\varphi}(b)} M.\]
Furthermore, there exists a chart $(U',(N')^{(k')},\vartheta')$ of $M'$ containing $b$ and a chart $(U,N^{(k)},\vartheta)$ of $M$ containing $\overline{\varphi}(b)$ such that $\overline{\varphi}(U')\subset U$ and the morphism 
\be\label{eq:localrepnvarphi}
 \vartheta\circ   \varphi|_{U'}\circ (\vartheta')^{-1}:   \        (N')^{(k')}\rightarrow N^{(k)}
\ee
is given by  \eqref{eq:defconsrankmor} for some suitable $g_i,h_j$ as in \eqref{eq:defgi}, \eqref{eq:defhj}.

\end{thmd}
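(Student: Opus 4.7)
The plan is to adapt the classical proof of the constant rank theorem for smooth manifolds: reduce the smooth and formal parts separately by preliminary normalizations, and then apply Theorem~\ref{thm:invfun} once to absorb the remaining data. First I would derive the dimension inequalities. By Lemma~\ref{lem:dimtan}, $\dim\mathrm{T}_b(\underline{M'})=n'$, $\dim\mathrm{T}_{\overline\varphi(b)}(\underline{M})=n$, $\dim\mathrm{T}_b^{\mathrm{f}}(M')=k'$, and $\dim\mathrm{T}_{\overline\varphi(b)}^{\mathrm{f}}(M)=k$. Writing $d\varphi_b$ in block form $\begin{pmatrix} A & B \\ 0 & D \end{pmatrix}$ with $A=d\underline\varphi_b$ of rank $r_1$ and $D=d\varphi_b^{\mathrm{f}}$ of rank $r_3$, one computes $\mathrm{Rank}(d\varphi_b)=r_3+\dim\bigl(A(\mathrm{T}_b(\underline{M'}))+B(\ker D)\bigr)$, which gives $r_1+r_2\le n$ (the first summand sits in $\mathrm{T}(\underline{M})$ of dimension $n$) and $r_2\le\dim\ker D=k'-r_3$, yielding all four inequalities $r_1\le n,n'$; $r_3\le k,k'$; $n\ge r_1+r_2$; $k'\ge r_2+r_3$.

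Second, I would fix local charts around $b$ and $\overline\varphi(b)$ realizing $\varphi$ as a morphism $(N')^{(k')}\to N^{(k)}$ with both basepoints corresponding to $0$. Since $\underline\varphi$ has constant rank $r_1$ near $b$, the classical constant rank theorem for smooth manifolds (\cite[Theorem 4.12]{L}) furnishes further smooth chart changes after which $\underline\varphi(u_1,\dots,u_{n'})=(u_1,\dots,u_{r_1},0,\dots,0)$; in particular $\varphi^*(x_i)-u_i\in\m_{\CO'}$ for $i\le r_1$ and $\varphi^*(x_i)\in\m_{\CO'}$ for $i>r_1$. The matrix $H$ of Example~\ref{ex:Jacobian}, having constant rank $r_3$ near $b$, admits a smooth Gauss--Jordan reduction: invertible smooth matrix-valued functions $P(u)$ and $Q(u)$ near $b$ conjugate $H$ to a constant normal form. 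Applying $Q$ to the source $z$-coordinates and $P^{-1}$ to the target $y$-coordinates (both valid automorphisms by Proposition~\ref{prop:genebij}), together with a linear normalization of the mixed block $G$ and suitable permutations, we may assume that $J_b$ is normalized so that the first $r_1$ $x$-rows, the last $r_2$ $x$-rows, and the first $r_3$ $y$-rows produce the standard basis vectors $e_i^u$ ($i\le r_1$), $e_l^z$ ($l\le r_2$), $e_{r_2+j}^z$ ($j\le r_3$) respectively, with all other rows vanishing at $b$.

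Third, by Proposition~\ref{prop:genebij}, define on a small open neighborhood $V'$ of $b$ a morphism $\psi\colon V'\to(N')^{(k')}$ by
\[
\psi^*(u_i)=\varphi^*(x_i)\ (i\le r_1),\qquad \psi^*(u_i)=u_i\ (i>r_1),
\]
\[
\psi^*(z_l)=\varphi^*(x_{n-r_2+l})\ (l\le r_2),\quad \psi^*(z_{r_2+j})=\varphi^*(y_j)\ (j\le r_3),\quad \psi^*(z_l)=z_l\ (l>r_2+r_3).
\]
This is well-defined: the smooth normalization ensures $\varphi^*(x_{n-r_2+l})\in\m_{\CO'}$ (since $n-r_2+l>r_1$), while $\varphi^*(y_j)\in\m_{\CO'}$ automatically. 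A direct computation using the normalized $J_b$ shows that the Jacobian of $\psi$ at $b$ is the identity matrix, so that both $d\psi_b$ and $d\underline\psi_b$ are bijective; Theorem~\ref{thm:invfun} then implies $\psi$ is an isomorphism onto its image after shrinking $V'$. Using $\psi$ as the new source chart, the morphism $\varphi$ now satisfies $\varphi^*(x_i)=u_i$ for $i\le r_1$, $\varphi^*(x_{n-r_2+l})=z_l$ for $1\le l\le r_2$, and $\varphi^*(y_j)=z_{r_2+j}$ for $j\le r_3$, while the remaining components are formal series $g_{i-r_1}(u,z)$ and $h_{j-r_3}(u,z)$ whose constant-in-$z$ terms vanish (the $g$'s by the smooth normalization, the $h$'s automatically).

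Finally, I would use the constant rank assumption \emph{throughout} a neighborhood of $b$ (ranks being invariant under source and target isomorphisms) to pin down the linear-in-$z$ parts of the $g_l$ and $h_l$. Since $\mathrm{Rank}(H)=r_3$ near $b$ and the first $r_3$ rows of $H$ are the constant vectors $e_{r_2+1},\dots,e_{r_2+r_3}$, every remaining row of $H$ must lie in their span, forcing the linear-in-$z$ part of each $h_l$ to involve only $z_{r_2+1},\dots,z_{r_2+r_3}$; this yields \eqref{eq:defhj}. Analogously, $\mathrm{Rank}(J)=r_1+r_2+r_3$ together with the fact that the prescribed rows span $\mathrm{span}(e_1^u,\dots,e_{r_1}^u,e_1^z,\dots,e_{r_2+r_3}^z)$ forces the $x$-rows of $J$ for $r_1<i\le n-r_2$ (which lie entirely in the $z$-block) to be supported in columns $1,\dots,r_2+r_3$, yielding \eqref{eq:defgi}. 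Higher-order $z$-terms are unconstrained, since they do not enter $J$. I expect the main technical obstacle to be the preliminary neighborhood normalization of the matrix $H$ via a smooth Gauss--Jordan reduction, together with the compatible choice of source coordinates in $\psi$ that simultaneously absorbs the smooth rank-$r_1$ piece and the formal rank-$r_3$ piece; once these are arranged, the Jacobian of $\psi$ becomes the identity and Theorem~\ref{thm:invfun} closes the argument.
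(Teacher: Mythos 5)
Your proposal is correct and follows essentially the same route as the paper: normalize the reduction via the smooth constant rank theorem, build the auxiliary source automorphism $\psi$ that absorbs $\varphi^*(x_1),\dots,\varphi^*(x_{r_1})$, $\varphi^*(x_{n-r_2+1}),\dots,\varphi^*(x_n)$, $\varphi^*(y_1),\dots,\varphi^*(y_{r_3})$, invert it by Theorem~\ref{thm:invfun}, and use the constant-rank hypothesis to kill the unwanted linear-in-$z$ coefficients. The only (harmless) difference is ordering: the paper arranges the relevant minors of $J$ to be invertible on a whole neighborhood and deduces the vanishing conditions \eqref{eq:gij=0}--\eqref{eq:hij=0} before applying $\psi$, whereas you normalize $J_b$ at the single point, apply $\psi$ first, and then read off the vanishing from the rank conditions in the new coordinates where the distinguished rows are literal standard basis vectors.
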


\begin{proof} The first assertion is obvious. For the second one, without loss of generality, we  assume that the morphism 
 \[\varphi:\ M'=(N')^{(k')}\rightarrow M=N^{(k)}\]
is  as in Example \ref{ex:Jacobian}, and has constant rank $(r_1+r_2+r_3,r_1,r_3)$.
With the notations as in Example \ref{ex:Jacobian}, by recording the coordinates, we may (and do) assume that the determinants of  the minors 
    \[
 \left((\partial_{u_{i'}}(f_i))(b')\right)_{\substack{1\le i\le r_1,\\1\le i'\le r_1}},\  \left(h_{jj'}(b')\right)_{\substack{1\le j\le r_3,\\r_2+1\le j'\le r_2+r_3}},
 \ \text{and}\ \left(\begin{array}{cc}
   g_{ij'}(b')   \\
    h_{jj'}(b')
 \end{array}\right)_{\substack{n-r_2+1\le i\le n,\\ 1\le j\le r_3,\\
 1\le j'\le r_2+r_3}}
    \]
 in $J_{b'}$   are nonzero for all $b'\in N'$. 
By the constant rank theorem for smooth manifolds (see \cite[Theorem 4.12]{L}), we may (and do) further assume that the reduction 
 $\underline{\varphi}: N'\rightarrow N$ of $\varphi$ is given by 
 \begin{eqnarray}
\label{eq:smoothconsrankthm}  (x_1,x_2,\dots,x_n)\mapsto (u_1,u_2,\dots,u_{r_1},\underbrace{0,\dots,0}_{n-r_1}).   
 \end{eqnarray}
Then it is straightforward to see that 
 \begin{eqnarray}
 \label{eq:gij=0} &&   g_{ij'}=0\quad\text{for}\ i=r_1+1,\dots,n-r_2,\ j'=r_2+r_3+1,\dots,k',\\
\label{eq:hij=0} && h_{jj'}=0\quad\text{for}\ j=r_3+1,\dots,k,\ j'=1,\dots,r_2, r_2+r_3+1,\dots,k'. 
 \end{eqnarray}

By \eqref{eq:smoothconsrankthm}, we have that 
\[
\varphi^*(x_{n-r_2+1}),\varphi^*(x_{n-r_2+2}),\dots,\varphi^*(x_{n})\in \m_{\CO'}(M').
\]
It then follows from Proposition \ref{prop:genebij} that there is a morphism $\psi: (N')^{(k')}\rightarrow (N')^{(k')}$  given by 
\begin{eqnarray*}&&(u_1,\dots,u_{r_1},u_{r_1+1},\dots,u_{n'},z_1,\dots,z_{r_2},z_{r_2+1},\dots,z_{r_2+r_3},z_{r_2+r_3+1},\dots,z_{k'})\\&\mapsto&(\varphi^*(x_1),\dots,\varphi^*(x_{r_1}),u_{r_1+1},\dots,u_{n'},\\&&\varphi^*(x_{n-r_2+1}),\dots,\varphi^*(x_{n}),\varphi^*(y_1),\dots,\varphi^*(y_{r_3}),z_{r_2+r_3+1},\dots,z_{k'}).  \end{eqnarray*}   
Since $d\psi_{b'}$ is bijective for every $b'\in N$,   $\psi$ is an isomorphism by Lemma \ref{lem:invfun}. 
Using \eqref{eq:gij=0} and \eqref{eq:hij=0}, one easily checks that  the morphism $\varphi\circ \psi^{-1}$ has the desired form (which is given by  \eqref{eq:defconsrankmor}).
\end{proof}

\subsection{Proof of Theorem \ref{thm:constantrankthmmain}} \label{subsec:rankthm}
This subsection is devoted to a proof of Theorem \ref{thm:constantrankthmmain}. 
Let $\varphi=(\overline\varphi, \varphi^*): (M',\CO')\rightarrow (M,\CO)$ be a morphism of formal manifolds, and let $b\in M'$ as in Theorem \ref{thm:constantrankthmmain}.

We first give an explicit construction of the morphism given by \eqref{eq:introstrandard}. 

\begin{exampled}\label{ex:closedsub} 
In the setting of Example \ref{ex:constantrank}, assume that $g_i$ and $h_j$ are zero for all $i=1,2,\dots,n-r_1-r_2$ and $j=1,2,\dots,k-r_3$. In this case, write \be\label{eq:defzetar}
\zeta_{\bm{r}}=(\overline{\zeta_{\bm{r}}},\zeta_{\bm{r}}^*):\ (N')^{(k')}\rightarrow N^{(k)}\qquad (\bm{r}:=(r_1+r_2+r_3,r_1,r_3))
\ee for the morphism given by \eqref{eq:defconsrankmor} (see also \eqref{eq:introstrandard}).
Then $\overline{\zeta}_{\bm{r}}=p_{r_1}|_{N'}$ and for every  $f=\sum_{J\in \BN^k} f_J y^J\in \CO_N^{(k)}(U)$,
\be\label{eq:zetafstandard}
(\zeta_{\bm{r}})_{U,U'}^*(f)
=\sum_{\substack{I=(i_1,i_2,\dots,i_{r_2})\in \BN^{r_2},\\J=(j_1,j_2,\dots,j_{r_3})\in \BN^{r_3}}}
(f_{(j_1,j_2,\dots,j_{r_3},\underbrace{0,\dots,0}_{k-r_3})})^{(I)}z^{(I,J)},
\ee
where $U$ is an open subset of $N$, $U'$ is an open subset of $N'$ such that $p_{r_1}(U')\subset U$, 
\[
z^{(I,J)}:=z_1^{i_1}z_2^{i_2}\cdots z_{r_2}^{i_{r_2}}z_{r_2+1}^{j_{1}}z_{r_2+2}^{j_{2}}\cdots 
z_{r_2+r_3}^{j_{r_3}},
\]
and for every $h\in \RC^{\infty}(U)$, 
\[
h^{(I)}:=\left(
\frac{1}{i_1!i_2!\cdots i_{r_2}!}
(\partial_{x_{n-r_2+1}}^{i_1}\partial_{x_{n-r_2+2}}^{i_2}\cdots
\partial_{x_{n}}^{i_{r_2}})(h)
\right)\circ (p_{r_1}|_{U'})\in \RC^\infty(U').
\]
\end{exampled} 

By generalizing the notion of a slice in an open subset of $\BR^n$, we introduce the following definition.  

\begin{dfn}\label{def:defzetar}
Let $n,n',k,k',r\in \BN$ with  \[  
r\le n-n',\quad r\le k',\quad\text{and}\quad k'-r\le k,\] and let $N$ be an open subset of $\BR^n$ containing the origin.  We call $((N')^{(k')},\zeta_{\bm r})$ the  $(n',r,k')$-slice of $N^{(k)}$, where 
\be\label{eq:N'} N'=\{(a_1,a_2,\dots,a_n)\in N\mid a_{i}=0\ 
\text{ for every } i> n'\},\ee $\bm r=(n'+k',n',k'-r)$,
and
\be\label{eq:defzetar*}
\zeta_{\bm r}=\zeta_{(n'+k',n',k'-r)}:\ (N')^{(k')}\rightarrow N^{(k)}\ee  is as in \eqref{eq:defzetar}.
\end{dfn}

By  Borel's lemma,  \eqref{eq:zetafstandard} implies the following result.

\begin{lemd}\label{lem:zetarsur}
Let $\zeta_{\bm r}=(\overline{\zeta_{\bm r}},\zeta_{\bm r}^*):(N')^{(k')}\rightarrow N^{(k)}$ be the morphism as in \eqref{eq:defzetar*}. Then the homomorphism 
\be \label{eq:zetasur} \zeta_{\bm r}^*:\ 
\RC^\infty(N)[[y_1,y_2,\dots,y_k]]\rightarrow \RC^\infty(N')[[z_1,z_2,\dots,z_{k'}]]
 \ee is surjective.
\end{lemd}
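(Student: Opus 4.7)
The plan is to read off the structure of $\zeta_{\bm r}^*$ from the explicit formula \eqref{eq:zetafstandard} and reduce surjectivity to a parametric form of Borel's lemma. Given an arbitrary target element
\[
g=\sum_{K\in\BN^{k'}} g_K z^K\in\RC^\infty(N')[[z_1,\dots,z_{k'}]],
\]
I would first use $k'=r_2+r_3$ to rewrite each multi-index as a pair $K=(I,J)$ with $I\in\BN^{r_2}$ and $J\in\BN^{r_3}$, thereby indexing the coefficients as $g_{(I,J)}\in\RC^\infty(N')$.

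Next I would build a preimage $f$ having vanishing $y^{J'}$-coefficient whenever $J'$ involves any of $y_{r_3+1},\dots,y_k$. For each $J\in\BN^{r_3}$ I only need a single smooth function $h_J\in\RC^\infty(N)$ whose scaled transverse derivatives satisfy
\[
\Bigl(\tfrac{1}{i_1!\cdots i_{r_2}!}\,\partial_{x_{n-r_2+1}}^{i_1}\cdots\partial_{x_n}^{i_{r_2}}h_J\Bigr)\circ(p_{r_1}|_{N'})=g_{(I,J)}\qquad\text{for every }I=(i_1,\dots,i_{r_2})\in\BN^{r_2}.
\]
Setting $f:=\sum_{J\in\BN^{r_3}}h_J\,y_1^{j_1}\cdots y_{r_3}^{j_{r_3}}$ will then give, by \eqref{eq:zetafstandard}, $\zeta_{\bm r}^*(f)=\sum_{I,J}(h_J)^{(I)}z^{(I,J)}=g$, as required.

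The only real content is therefore producing the functions $h_J$, which is what Borel's lemma supplies. Concretely, I would first extend each $g_{(I,J)}\in\RC^\infty(N')$ to a smooth function $\tilde g_{(I,J)}$ on an open neighborhood of $N'$ inside the intermediate slice $\{x_{n'+1}=\cdots=x_{n-r_2}=0\}\cap N$ (e.g.\ by declaring it constant in the middle coordinates $x_{n'+1},\dots,x_{n-r_2}$, or by pulling back via the projection onto the first $n'$ coordinates). Then the standard parametric Borel lemma, applied in the $r_2$ transverse directions $x_{n-r_2+1},\dots,x_n$ with the remaining coordinates as smooth parameters, produces $h_J$ locally; a partition-of-unity argument on $N$ (or composition with a cutoff function equal to $1$ near $N'$) extends $h_J$ to a smooth function on all of $N$ without disturbing the prescribed Taylor data along $N'$.

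The main obstacle is purely bookkeeping: making sure that the three-tier indexing ($r_1$ fixed coordinates, $r_2$ Taylor directions, $r_3$ surviving formal variables) is correctly aligned with the formula \eqref{eq:zetafstandard}, and that the extension-plus-Borel construction can be done globally on $N$ rather than just near a single point. Neither step is hard — Borel's lemma in parametric form is classical and the globalization is a standard softness/partition-of-unity argument — but the combinatorics of matching multi-indices $I,J$ with the coefficients on both sides is what needs to be checked carefully.
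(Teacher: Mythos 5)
Your proposal is correct and is essentially the paper's own argument: the paper proves this lemma in one line ("By Borel's lemma, \eqref{eq:zetafstandard} implies the following result"), and your write-up — reducing to prescribing, for each $J\in\BN^{r_3}$, the scaled transverse derivatives of a single coefficient $h_J=f_{(J,0)}$ via the parametric Borel lemma, then globalizing with a cutoff equal to $1$ near the closed slice $N'$ — is exactly the intended expansion of that citation. The only nitpick is the slightly garbled description of which intermediate slice you extend $g_{(I,J)}$ to before applying Borel, but any of the standard implementations (pull back along the projection onto the first $n'$ coordinates, apply Borel in the last $r_2$ directions, cut off) works.
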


The following result is straightforward. 

\begin{lemd}\label{lem:constantrankatob}
Assume that \[\varphi=\zeta_{\bm{r}}: M'=(N')^{(k')}\rightarrow M=N^{(k)}\] 
is as in Example \ref{ex:closedsub}. Then for every $b'\in M'$, the homomorphism $\varphi^*_{b',\mathrm{k}}$ (see \eqref{eq:Introvarphi2}) is surjective. 
\end{lemd}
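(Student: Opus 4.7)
The plan is to exploit the transparent form of $\zeta_{\bm r}$ and produce, for every element of $\ker \varphi^*_{b',2}$, an explicit linear lift. I would begin by fixing $b'=(a_1,\ldots,a_{n'})\in N'$, so that $\overline{\varphi}(b')=p_{r_1}(b')=(a_1,\ldots,a_{r_1},0,\ldots,0)$ in $N$, and introducing the shifted coordinates $\tilde x_i := x_i - a_i$ for $1\le i\le r_1$ (with $\tilde x_i := x_i$ for $i>r_1$) on the target and $\tilde u_{i'} := u_{i'} - a_{i'}$ for $i'\le n'$ on the source. These give the standard identifications
\[
\wh{\CO}_{\overline{\varphi}(b')} \cong \BC[[\tilde x_1,\ldots,\tilde x_n, y_1,\ldots,y_k]], \quad
\wh{\CO'}_{b'} \cong \BC[[\tilde u_1,\ldots,\tilde u_{n'}, z_1,\ldots,z_{k'}]],
\]
with $\wh{\m}_{\overline{\varphi}(b')}$ and $\wh{\m'}_{b'}$ the respective maximal ideals.

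Next I would read off $\varphi^*_{b',0}$ on generators from Example~\ref{ex:closedsub} (where all $g_i$ and $h_j$ vanish): $\tilde x_i\mapsto \tilde u_i$ for $1\le i\le r_1$, $\tilde x_i\mapsto 0$ for $r_1<i\le n-r_2$, $\tilde x_{n-r_2+j}\mapsto z_j$ for $1\le j\le r_2$, $y_j\mapsto z_{r_2+j}$ for $1\le j\le r_3$, and $y_j\mapsto 0$ for $r_3<j\le k$. Since $\{[\tilde x_i],[y_j]\}$ form a basis of the cotangent space $\wh{\m}_{\overline{\varphi}(b')}/\wh{\m}_{\overline{\varphi}(b')}^{2}$, and the nonzero images $\tilde u_i$, $z_j$, $z_{r_2+j}$ are themselves linearly independent in the target cotangent space, the induced map $\varphi^*_{b',2}$ has kernel
\[
\ker \varphi^*_{b',2} = \Span_{\BC}\{[\tilde x_i] \mid r_1< i\le n-r_2\} \oplus \Span_{\BC}\{[y_j] \mid r_3< j\le k\}.
\]

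Finally, given $\alpha \in \ker\varphi^*_{b',2}$ written as $\alpha = \sum_{i=r_1+1}^{n-r_2} c_i[\tilde x_i] + \sum_{j=r_3+1}^{k} d_j[y_j]$, I would take the tautological lift $\tilde\alpha := \sum_{i=r_1+1}^{n-r_2} c_i\tilde x_i + \sum_{j=r_3+1}^{k} d_j y_j \in \wh{\m}_{\overline{\varphi}(b')}$. Each generator appearing in $\tilde\alpha$ is killed by $\varphi^*_{b',0}$, so $\tilde\alpha\in\ker\varphi^*_{b',0}$, and by construction $\tilde\alpha$ reduces to $\alpha$ modulo squares; hence $\varphi^*_{b',\mathrm{k}}(\tilde\alpha)=\alpha$, proving surjectivity. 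There is no genuine obstacle: the only care required is the coordinate shift at the base point, because the cotangent kernel is already spanned by generators that themselves lie in $\ker\varphi^*_{b',0}$. It is essential that the model morphism $\zeta_{\bm r}$ carries no higher-order terms—indeed, Theorem~\ref{thm:constantrankthmmain} identifies the surjectivity of $\varphi^*_{b',\mathrm{k}}$ as precisely the obstruction distinguishing standardizable morphisms from general constant-rank ones.
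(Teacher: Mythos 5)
Your proof is correct and supplies exactly the computation the paper omits (the lemma is stated there as ``straightforward'' with no proof): in the shifted coordinates the generators $x_i$ ($r_1<i\le n-r_2$) and $y_j$ ($j>r_3$) are killed outright by $\varphi^*_{b'}$, their classes span $\ker\varphi^*_{b',2}$ because the remaining generators map to linearly independent cotangent classes, and the tautological linear lift lands in $\ker\varphi^*_{b',0}$. No issues.
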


In view of  Lemma \ref{lem:constantrankatob},  it suffices to prove only the 'if' part of Theorem \ref{thm:constantrankthmmain}.
We start with the following special case. For convenience, when $M'=\{b\}$ contains only one point, we will often not distinguish the spaces $\CO'(M')$, $\CO'_b$ and 
$\widehat{\CO'}_b$.

\begin{lemd}\label{lem:stalkofconstantrank} If 
 $M'=(\BR^0)^{(k')}$, $M=(\BR^0)^{(k)}$  for some $k',k\in \BN$, and $\varphi_{b,\mathrm{k}}^*$ is surjective, then $\varphi$ is standardizable near $b$. \end{lemd}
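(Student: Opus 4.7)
In this single-point setting, $\CO'(M')=\BC[[z_1,\dots,z_{k'}]]$, $\CO(M)=\BC[[y_1,\dots,y_k]]$, the morphism $\varphi$ is encoded by the $\BC$-algebra homomorphism $\varphi^*: \BC[[y]]\to\BC[[z]]$, and ``standardizable near $b$'' reduces (since $n=n'=0$ forces $r_1=r_2=0$ in \eqref{eq:introstrandard}) to finding formal parameter systems $\tilde y_1,\dots,\tilde y_k$ on $M$ and $\tilde z_1,\dots,\tilde z_{k'}$ on $M'$ such that
\[
\varphi^*(\tilde y_i)=\tilde z_i\ \text{for}\ i\le r,\qquad \varphi^*(\tilde y_i)=0\ \text{for}\ i>r,
\]
where $r:=\mathrm{Rank}(\varphi^*_{b,2})=\mathrm{Rank}(d\varphi_b^{\mathrm f})$. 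Note that $r\le k'$ automatically, and $r\le k$ since $\varphi^*_{b,2}$ has source $\m/\m^2$ of dimension $k$.

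The plan is to build the new target coordinates first, then the source coordinates. First, since $\varphi^*_{b,2}$ has rank $r$, its kernel is a subspace of $\m/\m^2$ of dimension $k-r$. Pick any $\BC$-basis $\overline w_{r+1},\dots,\overline w_k$ of this kernel and lift each to an element of $\m$; by the assumed surjectivity of $\varphi^*_{b,\mathrm k}$ (whose target is exactly $\ker\varphi^*_{b,2}$), these lifts can be chosen in $\ker\varphi^*_{b,0}$, i.e.\ we obtain $w_{r+1},\dots,w_k\in\m$ with $\varphi^*(w_j)=0$ whose images in $\m/\m^2$ span $\ker\varphi^*_{b,2}$. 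Choose further $u_1,\dots,u_r\in\m$ whose images in $\m/\m^2$ form a complementary subspace, so that $(u_1,\dots,u_r,w_{r+1},\dots,w_k)$ is a $\BC$-basis of $\m/\m^2$.

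Next, I apply the formal inverse function theorem (Theorem \ref{thm:invfun}, or directly \cite[(A4.5)]{Ha}) to the endomorphism of $\BC[[y]]$ sending $y_i\mapsto u_i$ ($i\le r$) and $y_i\mapsto w_i$ ($i>r$): since its differential at $b$ is invertible, this endomorphism is an automorphism $\vartheta^*$ of $\BC[[y]]$, giving the new target coordinates $\tilde y_1,\dots,\tilde y_k$. By construction $\varphi^*(\tilde y_i)=0$ for $i>r$, so only the elements $\tilde z_i:=\varphi^*(u_i)=\varphi^*(\tilde y_i)\in\m'$ for $i\le r$ remain to be understood. Because $u_1,\dots,u_r$ project to a complement of $\ker\varphi^*_{b,2}$ in $\m/\m^2$, the classes of $\tilde z_1,\dots,\tilde z_r$ in $\m'/\m'^2$ are linearly independent; complete them to a $\BC$-basis of $\m'/\m'^2$ by adding elements $\tilde z_{r+1},\dots,\tilde z_{k'}\in\m'$, and again invoke the formal inverse function theorem to see that the assignment $z_i\mapsto\tilde z_i$ extends to an automorphism of $\BC[[z]]$, giving the desired source coordinate change $(\vartheta')^*$. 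With the new coordinate systems, the homomorphism $((\vartheta')^{-1})^*\circ\varphi^*\circ\vartheta^*$ sends $y_i\mapsto z_i$ for $i\le r$ and $y_i\mapsto 0$ for $i>r$, matching \eqref{eq:introstrandard}.

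There is no real obstacle beyond the careful lifting step: the crux is that surjectivity of $\varphi^*_{b,\mathrm k}$ is precisely what allows us to promote mod-$\m^2$ representatives of $\ker\varphi^*_{b,2}$ to honest elements of $\ker\varphi^*$, which is exactly what is needed so that the chosen $w_j$'s can serve as new coordinates that $\varphi^*$ kills on the nose rather than merely to first order. The remainder is bookkeeping plus two applications of the formal inverse function theorem.
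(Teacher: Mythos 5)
Your proposal is correct and follows essentially the same route as the paper: choose target coordinates $\tilde y_1,\dots,\tilde y_k$ so that the last $k-r$ of them lie in $\ker\varphi^*_{b,0}$ (using surjectivity of $\varphi^*_{b,\mathrm k}$ to lift a basis of $\ker\varphi^*_{b,2}$), set $\tilde z_i:=\varphi^*(\tilde y_i)$ for $i\le r$, complete to a coordinate system on the source, and apply the (formal) inverse function theorem twice. The paper's proof is the same argument with the roles of your $u_i$ and $w_j$ played by its $y'_1,\dots,y'_r$ and $y'_{r+1},\dots,y'_k$.
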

\begin{proof}  Set $r:=\mathrm{Rank}(d\varphi_b)$.
Note that in this case  \[ \dim \m_{\overline{\varphi}(b)}/\m^2_{\overline{\varphi}(b)}=k\quad\text{and}\quad \dim \m'_b/(\m'_b)^2=k'.\] 
Since $\varphi_{b,\mathrm{k}}^*$ is surjective, 
there exists a subset  $\{y'_{j}\}_{1\leq j\leq k}$ of $\m_{\overline{\varphi}(b)}$ such that 
\begin{itemize}
    \item the images of $\overline{y'_{1}},\overline{y'_{2}}, \dots, \overline{y'_{r}}$ under the map 
    \[ \varphi^*_{b,2}:\ \m_{\overline{\varphi}(b)}/\m^2_{\overline{\varphi}(b)}\rightarrow \m'_b/(\m'_b)^2\]
    are linearly independent;
    \item  the set 
    \[\{\overline{y'_{r+1}},\overline{y'_{r+2}}, \dots, \overline{y'_{k}}\}\] forms a basis of $\ker \varphi^*_{b,2}$; and 
    \item the set
    $\{y'_{r+1},y'_{r+2}, \dots,y'_{k}\}$ is contained in $\ker \varphi^*_{b,0}$ (see \eqref{eq:varphi_0}).
\end{itemize} 
Here for every $f\in \m_{\overline{\varphi}(b)}$, the notation $\overline{f}$ stands for the image of $f$ under the quotient  map \[\m_{\overline{\varphi}(b)}\rightarrow \m_{\overline{\varphi}(b)}/\m^2_{\overline{\varphi}(b)}.\] 
Similar notation will be used for the elements of $\m'_b$.

On the other hand, write $z'_i:=\varphi^*_b(y'_i)$ for $1\leq i\leq r$. Since $\overline{z'_i}=\varphi^*_{b,2}(\overline{y'_i})$ for $1\le i\le r$, it follows that 
  \[\{\overline{z'_1},\overline{z'_2},\dots,\overline{z'_{r}}\}\] is a linearly independent subset of $\m'_b/(\m'_b)^2$.
Extend it to a basis
\[\{\overline{z'_1},\overline{z'_2},\dots,\overline{z'_{r}},\overline{z'_{r+1}},\overline{z'_{r+2}},\dots,\overline{z'_{k'}}\} \]
 of $\m'_b/(\m'_b)^2$, where $z'_{r+1},\dots,z'_{k'}$ are suitable elements in $\m_b'$.

Write $\{y_1,y_2,\dots,y_k\}$ and $\{z_1,z_2,\dots,z_{k'}\}$ for the standard coordinate systems of $M$ and $M'$, respectively. 
Then we have two morphisms 
\[\vartheta: M\rightarrow M\quad \text{and}\quad \vartheta': M'\rightarrow M'\] given by (see Proposition \ref{prop:genebij})
\[
(y_1,y_2,\dots,y_k)\mapsto 
(y'_1,y_2',\dots,y'_k)\quad \text{and}\quad (z_1,z_2,\dots,z_{k'})\mapsto (z'_1,z_2',\dots, z'_{k'}),\]respectively.  By Theorem \ref{thm:invfun}, they are both isomorphisms of formal manifolds.
 It is clear that the morphism  \[\vartheta\circ\varphi\circ(\vartheta')^{-1}: M'\rightarrow M\] has the desired form, which makes $\varphi$ is standardizable near $b$. This completes the proof.
\end{proof}

For the general case, by Theorem \ref{thm:rankthm}, we  assume  without loss of generality  that
\begin{itemize}
    \item $M'=(N')^{(k')}$, where $N'=\mathrm{C}(n')\subset \BR^{n'}$ and $n',k'\in \BN$;
    \item $ M=N^{(k)}$, where $N=\mathrm{C}(n)\subset \BR^{n}$  and $n,k\in \BN$; 
    \item $\varphi$
has the form as in \eqref{eq:defconsrankmor}  with \begin{eqnarray*}
g_i&=&\sum_{j'=1}^{r_2+r_3}g_{ij'}z_{j'}+\sum_{J\in \BN^{k'}; |J|\ge 2} g_{iJ}z^J\quad (i=1,2,\dots,n-r_1-r_2),\\
h_j&=&\sum_{j'=r_2+1}^{r_2+r_3}h_{jj'}z_{j'}
+\sum_{J\in \BN^{k'}; |J|\ge 2} h_{jJ}z^J\quad (j=1,2,\dots,k-r_3)
\end{eqnarray*}
for some $g_{ij'},g_{iJ},h_{jj'},h_{jJ}\in \RC^\infty(N')$; and 
\item $\varphi^*_{b',\mathrm{k}}$ is surjective for each $b'\in M'$.
\end{itemize}
Here, for every $m\in \BN$, write 
\be \label{eq:C(n)}
\mathrm{C}(m):=\{(a_1,a_2,\dots,a_m)\in \BR^m\mid |a_i|<1\ \text{for all}\ i\}.
\ee 

 \begin{lemd}\label{lem:ghinpf2}  
Let $1\leq i\leq n-r_1-r_2$, $1\leq j\leq k-r_3$, and $r_1+1\le l\le n'$. Then

\noindent(a) $g_{iJ}=h_{jJ}=0$ for all $J\in \BN^{k'}\setminus \BN^{r_2+r_3}\times \{0\}^{k'-r_2-r_3}$ with $|J|\ge 2$.

\noindent(b) $\partial_{u_l}(g_{iJ'})=\partial_{u_l}(h_{jJ'})=0$ for all $J'\in \BN^{r_2+r_3}\times \{0\}^{k'-r_2-r_3}$ with $|J'|\ge 2$.

\noindent(c) $\partial_{u_l}(g_{ij'})=\partial_{u_l}(h_{jj''})=0$ for all $1\le j'\le r_2+r_3$ and $r_2+1\le j''\le r_2+r_3$.

 \end{lemd}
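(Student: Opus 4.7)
The plan is to apply Lemma~\ref{lem:stalkofconstantrank} at each point $u^0 \in N'$ to pin down the image of the formal-stalk homomorphism $\varphi^*_{u^0}$ as an explicit formal power series subring of $\wh{\CO'}_{u^0}$, and then to read off the claimed vanishing from a Taylor-coefficient comparison.

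Fix $u^0 \in N'$ and introduce shifted coordinates $U_i = u_i - u^0_i$ for $1 \le i \le r_1$, $U_i = u_i$ for $r_1 < i \le n'$, and $Z_{j'} = z_{j'}$, so that $\wh{\CO'}_{u^0} = \BC[[U_1, \ldots, U_{n'}, Z_1, \ldots, Z_{k'}]]$. Let $A' := \varphi^*_{u^0}\bigl(\wh{\CO}_{\overline{\varphi}(u^0)}\bigr)$ denote the image subring. From the explicit expression of $\varphi$ one sees that $A'$ contains $U_1, \ldots, U_{r_1}, Z_1, \ldots, Z_{r_2+r_3}$ together with the series $g_i(u^0+U, Z)$ and $h_j(u^0+U, Z)$. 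Viewing $\varphi_{u^0} : M'_{u^0} \to M_{\overline{\varphi}(u^0)}$ as a morphism of formal manifolds over singletons and invoking Lemma~\ref{lem:stalkofconstantrank} (whose hypothesis is exactly the surjectivity of $\varphi^*_{u^0,\mathrm{k}}$), we conclude that $\varphi_{u^0}$ is formally standardizable; consequently $A'$ is abstractly isomorphic to $\BC[[T_1, \ldots, T_{r_1+r_2+r_3}]]$, i.e., a complete regular local $\BC$-algebra of dimension $r_1+r_2+r_3$ with residue field $\BC$.

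Next I identify $A'$ concretely inside $\wh{\CO'}_{u^0}$. A direct computation shows that the image of $\varphi^*_{u^0,2}$ equals the span of $U_1, \ldots, U_{r_1}, Z_1, \ldots, Z_{r_2+r_3}$, of dimension $r_1+r_2+r_3$. Surjectivity of $\varphi^*_{u^0,\mathrm{k}}$ then forces $\dim_\BC \m_{A'}/\m_{A'}^2 = r_1+r_2+r_3$, and the natural injection $\m_{A'}/\m_{A'}^2 \hookrightarrow \wh{\m'}_{u^0}/(\wh{\m'}_{u^0})^{2}$ has image exactly this span. Hence the classes of the $U_i$'s and $Z_{j'}$'s form a basis of $\m_{A'}/\m_{A'}^2$, that is, a regular system of parameters for the complete regular local ring $A'$. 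Cohen's structure theorem (or equivalently a Nakayama argument for complete local rings of matching dimensions) then yields
\[
A' = \BC[[U_1, \ldots, U_{r_1}, Z_1, \ldots, Z_{r_2+r_3}]]
\]
as subrings of $\wh{\CO'}_{u^0}$. In particular, $g_i(u^0+U, Z)$ and $h_j(u^0+U, Z)$ are formal power series in $U_1, \ldots, U_{r_1}, Z_1, \ldots, Z_{r_2+r_3}$ alone.

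Expanding $g_i = \sum_J g_{iJ}(u) z^J$ and computing, the coefficient of $U^I Z^J$ in the Taylor expansion of $g_i(u^0+U, Z)$ at $(u^0, 0)$ equals $\tfrac{1}{I!}(\partial_u^I g_{iJ})(u^0)$. By the previous paragraph, this coefficient must vanish whenever $I_l > 0$ for some $l > r_1$ or $J_{j'} > 0$ for some $j' > r_2+r_3$. Letting $u^0$ range over $N'$, the choice $I = 0$ yields (a), while the choice $I = e_l$ with $r_1 < l \le n'$ yields (b) and (c); the argument for $h_j$ is identical. The principal obstacle is the ring-theoretic identification of $A'$, which combines Lemma~\ref{lem:stalkofconstantrank} (to secure the abstract isomorphism type of $A'$ as a complete regular local ring of the correct dimension) with Cohen's structure theorem (to upgrade this into an explicit subring description via the regular system of parameters); everything else is routine Taylor-coefficient bookkeeping.
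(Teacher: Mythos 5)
Your proof is correct and takes essentially the same route as the paper: both hinge on applying Lemma~\ref{lem:stalkofconstantrank} to the stalk morphism $\varphi_{b'}$ to pin down the image of $\varphi^*_{b'}$ inside $\wh{\CO'}_{b'}$, and then read off the vanishing of the relevant Taylor coefficients of $g_i$ and $h_j$. The only difference is packaging — the paper computes the rank of each truncation $\varphi^*_{b',s+1}$ and exhibits an explicit monomial basis of the truncated image, whereas you identify the full image subring at once as $\BC[[U_1,\dots,U_{r_1},Z_1,\dots,Z_{r_2+r_3}]]$ via a regular system of parameters and Cohen's theorem (note only that the injectivity of $\m_{A'}/\m_{A'}^2\rightarrow \wh{\m'}_{u^0}/(\wh{\m'}_{u^0})^2$ is itself a consequence of your dimension count rather than being automatic); both versions yield the same conclusion.
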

\begin{proof} Let $b'=(b'_1,b'_2,\dots,b'_{n'})\in M'$.  For every $s\in \BN$, there is a linear map  \[\varphi^*_{b',s+1}:\ \wh{\m}_{\overline\varphi(b')}/\wh{\m}^{s+1}_{\overline{\varphi}(b')}\rightarrow \wh{\m'}_{b'}/(\wh{\m'}_{b'})^{s+1} \] induced by the local homomorphism $\varphi^*_{b'}:\wh{\CO}_{\overline\varphi(b')}\rightarrow \wh{\CO'}_{b'}$. Recall from   \eqref{eq:reductionvarphi} that there is a  morphism  
 \[\varphi_{b'}:\ M'_{b'}\rightarrow M_{\overline{\varphi}(b')}\] whose induced homomorphism on the spaces of formal functions is exactly the local homomorphism 
$\varphi_{b'}^*:\ \widehat{\CO}_{\overline{\varphi}(b')}\rightarrow \widehat{\CO'}_{b'}$. 
By applying Lemma \ref{lem:stalkofconstantrank} to the morphism $\varphi_{b'}$, we see that 
 \begin{eqnarray}\label{eq:rankvarphbs+1}
   \mathrm{Rank}(\varphi^*_{b',s+1})={{r+s}\choose{s}}-1\quad (r:=r_1+r_2+r_3).  
 \end{eqnarray}

 By (the proof of) \cite[Proposition 2.6]{CSW1}, there is a  $\C$-algebra identification \[\widehat{\CO'}_{b'}=\C[[u_1-b'_1,u_2-b'_2,\dots,u_{n'}-b'_{n'},z_1,z_2,\dots,z_{k'}]].\] Under this identification, 
the canonical homomorphism  $\CO'(M')\rightarrow \widehat{\CO'}_{b'}$ is as follows
\begin{eqnarray}\label{eq:taylorexp}
 \pi:\  \RC^\infty(N')[[z_1,\dots,z_{k'}]]&\rightarrow &\C[[u_1-b'_1,\dots,u_{n'}-b'_{n'},z_1,\dots,z_{k'}]],\\
  \notag \sum_{J'\in \BN^{k'}} f_{J'} z^{J'}&\mapsto & \sum_{J'\in \BN^{k'}}
   \left(\sum_{I'\in \BN^{n'}} f_{I',J'}(b') (u-b')^{I'}\right) z^{J'},
\end{eqnarray}
where $f_{I',J'}(b)$ is the coefficient of 
\[(u-b')^{I'}:=(u_1-b'_1)^{i_1}(u_2-b'_2)^{i_2}\cdots(u_{n'}-b'_{n'})^{i_{n'}}\quad  (I'=(i_1,i_2,\dots,i_{n'})) \] in the Taylor series of $f_{J'}$  at $b'$. From \eqref{eq:defconsrankmor}, we know that 
\[
u_1-b'_1,u_2-b'_2,\dots,u_{r_1}-b'_{r_1},z_1,z_2,\dots,z_{r_2+r_3}\in \varphi^*(\CO(M)).
\]
This together with \eqref{eq:rankvarphbs+1} implies that for every $s\ge 1$, the set 
\[
\{(u-b')^{I_1}z^{J_1}+(\wh{\m'}_{b'})^{s+1}\}_{I_1\in \BN^{r_1}\times\{0\}^{n'-r_1},J_1\in \BN^{r_2+r_3}\times\{0\}^{k'-r_2-r_3}; 0<|I_1|+|J_1|\le s}
\]
forms a basis of the image $\mathrm{im}\,\varphi^*_{b',s+1}$.
Then the assertions (a), (b), and (c) in the lemma follow respectively  by considering the elements 
\begin{itemize}
    \item[(a)] 
$\pi(g_i)+(\wh{\m'}_{b'})^{|J|+1}$ and $\pi(h_j)+(\wh{\m'}_{b'})^{|J|+1}$ in $\mathrm{im}\,\varphi^*_{b',|J|+1}$;
\item[(b)] $\pi(g_i)+(\wh{\m'}_{b'})^{|J'|+2}$ and $\pi(h_j)+(\wh{\m'}_{b'})^{|J'|+2}$ in $\mathrm{im}\,\varphi^*_{b',|J'|+2}$; and
\item[(c)] $\pi(g_i)+(\wh{\m'}_{b'})^{3}$ and $\pi(h_j)+(\wh{\m'}_{b'})^{3}$ in $\mathrm{im}\,\varphi^*_{b',3}$.
\end{itemize}

 \end{proof}

 Set \[N_1:=N\cap (\BR^{r_1}\times \{0\}^{n-r_1-r_2}\times \BR^{r_2})\quad \text{and}\quad N_1':=N'\cap (\BR^{r_1}\times \{0\}^{n'-r_1}).\] 
 Since $N$ and $N'$ are open cubes, there are canonical  projections 
 \[
 p=(\overline{p},p^*):\ N^{(k)}\rightarrow (N_1)^{(r_3)}\quad \text{and}\quad 
  p'=(\overline{p'},(p')^*):\ (N')^{(k')}\rightarrow (N_1')^{(r_2+r_3)}.
 \] given by
 \[(x_1,\dots,x_{r_1},x_{n-r_2+1},\dots x_n,y_1,\dots,y_{r_3} )\mapsto (x_1,\dots,x_{r_1},x_{n-r_2+1},\dots x_n,y_1,\dots,y_{r_3} )\]
 and \[(u_1,\dots,u_{r_1},z_1,\dots ,z_{r_2+r_3} )\mapsto (u_1,\dots,u_{r_1},z_1,\dots ,z_{r_2+r_3} )\]
where, by abuse of notation, \[(x_1,\dots,x_{r_1},x_{n-r_2+1},\dots x_n,y_1,\dots,y_{r_3} )\qaq(u_1,\dots,u_{r_1},z_1,\dots,z_{r_2+r_3} )\] are respectively the standard coordinate system of $(N_1)^{(r_3)}$ and $(N_1')^{(r_2+r_3)}$.
 \begin{lemd}\label{lem:borelforconsrank}
   For each $1\leq i\leq n-r_1-r_2$ and $1\leq j\leq k-r_3$, there exist formal functions 
   $g_i'\ \text{and}\ h_j'$ in $ \CO_{N_1}^{(r_3)}(N_1)=\RC^{\infty}(N_1)[[y_1,\dots,y_{r_3}]]$
   satisfying that \[\varphi^*(p^*(g'_i))=g_i\quad \text{and}\quad \varphi^*(p^*(h_j'))=h_j.\]
     
 \end{lemd}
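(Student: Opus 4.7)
The plan is to exploit the restrictive form of $g_i$ and $h_j$ furnished by Lemma \ref{lem:ghinpf2}, and to realize them as images of formal functions on $N_1$ via a parametric version of Borel's lemma.

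First, I would rewrite $g_i$ and $h_j$. By Lemma \ref{lem:ghinpf2}(a), in the expansions \eqref{eq:defgi} and \eqref{eq:defhj} the only nonzero monomials are $z^J$ with $J \in \BN^{r_2+r_3} \times \{0\}^{k'-r_2-r_3}$, while (b) and (c) together force every coefficient $g_{iJ'}$, $g_{ij'}$, $h_{jJ'}$, $h_{jj''}$ to depend only on $u_1, \ldots, u_{r_1}$. Splitting such an exponent as $(I, L) \in \BN^{r_2} \times \BN^{r_3}$, one can rearrange
\[
g_i = \sum_{L \in \BN^{r_3}} G_{i,L}(u_1, \ldots, u_{r_1};\, z_1, \ldots, z_{r_2}) \, z_{r_2+1}^{l_1} \cdots z_{r_2+r_3}^{l_{r_3}},
\]
where each $G_{i,L}$ lies in $\RC^\infty(\mathrm{C}(r_1))[[z_1, \ldots, z_{r_2}]]$. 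The same rewriting applies to $h_j$ with coefficients $H_{j,L}$, subject to the extra condition that $H_{j,0}$ has no constant and no degree-one terms, since \eqref{eq:defhj} contains no contributions linear in $z_1, \ldots, z_{r_2}$ alone.

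Next, I would compute $\varphi^*(p^*(g_i'))$ for an ansatz. For any $g_i' = \sum_L g_{i,L}'\,y^L \in \RC^\infty(N_1)[[y_1, \ldots, y_{r_3}]]$, the composition $\varphi^* \circ p^*$ acts on the coordinates of $(N_1)^{(r_3)}$ by $x_l \mapsto u_l$ for $1 \le l \le r_1$, $x_{n-r_2+l} \mapsto z_l$ for $1 \le l \le r_2$, and $y_l \mapsto z_{r_2+l}$ for $1 \le l \le r_3$. Expanding each $g_{i,L}'$ as a Taylor series in its last $r_2$ arguments at the slice $N_1 \cap \{x_{n-r_2+1} = \cdots = x_n = 0\}$, one obtains
\[
\varphi^*(p^*(g_i')) = \sum_{L \in \BN^{r_3}} \widehat{g_{i,L}'}(u_1, \ldots, u_{r_1};\, z_1, \ldots, z_{r_2}) \, z_{r_2+1}^{l_1} \cdots z_{r_2+r_3}^{l_{r_3}},
\]
where $\widehat{g_{i,L}'}$ denotes that Taylor series. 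Comparing with the previous display, the problem reduces to finding, for each $L \in \BN^{r_3}$, a smooth function $g_{i,L}' \in \RC^\infty(N_1)$ whose Taylor series in $x_{n-r_2+1}, \ldots, x_n$ along the slice equals $G_{i,L}$; the analogous reduction applies to $h_{j,L}'$ with the vanishing constraints on $H_{j,0}$ described above.

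The existence of such $g_{i,L}'$ (and $h_{j,L}'$) is exactly the content of the parametric Borel lemma: given any sequence of smooth functions on a cube in $\BR^{r_1}$ indexed by $I \in \BN^{r_2}$, one can extend to a smooth function on $N_1$ with the prescribed $I$-th normal Taylor coefficient at the slice. Setting $g_i' := \sum_L g_{i,L}' y^L$ and $h_j' := \sum_L h_{j,L}' y^L$ then gives elements of $\CO_{N_1}^{(r_3)}(N_1)$ satisfying $\varphi^*(p^*(g_i')) = g_i$ and $\varphi^*(p^*(h_j')) = h_j$. The only non-formal input beyond Lemma \ref{lem:ghinpf2} is Borel's lemma, already invoked in the paper for Lemma \ref{lem:zetarsur}; there is no substantial obstacle beyond bookkeeping.
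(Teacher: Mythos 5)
Your proposal is correct and follows essentially the same route as the paper: Lemma \ref{lem:ghinpf2} is used to confine $g_i,h_j$ to the variables $u_1,\dots,u_{r_1},z_1,\dots,z_{r_2+r_3}$, and the remaining surjectivity is supplied by a parametric Borel lemma. The paper merely packages your explicit Taylor-coefficient matching as the already-proved surjectivity of the slice morphism $\psi^*$ (Lemma \ref{lem:zetarsur}) combined with the commutative square $\varphi^*\circ p^*=(p')^*\circ\psi^*$, so the two arguments differ only in presentation.
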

 \begin{proof}
      By Proposition \ref{prop:genebij},
there is a morphism \[\psi=(\overline{\psi},\psi^*):\  (N_1')^{(r_2+r_3)}\rightarrow N_1^{(r_3)}\] given by 
\begin{eqnarray*}
   && (x_1,\dots,x_{r_1},x_{n-r_2+1},\dots x_n,y_1,\dots,y_{r_3} )\\ &\mapsto & (u_1,\dots,u_{r_1},z_1,\dots,z_{r_2},z_{r_2+1},\dots,z_{r_2+r_3}).
\end{eqnarray*}
Lemma \ref{lem:zetarsur} implies that the homomorphism 
\[
\psi^*:\ \RC^{\infty}(N_1)[[y_1,y_2,\dots,y_{r_3}]]\rightarrow \RC^{\infty}(N_1')[[z_1,z_2,\dots,z_{r_2+r_3}]]
\]
is surjective. On the other hand, 
 Lemma \ref{lem:ghinpf2} implies that 
 \[
 g_i,\ h_j\in (p')^*(\RC^{\infty}(N_1')[[z_1,z_2,\dots,z_{r_2+r_3}]]).
 \]
The lemma then follows from the commutative diagram  
\be \label{diag:projections}\begin{CD}
 (N_1')^{(r_2+r_3)}@>  \psi >>(N_1)^{(r_3)} \\
	@A p' AA          @A  A p A\\
	(N')^{(k')} @> \varphi >>N^{(k)}.  \\
\end{CD} 
\ee

 \end{proof}
\noindent\textbf{Proof of Theorem \ref{thm:constantrankthmmain}} 
With the notations used in (the proof of) Lemma \ref{lem:borelforconsrank}, we write 
 \[x'_{r_1+i}:=x_{r_1+i}-p^*(g'_{i})\quad\text{and}\quad y'_{r_3+j}:=y_{r_3+j}-p^*(h'_{j})\] for  $1\leq i\leq n-r_1-r_2$ and $1\leq j\leq k-r_3$. Let $\vartheta: N^{(k)}\rightarrow N^{(k)} $ be the morphism given by 
\begin{eqnarray*}
 && (x_1,\dots,x_{r_1},x_{r_1+1},\dots,x_{n-r_2},x_{n-r_2+1},\dots,x_{n},y_1,\dots,y_{r_3},y_{r_3+1},\dots,y_k)\\  
\notag &\mapsto&
 (x_1,\dots,x_{r_1},x'_{r_1+1},\dots,x'_{n-r_2}, x_{n-r_2+1},\dots, x_n, y_1,\dots, y_{r_3},y'_{r_3+1},\dots,y'_{k}).
\end{eqnarray*} 
It is easy to see that $d\vartheta$ and $d\underline{\vartheta}$ are both bijective.
By Theorem \ref{thm:invfun}, there exists an open neighborhoods $W$ of $0$ in $N$ such that  $W_1:=\overline{\vartheta}(W)$ is open in $N$ and  the morphism 
\[\vartheta|_{W}: W^{(k)}\rightarrow (W_1)^{(k)}\] is an isomorphism of formal manifolds. Set $W':=(\overline{\varphi})^{-1}(W)$. 
Then the composition morphism 
\[ (W’)^{(k')}\xrightarrow{\varphi|_{W'}} W^{(k)}\xrightarrow{\vartheta}W_1^{(k)}\] has the form as in \eqref{eq:introstrandard}. This completes the proof of Theorem \ref{thm:constantrankthmmain}.

\subsection{Immersions and submersions}\label{subsec:immvssub} 
Throughout this subsection, let 
\[\varphi=(\overline\varphi,\varphi^*):\ (M',\CO')\rightarrow (M,\CO)\] be a morphism of formal manifolds, and let $b\in M'$.  Set 
\[\bm{r}=(r_1+r_2+r_3,r_1,r_3)=\mathrm{Rank}_b
(\varphi)\] for some $r_1,r_2,r_3\in \BN$, and also set  \[n':=\dim_{b} M',\quad k':=\deg_{b} M',\quad n:=\dim_{\overline{\varphi}(b)} M,\quad k:=\deg_{\overline{\varphi}(b)} M.\]

For a chart $(U,N^{(k)},\vartheta)$ of $M$, write $(x_1,\cdots,x_n,y_1,\dots,y_k)$ for the standard coordinate system of $N^{(k)}$. For a chart $(U',(N')^{(k')},\vartheta')$ of $M'$, we also write $(u_1,\cdots,u_{n'},z_1,\dots,z_{k'})$ for the standard coordinate system of $(N')^{(k')}$.
\begin{dfn}
    \noindent (a) We say that $\varphi$ is an immersion at $b$ if the differential  $d\varphi_b$ of $\varphi$ at $b$ is injective. 

  \noindent (b) We say that $\varphi$ is a submersion at $b$ if the differential  $d\varphi_b$ of $\varphi$ at $b$ is surjective.

    \noindent (c) We say that $\varphi$ is an immersion if for every $b'\in M'$, it is an immersion at $b'$. 

  \noindent (d) We say that $\varphi$ is a submersion if for every $b'\in M'$, it is a submersion at $b'$. 
\end{dfn}

The following two examples show that, unlike the case of smooth manifolds,  an immersion or submersion of formal manifolds may not have locally constant rank. 

\begin{exampled}\label{ex:immnotconstant}
    Assume that $M'=\R^{(2)}$,  $M=(\R^2)^{(2)}$, and the morphism  $\varphi$ is given by 
    \[
   (x_1,x_2,y_1,y_2)\mapsto 
    (u_1,f_1z_1,u_1z_1,f_2z_2),
    \]
    where $(x_1,x_2,y_1,y_2)$ and $(u_1,z_1,z_2)$ are respectively the standard coordinate systems of $M$ and $M'$, and  $f_1,f_2$ are two nowhere vanishing smooth functions on $\R$.
Then $\varphi$ is an immersion, but does not have constant rank near $0$. 
\end{exampled}

\begin{exampled}\label{ex:subnotconstant}
    Assume that $M'=(\R^2)^{(2)}$,  $M=(\R^2)^{(1)}$, and the morphism $\varphi$ is given by 
    \[
   (x_1,x_2,y_1)\mapsto 
(u_1,u_1u_2+f_1z_1,f_2z_2),
    \]
    where $(x_1,x_2,y_1)$ and $(u_1,u_2,z_1,z_2)$ are respectively the standard coordinate systems of $M$ and $M'$, and $f_1,f_2$ are two nowhere vanishing smooth functions on $\R^2$.
Then $\varphi$ is a submersion, but does not have constant rank near $(0,c)$ for every $c\in \R$. 
\end{exampled}

As mentioned in the Introduction, we introduce the following definition. 

\begin{dfn}
  We say that $d\varphi_b$ is regular (or $\varphi$ is regular at $b$)  if 
 \[
\mathrm{Rank}(d\varphi_b)=\mathrm{Rank}(d\underline{\varphi}_b)+ \mathrm{Rank}(d\varphi_b^\mathrm{f}).
 \]
\end{dfn}

The following result is straightforward. 

\begin{lemd}\label{lem:regularmor}
The morphism $\varphi$ has constant rank near $b$ if one of the following conditions holds:

  \noindent  (1) $d\varphi_b$ is a bijection;
  
\noindent (2) $\varphi$ is a regular immersion at $b$;

   \noindent (3) $\varphi$ is a regular submersion at $b$.
\end{lemd}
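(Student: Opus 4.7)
The plan is to work in local charts and reduce the statement to a rank analysis of the Jacobian matrix described in Example \ref{ex:Jacobian}. Using the definition of a formal manifold, I can pick charts around $b$ and $\overline{\varphi}(b)$ so that $\varphi$ takes the form considered there, with Jacobian
\[
J = \begin{pmatrix} F & G \\ 0 & H \end{pmatrix},
\]
where $F$ is an $n \times n'$ matrix and $H$ is a $k \times k'$ matrix, both with entries in $\RC^\infty(N')$. By \eqref{eq:Rk=JFH}, for every $b'$ in the chart
\[
\mathrm{Rank}_{b'}(\varphi) = (\mathrm{Rank}(J_{b'}),\mathrm{Rank}(F_{b'}),\mathrm{Rank}(H_{b'})).
\]
The strategy in each of the three cases is the same: show that the blocks $F$ and $H$ individually attain extremal ranks at $b$, use continuity of minors to propagate this to a neighborhood $U'$ of $b$, and then pin down $\mathrm{Rank}(J_{b'})$ by combining the trivial dimension bound on $\mathrm{Rank}(J_{b'})$ with the inequality $\mathrm{Rank}(d\varphi_{b'}) \ge \mathrm{Rank}(d\underline{\varphi}_{b'}) + \mathrm{Rank}(d\varphi_{b'}^{\mathrm{f}})$ noted in the introduction.

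Case~(1) is essentially free: by the last part of Example \ref{ex:Jacobian}, bijectivity of $d\varphi_b$ is equivalent to invertibility of $J_b$, and this persists on an open neighborhood $U'$ of $b$. Since $J$ is block upper triangular, invertibility of $J_{b'}$ forces $F_{b'}$ and $H_{b'}$ to be invertible square matrices (in particular $n=n'$ and $k=k'$), so all three ranks are constant on $U'$, equal to $n+k$, $n$, and $k$ respectively.

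For case~(2), injectivity of $d\varphi_b$ gives $\mathrm{Rank}(J_b) = n'+k'$, and the regularity identity $\mathrm{Rank}(J_b) = \mathrm{Rank}(F_b) + \mathrm{Rank}(H_b)$ combined with the trivial bounds $\mathrm{Rank}(F_b) \le n'$ and $\mathrm{Rank}(H_b) \le k'$ forces $\mathrm{Rank}(F_b) = n'$ and $\mathrm{Rank}(H_b) = k'$, i.e.\ both blocks have full column rank at $b$. The non-vanishing of some $n' \times n'$ minor of $F$ (resp.\ $k' \times k'$ minor of $H$) is an open condition on $N'$, so full column rank of $F_{b'}$ and $H_{b'}$ persists on a neighborhood $U'$ of $b$. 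Then on $U'$,
\[
n'+k' = \mathrm{Rank}(F_{b'}) + \mathrm{Rank}(H_{b'}) \le \mathrm{Rank}(J_{b'}) \le n'+k',
\]
so equality holds throughout and $\mathrm{Rank}_{b'}(\varphi) = (n'+k', n', k')$ is constant on $U'$. Case~(3) is entirely symmetric: surjectivity together with regularity forces $F_b$ and $H_b$ to have full row ranks $n$ and $k$ respectively, which is again an open condition via non-vanishing of an $n \times n$ or $k \times k$ minor, and the same sandwich argument yields $\mathrm{Rank}(J_{b'}) = n+k$ on a neighborhood.

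The only step requiring any thought is the elementary deduction that regularity combined with injectivity (or surjectivity) of $d\varphi_b$ forces the blocks $F_b$ and $H_b$ individually to attain extremal ranks; once that linear-algebraic bookkeeping is in place, the rest is pure openness of non-vanishing minors. I do not anticipate any genuine obstacle beyond this.
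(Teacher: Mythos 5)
Your overall strategy is the right one, and it is surely what the authors intend: the paper offers no proof of this lemma beyond declaring it straightforward, and reading off the three ranks from the block Jacobian of Example \ref{ex:Jacobian} together with openness of non-vanishing minors is the natural argument. Your cases (2) and (3) are correct as written: regularity plus injectivity (resp.\ surjectivity) forces $F_b$ and $H_b$ to have full column (resp.\ row) rank, this is an open condition, and the sandwich $\mathrm{Rank}(F_{b'})+\mathrm{Rank}(H_{b'})\le \mathrm{Rank}(J_{b'})\le n'+k'$ (resp.\ $\le n+k$) pins down the first component.

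Case (1), however, contains a false step. You assert that invertibility of $J_{b'}=\begin{pmatrix} F_{b'} & G_{b'}\\ 0 & H_{b'}\end{pmatrix}$ forces $F_{b'}$ and $H_{b'}$ to be invertible square matrices, ``in particular $n=n'$ and $k=k'$.'' This is not true for rectangular diagonal blocks, and the paper is explicitly organized around the counterexample: the morphism \eqref{eq:introex1} of Example \ref{ex:intro1} has bijective differential at every point with $n'=n-r<n$ and $k'=k+r>k$, and Corollary \ref{cor:desbijdiff} records that bijectivity of $d\varphi_b$ only yields $n-n'=k'-k\ge 0$. What invertibility of $J_{b'}$ actually gives is that the first $n'$ columns $\begin{pmatrix}F_{b'}\\ 0\end{pmatrix}$ are independent, so $\mathrm{Rank}(F_{b'})=n'$ (full column rank), and that the last $k$ rows $(0\mid H_{b'})$ are independent, so $\mathrm{Rank}(H_{b'})=k$ (full row rank). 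Since these two conclusions hold at \emph{every} point where $J$ is invertible, and invertibility of $J$ is an open condition, the rank is constantly $(n+k,\,n',\,k)$ on that neighborhood — not $(n+k,n,k)$ as you wrote. So the conclusion of case (1) survives, but only after replacing the ``square blocks'' claim by this column/row-rank observation; note also that case (1) is genuinely not subsumed by case (3), since a bijective differential need not be regular (again Example \ref{ex:intro1}: $\mathrm{Rank}(J_b)=n+k>n'+k=\mathrm{Rank}(F_b)+\mathrm{Rank}(H_b)$ when $r>0$).
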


As an immediate consequence of Theorem \ref{thm:rankthm}, we have the following result. 

\begin{prpd} \label{prop:dessubmer}
Assume that  $\varphi$ has constant rank $\bm{r}$ near $b$, and that it is a submersion at $b$. 
Then 
\[r_1+r_2=n\quad\text{and}\quad   r_3=k. \]
Furthermore, there exists a chart $(U',(N')^{(k')},\vartheta')$ of $M'$ containing $b$ and a chart $(U,N^{(k)},\vartheta)$ of $M$ containing $\overline{\varphi}(b)$ such that $\overline{\varphi}(U')\subset U$ and the morphism 
\[
 \vartheta\circ   \varphi|_{U'}\circ (\vartheta')^{-1}:   \       (N')^{(k')}\rightarrow N^{(k)}
\]
is  given by 
\[
(x_1,\dots,x_{r_1},x_{r_1+1},\dots,x_n,y_1,\dots,y_k)\mapsto (u_1,\dots,u_{r_1},z_1,\dots,z_{r_2},z_{r_2+1},\dots,z_{r_2+k}).
\]
\end{prpd}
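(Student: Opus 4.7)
The plan is to deduce the proposition as a direct corollary of Theorem \ref{thm:rankthm}, once the surjectivity of $d\varphi_b$ is translated into explicit identities among the components of the rank triple. So the work is essentially bookkeeping rather than any new geometric input.

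First I would compute the rank. By definition $\mathrm{Rank}(d\varphi_b)=r_1+r_2+r_3$, $\mathrm{Rank}(d\underline\varphi_b)=r_1$, and $\mathrm{Rank}(d\varphi_b^{\mathrm f})=r_3$. Surjectivity of $d\varphi_b$ gives
\[
r_1+r_2+r_3=\mathrm{Rank}(d\varphi_b)=\dim\mathrm{T}_{\overline\varphi(b)}(M)=n+k,
\]
using Lemma \ref{lem:dimtan}. Since $d\varphi_b$ carries $\mathrm{T}_b(\underline{M'})$ into $\mathrm{T}_{\overline\varphi(b)}(\underline M)$, the induced quotient map $d\varphi_b^{\mathrm f}$ is also surjective, hence
\[
r_3=\mathrm{Rank}(d\varphi_b^{\mathrm f})=\dim\bigl(\mathrm{T}_{\overline\varphi(b)}(M)/\mathrm{T}_{\overline\varphi(b)}(\underline M)\bigr)=k.
\]
Subtracting yields $r_1+r_2=n$, establishing the first assertion of the proposition.

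Next, I would invoke Theorem \ref{thm:rankthm} with the triple $(r_1+r_2+r_3,r_1,r_3)=(n+k,r_1,k)$. That theorem produces charts $(U',(N')^{(k')},\vartheta')$ at $b$ and $(U,N^{(k)},\vartheta)$ at $\overline\varphi(b)$ with $\overline\varphi(U')\subset U$ such that $\vartheta\circ\varphi|_{U'}\circ(\vartheta')^{-1}$ is given by \eqref{eq:defconsrankmor} with suitable formal power series $g_i$ and $h_j$ as in \eqref{eq:defgi}--\eqref{eq:defhj}. The point is that the range of the index $i$ for $g_i$ is $1,\dots,n-r_1-r_2$ and the range of $j$ for $h_j$ is $1,\dots,k-r_3$; by the identities just derived, $n-r_1-r_2=0$ and $k-r_3=0$, so no such functions appear at all. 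After suppressing these empty slots, formula \eqref{eq:defconsrankmor} collapses to
\begin{equation*}
(x_1,\dots,x_{r_1},x_{r_1+1},\dots,x_n,y_1,\dots,y_k)\mapsto(u_1,\dots,u_{r_1},z_1,\dots,z_{r_2},z_{r_2+1},\dots,z_{r_2+k}),
\end{equation*}
which is exactly the local model claimed.

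There is no serious obstacle: the only thing to be careful about is the consistent handling of the index ranges so that the "middle" and "tail" blocks of \eqref{eq:defconsrankmor} vanish correctly, and checking that the inequalities $n'\ge r_1$ and $k'\ge r_2+r_3=r_2+k$ from Theorem \ref{thm:rankthm} are automatically satisfied (they come from $\mathrm{Rank}(d\underline\varphi_b)\le\dim_b\underline{M'}$ and $\mathrm{Rank}(d\varphi_b^{\mathrm f})\le\dim\mathrm{T}_b^{\mathrm f}(M')$).
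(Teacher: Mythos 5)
Your proposal is correct and follows exactly the route the paper intends: the paper presents this proposition as an immediate consequence of Theorem \ref{thm:rankthm}, and your computation that surjectivity of $d\varphi_b$ forces $r_1+r_2+r_3=n+k$ and $r_3=k$ (hence $n-r_1-r_2=0$ and $k-r_3=0$, so the $g_i$ and $h_j$ blocks in \eqref{eq:defconsrankmor} are empty) is precisely the bookkeeping the paper leaves implicit.
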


Proposition \ref{prop:dessubmer}, together with Lemma \ref{lem:regularmor}, implies the following two results. 

\begin{cord}\label{cor:desbijdiff}If the differential $d\varphi_b$ of $\varphi$ at $b$ is a bijection, then \[r_1=n', \quad  r_2=n-n'=k'-k, \qaq r_3=k.\] Furthermore, there exists a chart $(U',(N')^{(k')},\vartheta')$ of $M'$ containing $b$ and a chart $(U,N^{(k)},\vartheta)$ of $M$ containing $\overline{\varphi}(b)$ such that $\overline{\varphi}(U')\subset U$ and the morphism 
\[
 \vartheta\circ   \varphi|_{U'}\circ (\vartheta')^{-1}: \ (N')^{(k')}\rightarrow N^{(k)}
\]
is  given by 
\[
(x_1,\dots,x_{n'},x_{n'+1},\dots,x_n,y_1,\dots,y_k)\mapsto (u_1,\dots,u_{n'},z_1,\dots,z_{k'-k},z_{k'-k+1},\dots,z_{k'}).
\]
\end{cord}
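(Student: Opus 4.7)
The strategy is to extract the triple $\mathrm{Rank}_b(\varphi)$ from the bijectivity hypothesis and then invoke Proposition \ref{prop:dessubmer}: a morphism whose differential at $b$ is bijective is in particular a submersion at $b$ and, by Lemma \ref{lem:regularmor}(1), automatically has constant rank near $b$, so the hypotheses of Proposition \ref{prop:dessubmer} are met and the local form of $\varphi$ near $b$ is the one asserted there. All that is left is to match the parameters $(r_1, r_2, r_3)$ produced by Proposition \ref{prop:dessubmer} with the quantities $n', n, k', k$.

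For the rank computation, Lemma \ref{lem:dimtan} gives $\dim \mathrm{T}_b(M') = n' + k'$ and $\dim \mathrm{T}_{\overline\varphi(b)}(M) = n + k$, so bijectivity of $d\varphi_b$ immediately yields $n' + k' = n + k$. Because $d\underline\varphi_b$ is the restriction of $d\varphi_b$ to the $n'$-dimensional subspace $\mathrm{T}_b(\underline{M'})$ (see \eqref{eq:dulphi}), injectivity of $d\varphi_b$ forces $d\underline\varphi_b$ to be injective, whence $r_1 = \mathrm{Rank}(d\underline\varphi_b) = n'$. For $d\varphi_b^{\mathrm f}$, given a class $\overline v \in \mathrm{T}_{\overline\varphi(b)}^{\mathrm f}(M)$, lift it to $v \in \mathrm{T}_{\overline\varphi(b)}(M)$, use surjectivity of $d\varphi_b$ to find $w \in \mathrm{T}_b(M')$ with $d\varphi_b(w) = v$, and observe that the image of $w$ in $\mathrm{T}_b^{\mathrm f}(M')$ is sent to $\overline v$; thus $d\varphi_b^{\mathrm f}$ is surjective and $r_3 = k$. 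Finally, $r_1 + r_2 + r_3 = \mathrm{Rank}(d\varphi_b) = n' + k'$ gives $r_2 = k' - k$, and the identity $n' + k' = n + k$ rewrites this as $r_2 = n - n'$, establishing the first assertion.

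With this rank data in hand, Lemma \ref{lem:regularmor}(1) ensures that $\varphi$ has constant rank $(n' + k', n', k)$ near $b$, and $\varphi$ is a submersion at $b$ by surjectivity of $d\varphi_b$. Proposition \ref{prop:dessubmer} then furnishes charts $(U', (N')^{(k')}, \vartheta')$ of $M'$ containing $b$ and $(U, N^{(k)}, \vartheta)$ of $M$ containing $\overline\varphi(b)$ with $\overline\varphi(U') \subset U$ such that $\vartheta \circ \varphi|_{U'} \circ (\vartheta')^{-1}$ is given by
\[
(x_1,\dots,x_{r_1},x_{r_1+1},\dots,x_n,y_1,\dots,y_k) \mapsto (u_1,\dots,u_{r_1},z_1,\dots,z_{r_2},z_{r_2+1},\dots,z_{r_2+k}).
\]
Substituting $r_1 = n'$ and $r_2 = k' - k$ (so $r_2 + k = k'$) produces exactly the formula in the statement. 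No step is particularly difficult; the only finicky point is the asymmetric bookkeeping in paragraph two, where bijectivity of $d\varphi_b$ descends to mere injectivity of $d\underline\varphi_b$ and mere surjectivity of $d\varphi_b^{\mathrm f}$, not bijectivity of either — a phenomenon already illustrated by Example \ref{ex:intro1} and responsible for $r_2$ being possibly nonzero.
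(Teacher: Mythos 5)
Your proposal is correct and follows exactly the paper's route: the paper derives this corollary in one line from Lemma \ref{lem:regularmor}(1) (bijectivity of $d\varphi_b$ gives constant rank near $b$) together with Proposition \ref{prop:dessubmer}, and your rank bookkeeping ($r_1=n'$, $r_3=k$, $r_2=k'-k=n-n'$ via $n'+k'=n+k$) correctly fills in the parameter matching that the paper leaves implicit.
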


\begin{cord}\label{cor:desregsumer}
If  $\varphi$ is a regular submersion at $b$, then 
there exists a chart $(U',(N')^{(k')},\vartheta')$ of $M'$ containing $b$ and a chart $(U,N^{(k)},\vartheta)$ of $M$ containing $\overline{\varphi}(b)$ such that $\overline{\varphi}(U')\subset U$ and the morphism 
\[
 \vartheta\circ   \varphi|_{U'}\circ (\vartheta')^{-1}:          (N')^{(k')}\rightarrow N^{(k)}  
\] is given by
\begin{eqnarray*}
    (x_1,\dots,x_{n},y_1,\dots,y_k)\mapsto (u_1,\dots,u_{n},z_1,\dots,z_{k}).
\end{eqnarray*}

\end{cord}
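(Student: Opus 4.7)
The plan is to deduce this corollary essentially immediately from Proposition \ref{prop:dessubmer}, using regularity to force the middle parameter $r_2$ to vanish.

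First I would invoke Lemma \ref{lem:regularmor}(3): since $\varphi$ is a regular submersion at $b$, it has constant rank near $b$. Write $\mathrm{Rank}_b(\varphi)=(r_1+r_2+r_3,r_1,r_3)$, so that Proposition \ref{prop:dessubmer} applies and yields charts $(U',(N')^{(k')},\vartheta')$ about $b$ and $(U,N^{(k)},\vartheta)$ about $\overline{\varphi}(b)$ with $\overline{\varphi}(U')\subset U$, together with the conclusion that $r_1+r_2=n$ and $r_3=k$, and that the local expression of $\vartheta\circ\varphi|_{U'}\circ(\vartheta')^{-1}$ has the form
\[
(x_1,\dots,x_n,y_1,\dots,y_k)\mapsto(u_1,\dots,u_{r_1},z_1,\dots,z_{r_2},z_{r_2+1},\dots,z_{r_2+k}).
\]

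Next I would use the regularity hypothesis to show $r_2=0$. By definition of regularity,
\[
\mathrm{Rank}(d\varphi_b)=\mathrm{Rank}(d\underline{\varphi}_b)+\mathrm{Rank}(d\varphi_b^{\mathrm{f}}),
\]
i.e.\ $r_1+r_2+r_3=r_1+r_3$. Hence $r_2=0$, and consequently $r_1=n$ and $r_3=k$. Substituting these values into the local expression above yields exactly
\[
(x_1,\dots,x_n,y_1,\dots,y_k)\mapsto(u_1,\dots,u_n,z_1,\dots,z_k),
\]
which is the asserted standard form.

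There is no genuine obstacle here: the entire content has been packaged into Proposition \ref{prop:dessubmer} (the constant-rank submersion theorem) and Lemma \ref{lem:regularmor}. The only thing to check is the dimensional bookkeeping forced by regularity, namely the identity $r_1+r_2+r_3=r_1+r_3$, which is immediate from the definition.
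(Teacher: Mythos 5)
Your proof is correct and is exactly the paper's intended argument: the paper derives this corollary by combining Lemma \ref{lem:regularmor} (a regular submersion at $b$ has constant rank near $b$) with Proposition \ref{prop:dessubmer}, and the regularity identity $r_1+r_2+r_3=r_1+r_3$ forces $r_2=0$, hence $r_1=n$ and $r_3=k$. Nothing further is needed.
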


As usual, by a local section of $\varphi$, we mean a morphism \[\psi=(\overline{\psi},\psi^*):\ (U,\CO|_U)\rightarrow (M',\CO')\quad (U\ \text{is an open subset of}\ M)\] of formal manifolds such that $\varphi\circ \psi=\mathrm{Id}_U$ in the sense that 
\begin{itemize}
\item $(\overline{\varphi}\circ \overline{\psi})(a)=a$ for all $a\in U$, and 
\item for every open subset $V$ of $U$, the composition 
\[
\psi_{V',V}^*\circ\varphi^*_{V,V'}:\ \CO(V)\rightarrow \CO'(V')\rightarrow \CO(V)\quad (V':=\overline{\varphi}^{-1}(V))
\]
is the identity homomorphism on $\CO(V)$.
\end{itemize} 
The following result generalizes the local section theorem for smooth manifolds (see \cite[Theorem 4.26]{L}).

\begin{prpd}
    The morphism $\varphi$ is a regular submersion at $b$ if and only if there exists a local section 
    \[\psi=(\overline{\psi},\psi^*):\ (U,\CO|_U)\rightarrow (M',\CO')\]
    of $\varphi$ such that $b$ is contained in the image of $\overline{\psi}$.
\end{prpd}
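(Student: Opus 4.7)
My plan is to prove both implications by combining Corollary~\ref{cor:desregsumer} with the chain rule \eqref{eq:chainrule}.

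For the forward direction, I will use Corollary~\ref{cor:desregsumer} to reduce to the case that, in suitable charts $(U',(N')^{(k')},\vartheta')$ at $b$ and $(U,N^{(k)},\vartheta)$ at $a:=\overline{\varphi}(b)$, the morphism $\varphi$ has the normal form $(x_1,\dots,x_n,y_1,\dots,y_k)\mapsto(u_1,\dots,u_n,z_1,\dots,z_k)$, where $(u_1,\dots,u_{n'},z_1,\dots,z_{k'})$ is the standard coordinate system of the source. In particular, $\overline{\varphi}$ is the linear projection $(u_1,\dots,u_{n'})\mapsto(u_1,\dots,u_n)$ on the underlying open subsets. Writing $(b_1,\dots,b_{n'}):=\overline{\vartheta'}(b)$, I will apply Proposition~\ref{prop:genebij} to define a morphism $\psi:U\to M'$ (after shrinking $U$ if necessary) by the assignments $u_i\mapsto x_i$ for $i\le n$, $u_i\mapsto b_i$ for $n<i\le n'$, $z_j\mapsto y_j$ for $j\le k$, and $z_j\mapsto 0$ for $k<j\le k'$. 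By construction $\overline{\psi}(a)=b$, and the identity $\varphi\circ\psi=\mathrm{Id}_U$ follows immediately from Proposition~\ref{prop:genebij} by checking agreement on the coordinate functions $x_i$ and $y_j$.

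For the converse, given a local section $\psi$ of $\varphi$ with $b=\overline\psi(a)$ for some $a\in U$ (necessarily $a=\overline{\varphi}(b)$), I will apply the chain rule \eqref{eq:chainrule} to the equality $\varphi\circ\psi=\mathrm{Id}_U$ at three levels. Directly, $d\varphi_b\circ d\psi_a=\mathrm{Id}_{\mathrm T_a(M)}$; by functoriality of the reduction via \eqref{eq:underlinecommdiag}, $d\underline\varphi_b\circ d\underline\psi_a=\mathrm{Id}_{\mathrm T_a(\underline M)}$; and by passing to the quotients $\mathrm T^\mathrm{f}$, $d\varphi_b^\mathrm{f}\circ d\psi_a^\mathrm{f}=\mathrm{Id}_{\mathrm T_a^\mathrm{f}(M)}$. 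Hence each of $d\varphi_b$, $d\underline\varphi_b$, $d\varphi_b^\mathrm{f}$ is surjective, with ranks $\dim\mathrm T_a(M)$, $\dim\mathrm T_a(\underline M)$, $\dim\mathrm T_a^\mathrm{f}(M)$ respectively. Combining with the dimension identity $\dim\mathrm T_a(M)=\dim\mathrm T_a(\underline M)+\dim\mathrm T_a^\mathrm{f}(M)$ (immediate from the definition of $\mathrm T_a^\mathrm{f}(M)$ as a quotient, together with Lemma~\ref{lem:dimtan}) then yields both the submersion property at $b$ and the regularity equality $\mathrm{Rank}(d\varphi_b)=\mathrm{Rank}(d\underline\varphi_b)+\mathrm{Rank}(d\varphi_b^\mathrm{f})$.

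I do not anticipate any substantive obstacle. The one point of care lies in the forward direction: the constants $b_{n+1},\dots,b_{n'}$ inserted into the definition of $\psi$ must yield a morphism valued in $(N')^{(k')}$, so $U$ must be shrunk around $a$ to ensure $(\overline{\vartheta}(a'),b_{n+1},\dots,b_{n'})\in N'$ for all $a'\in U$; this is legitimate by openness of $N'\subset\BR^{n'}$ and continuity, while the zero constants used for $z_{k+1},\dots,z_{k'}$ manifestly lie in $\m_\CO(U)$ as required by Proposition~\ref{prop:genebij}.
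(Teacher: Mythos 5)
Your proposal is correct and follows essentially the same route as the paper: the forward direction reduces to the normal form of Corollary~\ref{cor:desregsumer} and constructs the section coordinate-wise via Proposition~\ref{prop:genebij}, and the converse applies the chain rule to $\varphi\circ\psi=\mathrm{Id}_U$ at the level of $d\varphi_b$ and $d\underline{\varphi}_b$ to get surjectivity and regularity. Your extra care about inserting the constants $b_{n+1},\dots,b_{n'}$ (rather than zeros) so that $b$ actually lies in the image of $\overline{\psi}$ is a sensible refinement but not a different method.
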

\begin{proof}
    Assume that $\varphi$ is a regular submersion at $b$. With the notations as in  Corollary \ref{cor:desregsumer}, and assume further that $N=\mathrm{C}(n)$ and $N'=\mathrm{C}(n')$ (see \eqref{eq:C(n)}). By Proposition \ref{prop:genebij},  there is a morphism 
    \[ \phi:\ N^{(k)}\rightarrow (N')^{(k')}\] 
    given by 
     \begin{eqnarray*}
         &&(u_1,\dots,u_{n},u_{n+1},\dots,u_{n'},z_1,\dots,z_k,z_{k+1},\dots,z_{k'})\\
         &\mapsto&(x_1,\dots,x_n,\underbrace{0,\dots,0}_{n'-n},y_1,\dots,y_k,\underbrace{0,\dots,0}_{k'-k}).
     \end{eqnarray*}
It is clear that the composition map 
\[\psi: \ (U,\CO|_U)\xrightarrow{\vartheta} N^{(k)}\xrightarrow{\phi} (N')^{(k')}\xrightarrow{(\vartheta')^{-1}} (U',\CO'|_{U'})\rightarrow (M',\CO')\] is a desired local section of $\varphi$.

Conversely, if there exists a local section  \[\psi=(\overline{\psi},\psi^*):\ (U,\CO|_U)\rightarrow (M',\CO')\] of $\varphi$ such that $b$ lies in the image of $\overline{\psi}$, then    \[d\varphi_{b}\circ d\psi_a=\mathrm{id}_{\mathrm{T}_a(U)}\quad \text{and}\quad  d\underline\varphi_{b}\circ d\underline\psi_a=\mathrm{id}_{\mathrm{T}_a(\underline U)}\quad (a\in U\ \text{with}\ \overline{\psi}(a)=b).
\]  
This implies that $\varphi$ is a regular submersion at $b$.
\end{proof}

By applying  Theorem \ref{thm:constantrankthmmain}, we have the following result. 

\begin{prpd}\label{prop:localimmersion}
    Assume that the morphism $\varphi:M'\rightarrow M$ has constant rank $\bm{r}$ near $b$, and that it is an immersion at $b$. Then \[r_1=n' \quad \text{and} \quad r_2+r_3=k'.\] Furthermore,
there exists a chart $(U',(N')^{(k')},\vartheta')$ of $M'$ containing $b$ and a chart $(U,N^{(k)},\vartheta)$ of $M$ containing $\overline{\varphi}(b)$ such that $\overline{\varphi}(U')\subset U$ and the morphism 
\[
 \vartheta\circ   \varphi|_{U'}\circ (\vartheta')^{-1}: \         (N')^{(k')}\rightarrow N^{(k)}  
\] is given by 
\begin{eqnarray*}
&& (x_1,\dots,x_{n'},x_{n'+1},\dots ,x_{n-r_2},x_{n-r_2+1},\dots,x_n,y_1,\dots,y_{r_3},y_{r_3+1},\dots,y_k)\\
&\mapsto& (u_1,\dots,u_{n'},\underbrace{0,\dots,0}_{n-n'-r_2},z_1,\dots,z_{r_2},z_{r_2+1},\dots,z_{k'},\underbrace{0,\dots,0}_{k-r_3}).
\end{eqnarray*}
     \end{prpd}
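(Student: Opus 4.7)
The plan is to first pin down $r_1, r_2, r_3$ numerically, then use Theorem \ref{thm:rankthm} to put $\varphi$ into the constant-rank normal form \eqref{eq:defconsrankmor}, and finally absorb the ``error terms'' $g_i, h_j$ into a triangular coordinate change on the target, exploiting the surjectivity provided by Lemma \ref{lem:zetarsur}.

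The first step is numerical. Injectivity of $d\varphi_b$ gives $\mathrm{Rank}(d\varphi_b) = \dim \mathrm{T}_b(M') = n' + k'$, so $r_1 + r_2 + r_3 = n' + k'$. The restriction $d\underline{\varphi}_b = d\varphi_b|_{\mathrm{T}_b(\underline{M'})}$ inherits injectivity, and $\dim \mathrm{T}_b(\underline{M'}) = n'$, so $r_1 = \mathrm{Rank}(d\underline{\varphi}_b) = n'$; subtracting gives $r_2 + r_3 = k'$.

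For the normal form I would invoke Theorem \ref{thm:rankthm} to obtain charts in which $\varphi : (N')^{(k')} \to N^{(k)}$ has the shape \eqref{eq:defconsrankmor}, with some $g_i, h_j$ as in \eqref{eq:defgi}--\eqref{eq:defhj}. Because $r_1 = n'$ the auxiliary subset $N_1' \subset N'$ appearing in the proof of Theorem \ref{thm:constantrankthmmain} becomes $N'$ itself, and because $r_2 + r_3 = k'$ the companion projection $p'$ of diagram \eqref{diag:projections} collapses to the identity. The attendant morphism $\psi : (N')^{(k')} \to (N_1)^{(r_3)}$ is then a slice morphism in the sense of Definition \ref{def:defzetar}, so Lemma \ref{lem:zetarsur} yields that $\psi^*$ is surjective. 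Choose $g'_i, h'_j \in \CO_{N_1}^{(r_3)}(N_1)$ with $\psi^*(g'_i) = g_i$ and $\psi^*(h'_j) = h_j$; the commutativity of \eqref{diag:projections} rewrites these identities as $\varphi^*(p^*(g'_i)) = g_i$ and $\varphi^*(p^*(h'_j)) = h_j$.

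The final step is the coordinate change. After shrinking $N$ to a small cube around the origin, define $\vartheta : N^{(k)} \to N^{(k)}$ by
\[
x_{r_1+i} \mapsto x_{r_1+i} - p^*(g'_i), \qquad y_{r_3+j} \mapsto y_{r_3+j} - p^*(h'_j),
\]
and identity on the remaining coordinates. Since $p^*(g'_i)$ and $p^*(h'_j)$ depend only on coordinates left unchanged by $\vartheta$, both $d\vartheta$ and $d\underline{\vartheta}$ are block-triangular with unit diagonal blocks, so Theorem \ref{thm:invfun} makes $\vartheta$ a local isomorphism onto an open neighborhood. A direct computation then gives $(\vartheta \circ \varphi)^*(x_{r_1+i}) = g_i - g_i = 0$ and $(\vartheta \circ \varphi)^*(y_{r_3+j}) = h_j - h_j = 0$, so after composing with $\vartheta$ we recover the form prescribed by the proposition. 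I expect the main subtlety to lie in the third paragraph: checking that the dimensional coincidences $r_1 = n'$ and $r_2 + r_3 = k'$ really do reduce the machinery of \S\,\ref{subsec:rankthm} to the simpler slice situation, so that Lemma \ref{lem:zetarsur} can substitute for the pointwise surjectivity of $\varphi^*_{b',\mathrm{k}}$ that Theorem \ref{thm:constantrankthmmain} demands in the general constant-rank case.
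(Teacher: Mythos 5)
Your proof is correct, but it takes a different route from the paper's. The paper proves the second assertion by verifying the hypothesis of Theorem \ref{thm:constantrankthmmain}: it shows, via a dimension count in the formal stalks (using Remark \ref{rem:dvarphivsvarphi2} and the normal form of Theorem \ref{thm:rankthm}, the elements $g'_i-x_{n'+i}$ and $h'_j-y_{k'+j}$ furnishing enough of $\ker\varphi^*_{b',0}$ to fill out $\ker\varphi^*_{b',2}$, whose dimension is $n+k-n'-k'$ by injectivity of $d\varphi_{b'}$), that $\varphi^*_{b',\mathrm{k}}$ is surjective at every nearby point, and then invokes Theorem \ref{thm:constantrankthmmain} as a black box. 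You instead bypass that theorem and rerun its proof in the special case at hand, observing that the only place where the pointwise surjectivity is actually used in \S\,\ref{subsec:rankthm} is Lemma \ref{lem:ghinpf2} --- whose three assertions become vacuous precisely when $r_1=n'$ and $r_2+r_3=k'$, since there are then no transverse variables $u_l$ ($l>r_1$) or $z_{j'}$ ($j'>r_2+r_3$) for the coefficients of $g_i,h_j$ to depend on; hence $p'$ is the identity, Lemma \ref{lem:borelforconsrank} follows from Lemma \ref{lem:zetarsur} alone, and the triangular target change of coordinates finishes as in the proof of Theorem \ref{thm:constantrankthmmain}. Your numerical step and the final computation $(\vartheta\circ\varphi)^*(x_{r_1+i})=g_i-g_i=0$ agree with the paper. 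What the paper's route buys is a reusable intermediate fact (constant-rank immersions satisfy the surjectivity criterion pointwise), which is what makes the proposition literally a corollary of the general characterization; what your route buys is a self-contained argument that makes transparent \emph{why} no extra hypothesis beyond constant rank is needed for immersions.
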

\begin{proof} 
The first assertion is clear. For the second one, we claim that the map 
\[\varphi_{b',\mathrm{k}}^*:\ \ker \varphi^*_{b',1}\rightarrow \ker \varphi^*_{b',2}\] is surjective if $\varphi$ has constant rank $\bm{r}$ near $b'\in M'$ and it is an immersion at $b'$.
Note that, according to Theorem \ref{thm:constantrankthmmain}, the section assertion is implied by this claim. 

In what follows, we prove the claim.
By Theorem \ref{thm:rankthm}, we may (and do) assume  $\varphi$ is as in Example \ref{ex:constantrank}, $b'=0\in N'$ and $\overline{\varphi}(b')=0\in N$. Recall that we have the $\C$-algebra identifications 
\[
\widehat{\CO}_{\overline{\varphi}(b')}=\C[[x_1,\dots,x_n,y_1,\dots,y_k]]\ \text{and}\ 
\widehat{\CO'}_{b'}=\C[[u_1,\dots,u_{n'},z_1,\dots,z_{k'}]],
\]
and that  the canonical homomorphisms $\CO(M)\rightarrow \widehat{\CO}_{\overline{\varphi}(b')}$ and 
$\CO'(M')\rightarrow \widehat{\CO'}_{b'}$ are given as in \eqref{eq:taylorexp} (with $b'=0$). Consider the morphism\[ \varphi^*_{b'}:\ \wh{\CO}_{\overline{\varphi}(b')}\rightarrow \wh{\CO'}_{b'}\] between formal stalks.
One concludes from \eqref{eq:defconsrankmor}  
that 
\[
\varphi_{b'}^*(x_i)=u_i,\quad \varphi_{b'}^*(x_{n-r_2+i'})=z_{i'},\quad \varphi_{b'}^*(y_j)=z_{j+r_2}
\]
for $1\le i\le n'$, $1\le i'\le r_2$ and $1\le j\le r_3$.
Thus, the restriction of 
$ \varphi^*_{b'}: \wh{\CO}_{\overline{\varphi}(b')}\rightarrow \wh{\CO'}_{b'}$ 
on 
\[A:=\C[[x_1,\dots,x_{n'},x_{n-r_2+1},\dots,x_{n},y_{1},\dots,y_{r_3}]]
\] is an isomorphism.

For each $1\leq i\leq n-n'-r_2$ and $1\leq j\leq k-r_3$, take  $g'_{i}$ and $h'_{j}$ in $A$ such that 
$\varphi_{b'}^*(g'_i)$ and $\varphi_{b'}^*(h_j')$ are respectively the image of $g_i$ and $h_j$ (see Example \ref{ex:constantrank}) under the map $\CO'(M')\rightarrow \wh{\CO'}_{b'}$.
 Then \[\{g'_{i}-x_{n'+i}\}_{1\leq i\leq n-n'-r_2}\cup \{ h'_{j}-y_{k'+j}\}_{1\leq j\leq k-r_3} \] is a subset of $\ker \varphi^*_{b',0}$, and its image 
 \[\{(g'_{i}-x_{n'+i})+(\m_{b'}')^2\}_{1\leq i\leq n-n'-r_2}\cup \{ (h'_{j}-y_{k'+j})+(\m_{b'}')^2  \}_{1\leq j\leq k-r_3}\]
 under the map $\varphi^*_{b',\mathrm{k}}$ is a linearly independent subset in $\ker \varphi^*_{b',2}$. On the other hand, since $\varphi$ is an immersion at $b'$, it follows from Remark \ref{rem:dvarphivsvarphi2} that 
 \[\mathrm{{dim}} \ker \varphi^*_{b,2}=n+k-n'-k'= n+k-n'-r_2-r_3.\] This implies that the map $\varphi^*_{b',\mathrm{k}}$
is surjective, as required. 


\end{proof}

Proposition \ref{prop:localimmersion}, together with Lemma \ref{lem:regularmor}, implies the following result.

\begin{cord}\label{cor:desregimmer}
If  $\varphi$ is a regular immersion at $b$, then 
there exists a chart $(U',(N')^{(k')},\vartheta')$ of $M'$ containing $b$ and a chart $(U,N^{(k)},\vartheta)$ of $M$ containing $\overline{\varphi}(b)$ such that $\overline{\varphi}(U')\subset U$ and the morphism 
\[
 \vartheta\circ   \varphi|_{U'}\circ (\vartheta')^{-1}:          (N')^{(k')}\rightarrow N^{(k)}  
\] is given by
\begin{eqnarray*}
&& (x_1,\dots,x_{n'},x_{n'+1},\dots,x_n,y_1,\dots,y_{k'},y_{k'+1},\dots,y_k)\\
&\mapsto& (u_1,\dots,u_{n'},\underbrace{0,\dots,0}_{n-n'},z_1,\dots,z_{k'},\underbrace{0,\dots,0}_{k-k'}).
\end{eqnarray*}
\end{cord}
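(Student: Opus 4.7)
The plan is to derive this result by combining Lemma~\ref{lem:regularmor} with Proposition~\ref{prop:localimmersion}, the work being essentially a rank bookkeeping that forces the middle index $r_2$ in the standard form of Proposition~\ref{prop:localimmersion} to vanish. Since $\varphi$ is a regular immersion at $b$, Lemma~\ref{lem:regularmor}(2) gives at once that $\varphi$ has constant rank near $b$, so Proposition~\ref{prop:localimmersion} applies and produces charts in which $\varphi$ has the normal form
\begin{eqnarray*}
&& (x_1,\dots,x_{n'},x_{n'+1},\dots ,x_{n-r_2},x_{n-r_2+1},\dots,x_n,y_1,\dots,y_{r_3},y_{r_3+1},\dots,y_k)\\
&\mapsto& (u_1,\dots,u_{n'},\underbrace{0,\dots,0}_{n-n'-r_2},z_1,\dots,z_{r_2},z_{r_2+1},\dots,z_{k'},\underbrace{0,\dots,0}_{k-r_3})
\end{eqnarray*}
with $r_1=n'$ and $r_2+r_3=k'$ for some $r_2,r_3\in\mathbb{N}$.

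The second step is to identify $r_2$. Unwinding Definition~\ref{de:rankvarphia}, the triple $\mathrm{Rank}_b(\varphi)=(r_1+r_2+r_3,r_1,r_3)$ reads
\[
\mathrm{Rank}(d\varphi_b)=r_1+r_2+r_3,\qquad \mathrm{Rank}(d\underline{\varphi}_b)=r_1,\qquad \mathrm{Rank}(d\varphi_b^{\mathrm{f}})=r_3.
\]
The regularity hypothesis $\mathrm{Rank}(d\varphi_b)=\mathrm{Rank}(d\underline{\varphi}_b)+\mathrm{Rank}(d\varphi_b^{\mathrm{f}})$ therefore forces $r_2=0$.

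Substituting $r_1=n'$, $r_2=0$, $r_3=k'$ into the normal form above collapses it to
\begin{eqnarray*}
&& (x_1,\dots,x_{n'},x_{n'+1},\dots,x_n,y_1,\dots,y_{k'},y_{k'+1},\dots,y_k)\\
&\mapsto& (u_1,\dots,u_{n'},\underbrace{0,\dots,0}_{n-n'},z_1,\dots,z_{k'},\underbrace{0,\dots,0}_{k-k'}),
\end{eqnarray*}
which is precisely the expression asserted in the corollary. There is no real obstacle here: all of the heavy lifting (local constancy of rank, surjectivity of $\varphi^*_{b',\mathrm{k}}$ for immersions, and the normal-form construction via Theorem~\ref{thm:rankthm} and Theorem~\ref{thm:constantrankthmmain}) has already been done in Proposition~\ref{prop:localimmersion}; the only content to add is the two-line rank calculation that kills the $r_2$ slot.
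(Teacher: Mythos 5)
Your proposal is correct and follows exactly the paper's route: the paper derives this corollary by citing Lemma \ref{lem:regularmor} and Proposition \ref{prop:localimmersion}, and your explicit computation that regularity forces $r_2=0$ (hence $r_3=k'$) is precisely the bookkeeping left implicit there.
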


\section{Formal submanifolds}\label{sec:submanifold}

In this section, we study the theory of formal submanifolds. 

\subsection{Proof of Theorem \ref{thm:charclosedsub}} \label{subsec:charcloembd}
This subsection is devoted to a proof of Theorem \ref{thm:charclosedsub}. 

 Let $n, n', r,k,k'\in \BN$ with $n-n'=r$, and let $N$ be an open subset of $\BR^n$ containing the origin. 
Let $((N')^{(r+k)},\zeta_{\bm r})$ denote the $(n',r,r+k)$-slice  of $N^{(k)}$ (see Definition \ref{def:defzetar}). Recall that $N'=N\cap (\BR^{n'}\times\{0\}^r)$, and 
\[\zeta_{\bm r}:\ (N')^{(r+k)}\rightarrow N^{(k)}\]
is the morphism  given by 
  \[
(x_1,\dots,x_{n'},x_{n'+1},\dots,x_n,y_1,\dots,y_k)\mapsto (u_1,\dots,u_{n'},z_1,\dots,z_r,z_{r+1},\dots,z_{r+k}),
  \]
where  $(x_1,\dots,x_n,y_1,\dots,y_k)$ and  $(u_1,\dots,u_{n'},z_1,\dots,z_{r+k})$ are respectively the standard coordinate systems of $N^{(k)}$ and $(N')^{(r+k)}$. 
\begin{lemd}\label{lem:constructphi}
Let $\psi=(\overline\psi,\psi^*): M\rightarrow N^{(k)}$ be a morphism of formal manifolds such that $
 \overline{\psi}(M)\subset N'$. 
  Then there is a unique  morphism $\phi: M\rightarrow (N')^{(r+k)}$  such that the diagram
  \[\xymatrix{ &(N')^{(r+k)}\ar[d]^{\zeta_{\bm r}}\\
  M\ar[r]^{\psi}\ar[ur]^{\phi}&N^{(k)}}
  \]  commutes. 
\end{lemd}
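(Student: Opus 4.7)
The plan is to reduce the lemma to Proposition~\ref{prop:genebij}, which classifies morphisms into $(N')^{(r+k)}$ by tuples of formal functions, together with Lemma~\ref{lem:mortoformaleq} for uniqueness. Let $(x_1,\dots,x_n,y_1,\dots,y_k)$ and $(u_1,\dots,u_{n'},z_1,\dots,z_{r+k})$ denote the standard coordinate systems of $N^{(k)}$ and $(N')^{(r+k)}$, respectively. Under $\zeta_{\bm r}^*$ these coordinates map as $x_i\mapsto u_i$ for $1\le i\le n'$, $x_{n'+i}\mapsto z_i$ for $1\le i\le r$, and $y_j\mapsto z_{r+j}$ for $1\le j\le k$.

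For uniqueness, I would observe that any morphism $\phi$ with $\zeta_{\bm r}\circ\phi=\psi$ must satisfy $\phi^*\circ\zeta_{\bm r}^*=\psi^*$; evaluating on the coordinate functions of $N^{(k)}$ forces
\[\phi^*(u_i)=\psi^*(x_i),\quad \phi^*(z_i)=\psi^*(x_{n'+i}),\quad \phi^*(z_{r+j})=\psi^*(y_j).\]
Since $c_\phi$ is now pinned down, Proposition~\ref{prop:genebij} (or directly Lemma~\ref{lem:mortoformaleq}) gives uniqueness.

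For existence, I would take the same formulas as the definition of the sought $c_\phi$ and invoke Proposition~\ref{prop:genebij} in the opposite direction to produce $\phi\colon M\rightarrow(N')^{(r+k)}$. Two conditions must be verified: (i) $(\psi^*(x_1),\dots,\psi^*(x_{n'}))\in\CO(M;N')$, and (ii) $\psi^*(x_{n'+i})$ for $1\le i\le r$ and $\psi^*(y_j)$ for $1\le j\le k$ all lie in $\m_{\CO}(M)$. Both follow at once from the hypothesis $\overline{\psi}(M)\subset N'$: for each $a\in M$ the point $\overline{\psi}(a)=(\psi^*(x_1)(a),\dots,\psi^*(x_n)(a))$ has its last $r$ coordinates equal to zero and its first $n'$ coordinates land in $N'\subset\BR^{n'}$, while $\psi^*(y_j)\in\m_{\CO}(M)$ because $y_j$ is a formal variable. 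Finally, I would confirm $\zeta_{\bm r}\circ\phi=\psi$ by observing that $c_{\zeta_{\bm r}\circ\phi}$ and $c_\psi$ coincide by construction, and then applying Lemma~\ref{lem:mortoformaleq}.

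The routine checks are straightforward; the only genuine (mild) subtlety is keeping track of the two realizations of $N'$ --- as a subset of $\BR^n$ through \eqref{eq:N'} (so that $\overline{\zeta_{\bm r}}$ has the right image) and as an open subset of $\BR^{n'}$ underlying the formal manifold $(N')^{(r+k)}$. I would dispose of this at the outset by fixing the projection identification, after which both existence and uniqueness become an essentially formal consequence of Proposition~\ref{prop:genebij}.
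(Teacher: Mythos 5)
Your proposal is correct and follows essentially the same route as the paper: both reduce the statement to Proposition \ref{prop:genebij} and Lemma \ref{lem:mortoformaleq} by computing $c_{\zeta_{\bm r}\circ\phi}$ in coordinates, pinning down $c_\phi$ as $(\psi^*(x_1),\dots,\psi^*(x_n),\psi^*(y_1),\dots,\psi^*(y_k))$, and checking that this tuple lies in $\CO(M;N')\times(\m_{\CO}(M))^{r+k}$ using $\overline{\psi}(M)\subset N'$. The only cosmetic difference is that you separate existence from uniqueness, whereas the paper phrases both at once as an if-and-only-if characterization of $\phi$.
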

\begin{proof} 
Let $\phi=(\overline{\phi},\phi^*): M \rightarrow (N')^{(r+k)} $ be a morphism of formal manifolds. Note that  the composition $\zeta_{\bm r}\circ \phi: M\rightarrow N^{(k)}$ is given by 
\begin{eqnarray*}
&&(x_1,\dots,x_{n'},x_{n'+1},\dots,x_{n},y_{1},\dots,y_k)\\
&\mapsto&  
(\phi^*(u_1),\dots,\phi^*(u_{n'}),\phi^*(z_{1}),\dots,\phi^
*(z_r),\phi^*(z_{r+1}),\dots,\phi^*(z_{r+k})).    
\end{eqnarray*}
This together with Lemma \ref{lem:mortoformaleq} implies that $\psi=\zeta_{\bm r}\circ \phi$ if and only if the morphism $\phi$ is given by 
\[
(u_1,\dots,u_{n'},z_1,\dots,z_r,z_{r+1},\dots,z_{r+k})\mapsto \bm{f},
\]
where 
\[\bm{f}:=(\psi^*(x_1),\dots,\psi^*(x_{n'}),
\psi^*(x_{n'+1}),\dots,\psi^*(x_{n}),
\psi^*(y_1),\dots,\psi^*(y_k)).
\]

 For any $a\in M$,  the condition $\overline{\psi}(M)\subset N'$ implies that \[(\psi^*(x_1),\dots,\psi^*(x_{n'}))\in \CO(M;N')\] and that $(\psi^*(x_{n'+i}))(a)=0$ for all $i=1,2,\dots,r$. 
Therefore
\[\bm{f}\in \CO(M;N')\times (\m_{\CO}(M))^{r+k}.\] Then the lemma follows by  Proposition \ref{prop:genebij}. 
\end{proof}

In the rest of this subsection, 
 let \[\varphi=(\overline\varphi,\varphi^*): \ (M',\CO')\rightarrow (M,\CO)\] be a morphism of formal manifolds as in Theorem \ref{thm:charclosedsub}. 

\begin{lemd}\label{lem:sur}
    Assume that there is  an open cover $\{U_{\gamma}\}_{\gamma\in \Gamma}$ of $M$  such that the homomorphism \[\varphi_{U_\gamma}^*:\ \CO(U_{\gamma})\rightarrow \CO'(\overline{\varphi}^{-1}(U_{\gamma}))\] is surjective for every $\gamma\in \Gamma$. Then the homomorphism \[\varphi^*:\ \CO(M)\rightarrow\CO'(M')\] is surjective as well.
\end{lemd}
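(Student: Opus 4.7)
The plan is a partition-of-unity argument: globalize the hypothesized local surjectivity by patching local preimages with formal cutoff functions. Given $f'\in\CO'(M')$, the hypothesis provides, for each $\gamma$, an element $f_\gamma\in\CO(U_\gamma)$ with $\varphi^*_{U_\gamma}(f_\gamma)=f'|_{\overline\varphi^{-1}(U_\gamma)}$; by paracompactness of $M$ we may pass to a locally finite refinement of $\{U_\gamma\}$ (on which the hypothesis still holds by restriction). The key step is then to construct a formal partition of unity $\{\rho_\gamma\}\subset\CO(M)$ subordinate to the refined cover, meaning that each $\rho_\gamma$ has support in $U_\gamma$ (so extends by zero to an element of $\CO(M)$) and $\sum_\gamma\rho_\gamma=1$. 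Granted such $\{\rho_\gamma\}$, one sets $f:=\sum_\gamma\rho_\gamma f_\gamma\in\CO(M)$ and computes $\varphi^*(f)=\sum_\gamma\varphi^*(\rho_\gamma)\cdot f'|_{\overline\varphi^{-1}(U_\gamma)}=f'$, using that $\sum_\gamma\varphi^*(\rho_\gamma)=1$ in $\CO'(M')$.

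To build $\{\rho_\gamma\}$, I would begin with a smooth partition of unity $\{\underline{\rho}_\gamma\}$ on $\underline M$ subordinate to $\{U_\gamma\}$, available by paracompactness. Lift each $\underline{\rho}_\gamma|_{U_\gamma}$ to $\widetilde{\rho}_\gamma\in\CO(U_\gamma)$ using the surjection $\CO(U_\gamma)\twoheadrightarrow\underline\CO(U_\gamma)$ (cf.\,\cite[Proposition 2.14]{CSW1}). To arrange that $\widetilde{\rho}_\gamma$ vanishes in a neighborhood of $U_\gamma\setminus\mathrm{supp}(\underline{\rho}_\gamma)$ -- so that extension by zero yields a bona fide element of $\CO(M)$ -- I would use the softness of $\m^r_\CO$ (\cite[Corollary 2.13]{CSW1}) order by order in $r$, combined with the identification $\CO=\varprojlim_r\CO/\m_\CO^r$ of Proposition \ref{prop:moge}. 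Summing gives $\sigma:=\sum_\gamma\widetilde{\rho}_\gamma\in\CO(M)$, a locally finite sum that reduces to $1$ modulo $\m_\CO$; writing $\sigma=1+\epsilon$ with $\epsilon\in\m_\CO(M)$, the element $1+\epsilon$ is invertible in $\CO(M)$ (its formal inverse $\sum_{r\ge 0}(-\epsilon)^r$ converges in the inverse-limit topology), and $\rho_\gamma:=\widetilde\rho_\gamma\cdot\sigma^{-1}$ yields the desired partition of unity.

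The main obstacle is the inductive cutoff procedure: ensuring that the local lifts $\widetilde{\rho}_\gamma$ can be simultaneously modified at every formal order $r$ so that the resulting element of $\CO(U_\gamma)$ has honest support in $\mathrm{supp}(\underline{\rho}_\gamma)$, and that these modifications assemble coherently into a single formal function via Proposition \ref{prop:moge}. An alternative route, which I would keep in reserve if the explicit construction proves cumbersome, is to note that $\CK:=\ker(\varphi^*\colon\CO\to\overline\varphi_*\CO')$ is a sheaf of $\CO$-module ideals on $M$; once $\CO$ is shown to admit partitions of unity (equivalently, to be a fine sheaf of $\BC$-algebras), $\CK$ is automatically soft, so $H^1(M,\CK)=0$, and the short exact sequence $0\to\CK\to\CO\to\overline\varphi_*\CO'\to 0$ yields surjectivity on global sections directly.
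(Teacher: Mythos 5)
Your proof is essentially the paper's: given $f'\in\CO'(M')$, choose local preimages $h_\gamma$ over the cover, multiply by a partition of unity subordinate to that cover, and sum. The only real difference is that you attempt to construct the formal partition of unity from scratch (and correctly flag the support-cutoff step as the main obstacle), whereas the existence of partitions of unity on a formal manifold subordinate to any open cover is already available as \cite[Proposition 2.3]{CSW1}, which the paper simply cites; with that citation in hand your argument closes immediately and the entire second and third paragraphs of your proposal become unnecessary.
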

\begin{proof}  Let $f\in \CO'(M')$. 
 For each $\gamma\in \Gamma$, we choose a formal function $h_{\gamma}\in \CO(U_{\gamma})$ such that $\varphi^*_{U_{\gamma}}(h_{\gamma})=f|_{\overline\varphi^{-1}(U_{\gamma})}$.
Let $\{g_\gamma\}_{\gamma\in \Gamma}$ be a partition of unity on $M$ subordinate to the open cover  $\{U_{\gamma}\}_{\gamma\in \Gamma}$ (see \cite[Proposition 2.3]{CSW1}).  
Then it is straightforward that 
\[\varphi^*\left(\sum_{\gamma\in\Gamma}
g_{\gamma}h_{\gamma}\right)=f.\]
This implies the lemma.
   \end{proof}

The following result follows easily from Lemma \ref{lem:taninj}, \eqref{eq:chainrule}, and \eqref{eq:underlinecommdiag}.

\begin{lemd}\label{lem:infinj}
Suppose  $\varphi$ is an immersion at a point $b\in M'$. 
 Then $\underline\varphi: \underline{M'}\rightarrow \underline{M}$ and $\varphi_b: M'_b \rightarrow M_{\overline{\varphi}(b)}$  are immersions at $b$ as well.
\end{lemd}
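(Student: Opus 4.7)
The plan is a straightforward diagram chase using the two commutative squares in \eqref{eq:underlinecommdiag}. Taking the differentials at $b$ of each square and invoking the chain rule \eqref{eq:chainrule}, I obtain commutative diagrams of linear maps of tangent spaces:
\[
\begin{CD}
\mathrm{T}_b(\underline{M'}) @> d\underline{\varphi}_b >> \mathrm{T}_{\overline\varphi(b)}(\underline{M})\\
@V \alpha_b VV @VV \beta_{\overline\varphi(b)} V\\
\mathrm{T}_b(M') @> d\varphi_b >> \mathrm{T}_{\overline\varphi(b)}(M)
\end{CD}
\qquad\text{and}\qquad
\begin{CD}
\mathrm{T}_b(M'_b) @> d(\varphi_b)_b >> \mathrm{T}_{\overline\varphi(b)}(M_{\overline\varphi(b)})\\
@V \alpha'_b VV @VV \beta'_{\overline\varphi(b)} V\\
\mathrm{T}_b(M') @> d\varphi_b >> \mathrm{T}_{\overline\varphi(b)}(M),
\end{CD}
\]
where the vertical arrows are the differentials induced by the canonical morphisms $\underline{M'}\to M'$, $\underline{M}\to M$, $M'_b\to M'$, and $M_{\overline\varphi(b)}\to M$.

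Next, I would invoke Lemma \ref{lem:taninj}: the vertical maps $\alpha_b$ and $\beta_{\overline\varphi(b)}$ are injective, while $\alpha'_b$ and $\beta'_{\overline\varphi(b)}$ are in fact bijective. Combined with the hypothesis that $d\varphi_b$ is injective (since $\varphi$ is an immersion at $b$), this gives the desired conclusions by a routine diagram chase: if $\eta\in \mathrm{T}_b(\underline{M'})$ satisfies $d\underline{\varphi}_b(\eta)=0$, then $d\varphi_b(\alpha_b(\eta))=\beta_{\overline\varphi(b)}(d\underline{\varphi}_b(\eta))=0$, so $\alpha_b(\eta)=0$ by injectivity of $d\varphi_b$, and hence $\eta=0$ by injectivity of $\alpha_b$. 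The same argument works verbatim for $d(\varphi_b)_b$.

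There is no real obstacle here; the result is essentially a formal consequence of Lemma \ref{lem:taninj} together with the functoriality of differentials encoded in \eqref{eq:chainrule}. The only care needed is to make sure the vertical maps are correctly identified from the reduction and formal-stalk constructions, and to cite \eqref{eq:dulphi} (for the left square) to match the notation $d\underline{\varphi}_b$.
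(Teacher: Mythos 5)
Your proof is correct and is exactly the argument the paper intends: the paper's own "proof" is the single sentence that the lemma "follows easily from Lemma \ref{lem:taninj}, \eqref{eq:chainrule}, and \eqref{eq:underlinecommdiag}," and your diagram chase is precisely the routine expansion of that citation.
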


Recall from \cite[Definition 5.6]{CSW1} that 
a formal $\BC$-algebra is a topological $\BC$-algebra that is topologically isomorphic to $\CO(M)$ for some formal manifold $(M,\CO)$.
There is an obvious  contravariant functor
\begin{eqnarray}\label{eq:functor1}
   (M,\CO)\mapsto \CO(M)
\end{eqnarray}
from the category of formal manifolds to the category of formal $\C$-algebras. 
Conversely, it is proved in \cite[Theorem 5.11]{CSW1} that there is a contravariant functor 
\begin{eqnarray}\label{eq:functor2}
  A\mapsto (\mathrm{Specf}(A),\CO_A)
\end{eqnarray}
from the category of  formal $\C$-algebras to the category of  formal manifolds  such that \eqref{eq:functor1} and \eqref{eq:functor2} are quasi-inverse of each other.

 \begin{lemd}\label{lem:submanifoldpre1} 
 Suppose  $M'=N^{(k')}$, $M=N^{(k)}$,  
 $\varphi$ is   an immersion and its reduction $\underline{\varphi}:N\rightarrow N$ is the identity map. 
 Then the  homomorphism 
\[\varphi^*:\ \CO(M)\rightarrow\CO'(M')\]
 is surjective. 
\end{lemd}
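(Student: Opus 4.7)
My plan is to show that the hypotheses force $\varphi$ to be a regular immersion at every point, invoke Corollary \ref{cor:desregimmer} to put $\varphi$ in a standard local form where the pullback is visibly surjective, and then conclude globally via Lemma \ref{lem:sur}.

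First I verify that $\varphi$ is a regular immersion at every $b \in M' = N$. Since $\underline\varphi = \mathrm{id}_N$, one has $\dim_b M' = \dim_b M =: n$ and $d\underline\varphi_b$ is the identity of rank $n$; the immersion hypothesis then gives $\mathrm{Rank}(d\varphi_b) = n + k'$. A short diagram chase using the inclusion $\mathrm{T}_b(\underline{M'}) \hookrightarrow \mathrm{T}_b(M') \xrightarrow{d\varphi_b} \mathrm{T}_b(M)$ shows that $d\varphi_b^{\mathrm f}$ is injective: any $\tilde\eta \in \mathrm{T}_b(M')$ with $d\varphi_b(\tilde\eta) \in \mathrm{T}_b(\underline{M})$ can be corrected by an element of $\mathrm{T}_b(\underline{M'})$ (using the bijectivity of $d\underline\varphi_b$), and the difference is killed by the injectivity of $d\varphi_b$. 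Hence $\mathrm{Rank}(d\varphi_b^{\mathrm f}) = k'$ and
\[
\mathrm{Rank}(d\varphi_b) = n + k' = \mathrm{Rank}(d\underline\varphi_b) + \mathrm{Rank}(d\varphi_b^{\mathrm f}),
\]
which is regularity (and shows in particular $k' \le k$).

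Next, by Corollary \ref{cor:desregimmer}, for each $b \in M'$ there exist charts $(U', (N')^{(k')}, \vartheta')$ around $b$ and $(U, N^{(k)}, \vartheta)$ around $\overline\varphi(b) = b$, with $\overline\varphi(U') \subset U$, in which $\varphi|_{U'}$ reads
\[
(x_1,\dots,x_n,y_1,\dots,y_{k'},y_{k'+1},\dots,y_k) \mapsto (u_1,\dots,u_n,z_1,\dots,z_{k'},0,\dots,0),
\]
the underlying smooth map becoming an open inclusion $N' \hookrightarrow N$. Since $\overline\varphi$ is the identity homeomorphism, $U' = \overline\varphi(U')$ is itself open in $M$, so I shrink $U$ down to $U'$ and obtain a chart $(U', (N')^{(k)}, \vartheta|_{U'})$ of $M$. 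In these coordinates the pullback becomes the ``set the extra formal variables to zero'' homomorphism
\[
\RC^\infty(N')[[y_1,\dots,y_k]] \to \RC^\infty(N')[[z_1,\dots,z_{k'}]], \qquad \sum_I c_I(x)\,y^I \;\mapsto\; \sum_{J' \in \BN^{k'}} c_{(J',0)}(u)\,z^{J'},
\]
which is manifestly surjective since any $\sum_{J'} a_{J'}(u)\,z^{J'}$ in the target is the image of $\sum_{J'} a_{J'}(x)\,y_1^{j'_1}\cdots y_{k'}^{j'_{k'}}$.

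As $b$ varies, the shrunken opens $U'$ cover $M = M'$, and on each of them $\varphi^*_{U'}$ is surjective; Lemma \ref{lem:sur} (using $\overline\varphi^{-1}(U') = U'$) then yields surjectivity of $\varphi^* \colon \CO(M) \to \CO'(M')$. The main obstacle is the regularity check at the outset, which is precisely what unlocks Corollary \ref{cor:desregimmer}; once the standard local form is available, the shrinking step exploits $\overline\varphi = \mathrm{id}$ to eliminate any mismatch between the source and target open sets and thus trivializes the remaining local surjectivity question.
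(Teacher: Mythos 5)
Your proof is correct, and it takes a genuinely different route from the paper's. The paper argues directly with the Jacobian: since $\underline{\varphi}=\mathrm{id}$ and $\varphi$ is an immersion, $J_a$ has full column rank $n+k'$ with $F_a$ the identity, so one can choose indices $\nu_1,\dots,\nu_{k'}$ making a suitable $(n+k')\times(n+k')$ minor invertible; the restriction of $\varphi^*$ to the subalgebra $\RC^\infty(N)[[y_{\nu_1},\dots,y_{\nu_{k'}}]]$ then defines a morphism whose differential is everywhere bijective, and the inverse function theorem (Theorem \ref{thm:invfun}) shows this restriction is locally an isomorphism, giving local surjectivity; the conclusion via Lemma \ref{lem:sur} is the same as yours. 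You instead observe that the hypotheses force $\varphi$ to be a \emph{regular} immersion at every point (your diagram chase for the injectivity of $d\varphi_b^{\mathrm f}$ is correct, and equivalently one can read $\mathrm{Rank}(H_b)=k'$ off the Jacobian), and then invoke the regular immersion normal form of Corollary \ref{cor:desregimmer}; the shrinking of $U$ to $U'$ using $\overline{\varphi}=\mathrm{id}$ is legitimate, and the resulting ``kill the extra formal variables'' map is indeed manifestly surjective. Your route leans on heavier machinery --- Corollary \ref{cor:desregimmer} rests on Proposition \ref{prop:localimmersion} and Theorem \ref{thm:constantrankthmmain}, whereas the paper needs only Theorem \ref{thm:invfun} --- but there is no circularity, since all of Section 3.4 is established independently of Section 4; in exchange, your argument makes the local structure of $\varphi$ completely explicit and avoids the minor-selection step.
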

\begin{proof}  
Denote by $(x_1,x_2,\dots,x_{n},z_1,z_2,\dots,z_{k'})$  the standard coordinate systems of $N^{(k')}$.
For $i=1,2,\dots,n$ and $j=1,2,\dots,k$, write 
   \begin{eqnarray*}
\varphi^*(x_i)&=&x_i+\sum_{j'=1}^{k'} g_{ij'} z_{j'}+\sum_{J\in \BN^{k'}, |J|\ge 2} g_{iJ} z^J,\\
\varphi^*(y_j)&=&\sum_{j'=1}^{k'} h_{jj'} z_{j'}+\sum_{J\in \BN^{k'}, |J|\ge 2} h_{jJ} z^J,
              \end{eqnarray*}
              where $g_{ij'}, g_{iJ}, h_{jj'}, h_{jJ}\in \RC^\infty(N)$. 
 Since $\varphi$ is an immersion, one concludes from Example \ref{ex:tanvec} that  for every $a\in N$, the Jacobian   
\[
J_a=\left(
  \begin{array}{cc}
    F_a & G_a \\
    0 & H_a \\
  \end{array}
\right)
\]
of $\varphi$ at $a$  has rank $n+k'$, where $F_a$ is the identity matrix of size  $n$,
\[ 
G_a=\left(g_{ij'}(a)\right)_{\substack{1\le i\le n,\,1\le j'\le k'}}\quad \text{and}\quad  H_a=\left(h_{jj'}(a)\right)_{\substack{1\le j\le k,\,1\le j'\le k'}}.\]

 Choose a subset $\{\nu_1,\nu_2,\dots,\nu_{k'}\}$ 
    of $\{1,2,\dots,k\}$ such that the submatrix
    \be\label{eq:Ja}
   \left(
  \begin{array}{cc}
    F_a & G_a \\
    0 & H_a' \\
  \end{array}
\right) \ee 
   of $J_a$ is non-singular, where \[H_a':=\left(h_{\nu_{j'} j'}(a)\right)_{1\le j'\le k'}.\]
Note that  the restriction of  \[\varphi^*:\ \CO(M)=\RC^{\infty}(N)[[y_1,y_2,\dots,y_k]]\rightarrow\CO'(M')= \RC^{\infty}(N')[[z_1,z_2,\dots,z_{k'}]]\] on 
 $A:= \RC^\infty(N)[[y_{\nu_1},y_{\nu_2},\dots,y_{\nu_{k'}}]]$
induces a morphism 
\[\psi=(\overline{\psi},\psi^*):\ M'=\mathrm{Specf}(\CO'(M')) \rightarrow \mathrm{Specf}(A)\]
through the functor \eqref{eq:functor2}. Precisely, $\overline{\psi}: N\rightarrow N$ is the identity morphism, and $\psi^*_U$ is the map 
\[\CO_A(U)=\RC^{\infty}(U)[[y_{\nu_1},y_{\nu_2},\dots,y_{\nu_{k'}}]]\xrightarrow {\varphi^*_{U}|_{\RC^{\infty}(U)[[y_{\nu_1},y_{\nu_2},\dots,y_{\nu_{k'}}]]}}\CO'(U)\] for every open subset $U$ of $N$.
Since the Jacobian of $\psi$ at $a$ equals \eqref{eq:Ja}, it follows from 
  Theorem \ref{thm:invfun} that there is an open neighborhood $U_a$ of $a$ in $N$ such that 
   the restriction of 
   \[
   \varphi_{U_a}^*:\ \CO(U_a)=
   \RC^\infty(U_a)[[y_1,y_2,\dots,y_k]]
   \rightarrow  \CO'(U_a)=\RC^\infty(U_a)[[z_1,z_2,\dots,z_{k'}]]
   \] on $ \RC^\infty(U_a)[[y_{\nu_1},y_{\nu_2},\dots,y_{\nu_{k'}}]]$  is an isomorphism.
   Thus, we obtain  an open cover $\{U_a\}_{a\in N}$ of $N$ such that  the homomorphism  \[
   \varphi_{U_a}^*:\  \CO(U_a)
   \rightarrow \CO'(U_a)\]
  is surjective for every $a\in N$. 
  The lemma then follows from Lemma \ref{lem:sur}.
\end{proof}

\begin{lemd}\label{lem:homsurkey}  Suppose  $M'=(N')^{(k')}$, $M=N^{(k)}$,  
 $\varphi$ is   an immersion and its reduction $\underline{\varphi}: N'\rightarrow N$ is the natural inclusion. 
  Then  the homomorphism 
  \[\varphi^*:\ \CO(M)\rightarrow\CO'(M')\] 
  is surjective.
\end{lemd}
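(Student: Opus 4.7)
The plan is to factor $\varphi$ through the slice morphism $\zeta_{\bm r}$ of Definition~\ref{def:defzetar} and then invoke Lemmas~\ref{lem:submanifoldpre1} and~\ref{lem:zetarsur}. Setting $r := n - n'$, the assumption that $\underline{\varphi}$ is the natural inclusion identifies $N'$ with the slice subset $N\cap(\BR^{n'}\times\{0\}^r)$ appearing in Definition~\ref{def:defzetar}, so the slice morphism $\zeta_{\bm r}:(N')^{(r+k)}\rightarrow N^{(k)}$ is at hand. Since $\overline{\varphi}(N')=N'$, Lemma~\ref{lem:constructphi} applied with $\psi=\varphi$ produces a unique morphism $\phi:(N')^{(k')}\rightarrow (N')^{(r+k)}$ satisfying $\zeta_{\bm r}\circ\phi=\varphi$. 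Note that, as a sanity check, the immersion hypothesis on $\varphi$ forces $k'\le r+k$, which is exactly the inequality needed so that an immersion from $(N')^{(k')}$ into $(N')^{(r+k)}$ can exist.

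Next I would verify that $\phi$ falls within the scope of Lemma~\ref{lem:submanifoldpre1}, taking the role of $N$ there to be our $N'$ and the role of $k$ to be $r+k$. Its reduction $\underline{\phi}:N'\rightarrow N'$ equals $\mathrm{id}_{N'}$: from $\underline{\zeta_{\bm r}}\circ\underline{\phi}=\underline{\varphi}$, observing that both $\underline{\zeta_{\bm r}}$ and $\underline{\varphi}$ are the same natural inclusion $N'\hookrightarrow N$ and that $\underline{\zeta_{\bm r}}$ is in particular injective. Moreover, $\phi$ is an immersion: by the chain rule~\eqref{eq:chainrule} one has $d\varphi_b=d\zeta_{\bm r}\circ d\phi_b$ for every $b\in N'$, and the injectivity of $d\varphi_b$ (the hypothesis on $\varphi$) immediately forces injectivity of $d\phi_b$.

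With these two properties in hand, Lemma~\ref{lem:submanifoldpre1} yields surjectivity of $\phi^*:\CO_{N'}^{(r+k)}(N')\rightarrow \CO_{N'}^{(k')}(N')$, while Lemma~\ref{lem:zetarsur} supplies surjectivity of $\zeta_{\bm r}^*:\CO_{N}^{(k)}(N)\rightarrow \CO_{N'}^{(r+k)}(N')$. The composite $\varphi^*=\phi^*\circ\zeta_{\bm r}^*$ is then a composition of surjective maps and hence surjective, as required. No substantial obstacle arises once the identification of $N'$ with the slice subset is made explicit; from that point on the argument is a clean assembly of previously established results, and in particular no new local/partition-of-unity analysis is needed.
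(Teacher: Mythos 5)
Your proposal is correct and follows essentially the same route as the paper: factor $\varphi$ as $\zeta_{\bm r}\circ\phi$ via Lemma \ref{lem:constructphi}, check that $\phi$ is an immersion with identity reduction, and conclude by composing the surjections supplied by Lemmas \ref{lem:submanifoldpre1} and \ref{lem:zetarsur}. You also supply the details the paper leaves as ``easy to check'' (the chain rule argument for $d\phi_b$ and the identification of $\underline{\phi}$ with the identity), which is a welcome addition.
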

\begin{proof}
    By Lemma \ref{lem:constructphi}, there is a unique morphism $\phi: (N')^{(k')}\rightarrow (N')^{(r+k)}$ such that 
$\zeta_{\bm r}\circ \phi= \varphi$.
 Easy to check that $\phi$ is an immersion and its reduction is the identity map. 
The lemma then follows from Lemmas \ref{lem:submanifoldpre1}  and \ref{lem:zetarsur}.    
\end{proof}
\begin{lemd}\label{eq:surofstalk}
     Let $b\in M'$. If  the  homomorphism $\varphi^*_b: \CO_{\overline\varphi(b)}\rightarrow \CO'_{b}$ is surjective, then  the morphism $\varphi$ is an immersion at $b$.
\begin{proof}
    The lemma is clear by the definition of $\varphi^*_b$ (see \eqref{eq:diffmapata}). 
\end{proof}
\end{lemd}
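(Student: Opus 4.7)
The plan is to unpack the definition of the differential $d\varphi_b$ in terms of the local homomorphism $\varphi_b^*$, and then use the surjectivity hypothesis to show $d\varphi_b$ is injective, which is by definition the statement that $\varphi$ is an immersion at $b$.

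Concretely, I would first recall from \eqref{eq:dvarphieta} that for any $\eta\in \mathrm{T}_b(M')$ and any $f\in \CO_{\overline\varphi(b)}$, we have
\[
\langle d\varphi_b(\eta), f\rangle = \langle \eta, \varphi_b^*(f)\rangle.
\]
In other words, $d\varphi_b$ is essentially the pullback along $\varphi_b^*$: viewing $\eta$ as a $\mathbb{C}$-linear functional on $\CO'_b$, we have $d\varphi_b(\eta) = \eta\circ \varphi_b^*$.

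Next I would argue injectivity of $d\varphi_b$ by a direct dualization. Suppose $\eta\in \mathrm{T}_b(M')$ satisfies $d\varphi_b(\eta)=0$. Then $\eta\circ \varphi_b^* = 0$ as a linear functional on $\CO_{\overline\varphi(b)}$, so $\langle \eta, g\rangle = 0$ for every $g$ in the image of $\varphi_b^*$. By the surjectivity hypothesis this image is all of $\CO'_b$, so $\eta = 0$ on $\CO'_b$, and in particular $\eta$ is the zero tangent vector. Hence $\ker(d\varphi_b)=0$, as required.

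There is no real obstacle to overcome here: the statement is a direct consequence of the fact that a surjection of $\mathbb{C}$-algebras dualizes to an injection on the spaces of $\mathbb{C}$-valued linear functionals, together with the observation that tangent vectors are particular linear functionals on the stalk and that the differential is the restriction of the transpose of $\varphi_b^*$ (compare \eqref{eq:diffmapasres}). The whole proof is really just the sentence ``$d\varphi_b$ is the transpose of $\varphi_b^*$, and the transpose of a surjection is injective,'' which matches the terse proof indicated in the paper.
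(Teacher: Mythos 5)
Your proposal is correct and is exactly the argument the paper leaves implicit: $d\varphi_b(\eta)=\eta\circ\varphi_b^*$, so surjectivity of $\varphi_b^*$ forces any $\eta$ with $d\varphi_b(\eta)=0$ to vanish on all of $\CO'_b$, hence $d\varphi_b$ is injective. This matches the paper's one-line proof, which simply declares the claim clear from the definition \eqref{eq:dvarphieta}.
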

\vspace{3mm}
\noindent\textbf{Proof of Theorem \ref{thm:charclosedsub} (b):}
Assume first that the homomorphism 
  \[\varphi^*:\ \CO(M)\rightarrow\CO'(M')\] 
  is subjective. This implies that 
\begin{itemize}
 \item[(a)]
    the map (see \cite[(5.5) and Theorem 5.7]{CSW1})
\[
\overline\varphi:\ M'=\mathrm{Specf}(\CO'(M'))
\rightarrow M=\mathrm{Specf}(\CO(M))
\] is injective; 
\item[(b)]  for every $b\in M'$, the homomorphism 
  \[\varphi^*_b:\ \CO_{\overline\varphi(b)}\rightarrow \CO'_{b}\]
is surjective by \cite[Corollary 2.12]{CSW1}; and 
\item[(c)] the homomorphism 
\[\underline{\CO}(M)\rightarrow \underline{\CO'}(M')
\] is  surjective by \cite[Proposition 2.14]{CSW1} and \eqref{eq:underlinecommdiag}.
\end{itemize} 
The facts (a),(b), and Lemma \ref{eq:surofstalk} imply  that $\varphi:M'\rightarrow M$ 
is an injective immersion.
Then $\underline \varphi: \underline {M'}\rightarrow \underline{M}$ is an injective immersion as well by Lemma \ref{lem:infinj}. This, along with the fact (c) and \cite[Chapter 1, Exercise 11]{War}, implies that $\overline\varphi$ is a topological embedding and $\overline\varphi(M')$ is closed in 
$M$. Therefore, $\varphi$ is a closed embedded immersion. 

Conversely, assume that $\varphi:M'\rightarrow M$ 
is a closed embedded immersion. 
For every $b\in M'$, set 
\[
n'_b:=\dim_b M',\ k'_b:=\deg_b M',\ 
n_b:=\dim_{\overline\varphi(b)} M,\ \text{and}\ 
k_b:=\deg_{\overline\varphi(b)} M.
\]
Since the map $\underline{\varphi}:\underline{M'}\rightarrow \underline M$ is a closed embedded immersion, it is easy to see that there exists  an open neighborhood $U_b$ of $\overline\varphi(b)$ in $M$ and an open cube $W_b$ in $\R^{n_b}$ such that 
\[(U_b,\CO|_{U_b})\cong W_b^{(k_b)},\quad  (\overline\varphi^{-1}(U_b),\CO'|_{\overline\varphi^{-1}(U_b)})\cong(W'_b)^{(k'_b)},\]
and the reduction of the morphism 
$(W'_b)^{(k'_b)}\rightarrow W_b^{(k_b)}$ induced by $\varphi|_{\overline\varphi^{-1}(W_b)}$ is the natural inclusion, where
\[W'_b:=(\BR^{n'_b}\times \{0\}^{n_b-n'_b})\cap W_b.
\] Then the map 
\[\varphi^*_{U_b}: \CO(U_b)\rightarrow \CO'(\overline\varphi^{-1}(U_b))\] is surjective by Lemma \ref{lem:homsurkey}. Set $U_0:=M\backslash \overline{\varphi}(M')$. Using the  open cover $\{U_b\}_{b\in M'\bigsqcup\{0\}}$ of $M$, 
 the map $\varphi^*: \CO(M)\rightarrow \CO'(M')$ is surjective by  Lemma \ref{lem:sur}. This completes the proof.
\qed
\vspace{3mm}

By applying Theorem \ref{thm:charclosedsub} (b), we have the following result, which implies Theorem \ref{thm:charclosedsub} (a).

\begin{prpd}\label{prpd:immersion} Let  $\varphi=(\overline{\varphi},\varphi^*): M'\rightarrow M$ be a morphism of formal manifolds, and let $b\in M'$. Then the following conditions are equivalent to each other.
\begin{itemize}
    \item[(a)] The morphism $\varphi$ is an immersion at $b$.
    \item[(b)] The  homomorphism $\varphi^*_b: \CO_{\overline\varphi(b)}\rightarrow \CO'_{b}$ is surjective.
    \item[(c)] The  homomorphism $\varphi^*_b: \wh{\CO}_{\overline\varphi(b)}\rightarrow \wh{\CO'}_{b}$ is surjective.
\end{itemize}
\end{prpd}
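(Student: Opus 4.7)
My plan is to establish the four implications (b)$\Rightarrow$(a), (c)$\Rightarrow$(a), (a)$\Rightarrow$(c), and (a)$\Rightarrow$(b), which together yield the desired equivalence. The first is precisely Lemma \ref{eq:surofstalk}. For (c)$\Rightarrow$(a), any surjective local $\BC$-algebra homomorphism $\varphi^*_b\colon \wh{\CO}_{\overline{\varphi}(b)}\to\wh{\CO'}_b$ carries $\wh{\m}_{\overline{\varphi}(b)}$ onto $\wh{\m'}_b$ and therefore induces a surjection on the cotangent quotients $\wh{\m}_{\overline{\varphi}(b)}/\wh{\m}_{\overline{\varphi}(b)}^2 \to \wh{\m'}_b/(\wh{\m'}_b)^2$. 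By Remark \ref{rem:dvarphivsvarphi2} the transpose of this map is $d\varphi_b$, so $d\varphi_b$ must be injective.

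The implication (a)$\Rightarrow$(c) will be obtained by applying Theorem \ref{thm:charclosedsub}(b) to the induced morphism $\varphi_b\colon M'_b \to M_{\overline{\varphi}(b)}$ between the single-point formal manifolds at $b$ and $\overline{\varphi}(b)$. Because the underlying topological space of $M'_b$ is a singleton, the continuous map $\overline{\varphi_b}$ is automatically injective, closed, and a topological embedding. Under the identifications from Lemma \ref{lem:idenTaM}, the differential of $\varphi_b$ at its unique point coincides with $d\varphi_b$ and is injective by assumption, so $\varphi_b$ is a closed embedded immersion. Theorem \ref{thm:charclosedsub}(b) then yields the surjectivity of $\varphi_b^*\colon \CO(M_{\overline{\varphi}(b)})=\wh{\CO}_{\overline{\varphi}(b)} \to \CO(M'_b)=\wh{\CO'}_b$, which is condition (c).

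The substantive step is (a)$\Rightarrow$(b). First I would observe that, in view of the Jacobian description in Example \ref{ex:Jacobian} and the lower semicontinuity of matrix rank, the injectivity of $d\varphi_b$ extends to all points of some open neighborhood of $b$; so $\varphi$ is an immersion on this neighborhood. Second, I would apply the slice chart theorem for smooth immersions (e.g.\ \cite[Theorem 4.25]{L}) to $\underline{\varphi}$ near $b$ and lift the resulting smooth coordinates to charts on the formal manifolds $M'$ and $M$. After shrinking, this produces a chart $(W',(N')^{(k')},\vartheta')$ of $M'$ containing $b$ and a chart $(W,N^{(k)},\vartheta)$ of $M$ containing $\overline{\varphi}(b)$ with $\overline{\varphi}(W')\subset W$, where $N\subset \R^n$ is an open cube, $N'=N\cap(\R^{n'}\times\{0\}^{n-n'})$, and the reduction of $\vartheta\circ\varphi|_{W'}\circ(\vartheta')^{-1}$ is the natural inclusion. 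Then $\varphi|_{W'}\colon W'\to W$ is a closed embedded immersion, and Theorem \ref{thm:charclosedsub}(b) (equivalently, Lemma \ref{lem:homsurkey}) yields the surjectivity of $\varphi^*_{W,W'}\colon\CO(W)\to\CO'(W')$. To pass to stalk-level surjectivity, I would note that for every open $V\subset W$ containing $\overline{\varphi}(b)$, the restriction of $\varphi|_{W'}$ to $V':=\overline{\varphi}^{-1}(V)\cap W'$ is still a closed embedded immersion onto $V$, hence $\varphi^*_{V,V'}$ is surjective by the same argument; taking the direct limit over shrinking $V$ yields the surjectivity of $\varphi^*_b\colon\CO_{\overline{\varphi}(b)}\to\CO'_b$.

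The main obstacle is the chart-lifting step in (a)$\Rightarrow$(b): the smooth slice chart on $\underline{M}$ must be realized by a genuine chart of the formal manifold $M$, and a compatible chart on $M'$ must simultaneously identify $W'$ with the slice $(N')^{(k')}$ so that the reduction of the restricted morphism is exactly the natural inclusion $N'\hookrightarrow N$. This is essentially a bookkeeping exercise once one knows that any chart on the reduction of a formal manifold lifts to a formal chart (and can be composed with diffeomorphisms of the reduction), but it has to be carried out carefully to produce a setting in which Theorem \ref{thm:charclosedsub}(b) applies directly.
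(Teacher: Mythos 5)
Your proposal is correct, and its substantive step, (a)$\Rightarrow$(b), is essentially the paper's argument: pass to a slice chart for the smooth immersion $\underline{\varphi}$, observe that the restricted morphism becomes a closed embedded immersion of formal manifolds, invoke Theorem \ref{thm:charclosedsub}(b) for surjectivity on sections, and descend to stalks (the paper quotes \cite[Corollary 2.12]{CSW1} for the last step where you shrink neighborhoods by hand; both work, though your shrinking argument should note that one must choose $V$ so that $\overline{\varphi}^{-1}(V)\cap W'$ lands inside the given domain of the germ, which follows from closedness of $\overline{\varphi}(W'\setminus V'')$ in $W$). Where you differ is in the organization of the remaining implications: the paper runs the economical cycle (a)$\Rightarrow$(b)$\Rightarrow$(c)$\Rightarrow$(a), getting (b)$\Rightarrow$(c) for free from the surjectivity of the completion map $\CO'_b\rightarrow \wh{\CO'}_b$, and (c)$\Rightarrow$(a) by the injectivity of the transpose ${}^t\varphi_b^*$ on $\mathrm{Dist}_b(M')$ together with \eqref{eq:diffmapasres}. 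You instead prove four implications, replacing the free (b)$\Rightarrow$(c) with a direct (a)$\Rightarrow$(c) via the point formal manifolds $\varphi_b\colon M'_b\rightarrow M_{\overline{\varphi}(b)}$ (legitimate -- Lemma \ref{lem:infinj} gives that $\varphi_b$ is an immersion, and Theorem \ref{thm:charclosedsub}(b) applies since the underlying map of singletons is trivially a closed embedding), and your (c)$\Rightarrow$(a) is the cotangent-space dual of the paper's distribution argument via Remark \ref{rem:dvarphivsvarphi2}. The net effect is the same; the paper's cycle is slightly leaner, while your version makes the role of the one-point formal manifolds more explicit.
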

\begin{proof} 
``(a) $\Rightarrow$ (b)".
If  $d\varphi_b$ is injective, then so is $d\underline{\varphi}_b$ (see Lemma \ref{lem:infinj}). By \cite[Proposition 1.35]{War} there is 
 an open neighborhood $U'$ of $b$ in $M'$, an open neighborhood $U$ of $\overline{\varphi}(b)$ in $M$ with $\overline{\varphi}(U')\subset U$, an open cube $W$ in $\BR^{n_b}$ ($n_b:=\dim_{\overline{\varphi}(a)}M$), and a slice $W'$ of $W$ such that
\begin{itemize}
    \item $U\cong W$, $U'\cong W'$ as smooth manifolds, and 
    \item the morphism $W'\rightarrow W$
induced by $\overline{\varphi}|_{U'}$ is the natural inclusion.
\end{itemize}
Then the map \[\varphi|_{U'}: \ (U',\CO'|_{U'})\rightarrow (U,\CO|_{U})\] is a closed embedded immersion of formal manifolds.
This, together with Theorem \ref{thm:charclosedsub} (b), implies that the homomorphism  \[\CO(U)\rightarrow \CO'(U')\] is subjective. Then the homomorphism  \[\varphi^*_b: \CO_{\overline{\varphi}(b)}\rightarrow \CO'_b\] is surjective
by \cite[Corollary 2.12]{CSW1}.

``(b) $\Rightarrow$ (c)". This is implied by  the fact that the canonical map $\CO'_b\rightarrow \widehat{\CO'}_b$ is surjective (see \cite[Proposition 2.6]{CSW1}). 

``(c) $\Rightarrow$ (a)".
If the homomorphism $\varphi^*_b: \wh{\CO}_{\overline\varphi(b)}\rightarrow \wh{\CO'}_{b}$ is surjective, then its continuous transpose (see \eqref{eq:tvarphia})
\[
{}^t\varphi_b^*:\ \mathrm{Dist}_b(M')\rightarrow 
\mathrm{Dist}_{\overline\varphi(b)}(M)
\]
is injective. 
This, together with \eqref{eq:diffmapasres}, implies that  
$d\varphi_b$ is injective.
\end{proof}

\subsection{Immersed formal submanifolds}

In the category of smooth manifolds, a monomorphism is nothing but an injective smooth map. In the case of formal manifolds, an injective morphism may not be a monomorphism.  However, we still have the following result by applying Proposition \ref{prpd:immersion}.

\begin{prpd}\label{prop:sub=mono}
Let  \[\varphi=(\overline{\varphi},\varphi^*):\ (M',\CO')\rightarrow (M,\CO)\] be an injective immersion of formal manifolds. Then it is a monomorphism in the category of formal manifolds. 
\end{prpd}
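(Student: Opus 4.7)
The plan is to reduce the monomorphism property to a pointwise statement about stalk maps, where we can invoke the surjectivity provided by Proposition \ref{prpd:immersion}. Concretely, suppose $\psi_1=(\overline{\psi_1},\psi_1^*)$ and $\psi_2=(\overline{\psi_2},\psi_2^*)$ are two morphisms from some formal manifold $(L,\CO_L)$ to $(M',\CO')$ satisfying $\varphi\circ\psi_1=\varphi\circ\psi_2$. The goal is to show $\psi_1=\psi_2$.

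First I would settle the topological part. Applying the forgetful functor to underlying continuous maps gives $\overline\varphi\circ\overline{\psi_1}=\overline\varphi\circ\overline{\psi_2}$, and the injectivity hypothesis on $\overline\varphi$ forces $\overline{\psi_1}=\overline{\psi_2}$, which I will denote by $\overline\psi$. This reduces the problem to showing that the two sheaf homomorphisms $\psi_1^*,\psi_2^*:\overline\psi^{-1}\CO'\to\CO_L$ coincide.

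Next I would work stalk by stalk. Fix $c\in L$ and set $b:=\overline\psi(c)$ and $a:=\overline\varphi(b)$. By the compatibility of stalk maps under composition of morphisms of locally ringed spaces, the hypothesis $\varphi\circ\psi_1=\varphi\circ\psi_2$ yields the equality
\[
\psi_{1,c}^*\circ\varphi_b^*=(\varphi\circ\psi_1)_c^*=(\varphi\circ\psi_2)_c^*=\psi_{2,c}^*\circ\varphi_b^*
\]
of $\C$-algebra homomorphisms $\CO_a\to(\CO_L)_c$. Since $\varphi$ is an immersion at $b$, Proposition \ref{prpd:immersion} tells us that $\varphi_b^*:\CO_a\to\CO'_b$ is surjective, so we may cancel it on the right to obtain $\psi_{1,c}^*=\psi_{2,c}^*$ as homomorphisms $\CO'_b\to(\CO_L)_c$. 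Since this holds at every $c\in L$ and a sheaf homomorphism is determined by its induced maps on stalks, we conclude $\psi_1^*=\psi_2^*$, hence $\psi_1=\psi_2$.

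There is essentially no serious obstacle here: once Proposition \ref{prpd:immersion} is in hand, the argument is a formal cancellation. The only routine verification is the chain rule $(\varphi\circ\psi_i)_c^*=\psi_{i,c}^*\circ\varphi_b^*$, which is immediate from the definition of composition of morphisms of locally ringed spaces, together with the standard fact that two morphisms of sheaves that agree on every stalk must coincide.
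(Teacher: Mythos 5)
Your proposal is correct and follows essentially the same route as the paper's proof: reduce to the equality of stalk maps after using injectivity of $\overline\varphi$ to identify the underlying continuous maps, then cancel the surjection $\varphi_b^*$ furnished by Proposition \ref{prpd:immersion}. No issues.
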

\begin{proof}
 Let
\[\psi_1=(\overline{\psi_1},\psi_1^*),\,\psi_2=(\overline{\psi_2},\psi_2^*):\ (M_1,\CO_1)\rightarrow (M',\CO')\]
be  two morphisms of formal manifolds such that 
\[\varphi\circ\psi_1 =\varphi\circ \psi_2:\ M_1\rightarrow M' \rightarrow M,\] where 
 $(M_1,\CO_1)$ is another formal manifold.
We aim to prove that $\psi_1=\psi_2$.

Note that the injectivity of $\overline\varphi$ implies that
$\overline{\psi_1}=\overline{\psi_2}$. 
Then it suffices to prove that 
for every $a\in M_1$, 
\[
\psi_{1,a}^*=\psi_{2,a}^*:\
\CO'_b\rightarrow \CO_{1,a} \quad (b:=\overline{\psi_1}(a)=\overline{\psi_2}(a)).
\]
Since $\varphi$ is an immersion, it follows from 
Proposition \ref{prpd:immersion} that the homomorphism 
\[
\varphi_b^*:\ \CO_{\overline\varphi(b)}
\rightarrow \CO'_{b}
\]
is surjective. 
This, together with the fact that
\[ 
\psi_{1,a}^*\circ \varphi_b^* =
(\varphi\circ\psi_1 )^*_a=(\varphi\circ \psi_2)^*_a
= \psi_{2,a}^*\circ\varphi_b^*:\ 
\CO_{\overline\varphi(b)}
\rightarrow \CO'_{b}\rightarrow \CO_{1,a},
\]
implies that $\psi_{1,a}^*=\psi_{2,a}^*$ for every $a\in M_1$. The proposition then follows. 
\end{proof}




Recall the family $\mathrm{Imm}(M)$ defined in the Introduction. For two pairs  $(M_1,\varphi_1)$ and $(M_2,\varphi_2)$ in $\mathrm{Imm}(M)$, 
we write \be \label{eq:preorder}(M_1,\varphi_1) \preccurlyeq (M_2,\varphi_2)\ee if there is a morphism $\psi:M_1\rightarrow M_2$ such that $\varphi_2\circ \psi=\varphi_1$. Note that ``$\preccurlyeq$" is a preorder on $\mathrm{Imm}(M)$. 
Proposition \ref{prop:sub=mono} implies that such a morphism $\psi$ is unique (if it exists). Then there is an equivalence relation on $\mathrm{Imm}(M)$ induced by the preorder ``$\preccurlyeq$",  which coincides with that defined in the Introduction.

For each immersed formal submanifold $(S,(i^{-1}\CO)/\CI)$ of $(M,\CO)$, recall the morphism  \[\iota=(\overline\iota,\iota^*):\ (S,(i^{-1}\CO)/\CI)\rightarrow (M,\CO)\] defined in \eqref{eq:mapiota}.

\begin{prpd}\label{prop:dessub} The morphism \eqref{eq:mapiota} is an injective immersion. Conversely, 
each equivalence class of $\mathrm{Imm}(M)$ has a unique representative of the form $(S,\iota)$, where $S$ is an immersed formal submanifold of $M$ and $\iota$ is as in \eqref{eq:mapiota}.
\end{prpd}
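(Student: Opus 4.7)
\textbf{Proof plan for Proposition \ref{prop:dessub}.}
For the first assertion, I would read both properties directly off the construction of $\iota$. The map $\overline{\iota}=i$ is the set-theoretic inclusion $S \hookrightarrow M$, which is injective. To see that $\iota$ is an immersion, I invoke the equivalence (a)$\Leftrightarrow$(b) in Proposition \ref{prpd:immersion}: for every $a\in S$, the stalk homomorphism
\[
\iota_a^*:\ \CO_{i(a)} = (i^{-1}\CO)_a \longrightarrow \bigl((i^{-1}\CO)/\CI\bigr)_a = (i^{-1}\CO)_a/\CI_a
\]
is the canonical quotient map, hence surjective.

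For existence in the second assertion, given $(M_0,\varphi_0)\in \mathrm{Imm}(M)$, set $f := \overline{\varphi_0}$ and $S := f(M_0)$, and equip $S$ with the (in general finer than subspace) topology that makes $f : M_0 \rightarrow S$ a homeomorphism; the inclusion $i : S \rightarrow M$ is then continuous because $i \circ f = f$ is. Using $f$ to identify sheaves on $M_0$ with sheaves on $S$, the map $\varphi_0^* : \overline{\varphi_0}^{-1}\CO = i^{-1}\CO \rightarrow \CO_0$ becomes a surjective $\BC$-algebra sheaf homomorphism by Theorem \ref{thm:charclosedsub}(a) applied to the immersion $\varphi_0$. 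Let $\CI$ denote its kernel. The induced isomorphism $(i^{-1}\CO)/\CI \xrightarrow{\sim} \CO_0$ exhibits $(S,(i^{-1}\CO)/\CI)$ as an immersed formal submanifold of $M$ and promotes $f$ to an isomorphism $(M_0,\CO_0) \rightarrow (S,(i^{-1}\CO)/\CI)$ of formal manifolds over $(M,\CO)$. Thus $(M_0,\varphi_0)$ is equivalent in $\mathrm{Imm}(M)$ to the canonical pair $(S,\iota)$.

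For uniqueness, suppose $(S_1,(i_1^{-1}\CO)/\CI_1)$ and $(S_2,(i_2^{-1}\CO)/\CI_2)$ are immersed formal submanifolds whose canonical pairs $(S_j,\iota_j)$ are equivalent via an isomorphism $\psi : S_1 \rightarrow S_2$ with $\iota_2 \circ \psi = \iota_1$. Since $\overline{\iota_j}$ is the inclusion of $S_j$ into $M$, the equation $i_2 \circ \overline{\psi} = i_1$ forces $\overline{\psi}$ to act as the identity on the common underlying set, so $S_1 = S_2$ as sets; as $\overline{\psi}$ is a homeomorphism, the topologies coincide as well. The sheaf component $\psi^*$ on $S_1 = S_2$ then satisfies $\psi^* \circ \iota_2^* = \iota_1^*$, and since each $\iota_j^*$ is the quotient map by $\CI_j$, well-definedness of $\psi^*$ on $(i^{-1}\CO)/\CI_2$ forces $\CI_2 \subset \CI_1$; the same reasoning applied to $\psi^{-1}$ yields the reverse inclusion. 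The main bookkeeping obstacle I expect lies in the existence step: correctly translating $\varphi_0^*$ through the homeomorphism $f$ and verifying that $(S,(i^{-1}\CO)/\CI)$ genuinely satisfies Definition \ref{def:formalmanifold} rather than merely being an abstract locally ringed space.
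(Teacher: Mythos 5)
Your proposal is correct and follows essentially the route the paper intends: its proof of this proposition is the one-line remark ``This is easy by Theorem \ref{thm:charclosedsub} (a)'', and your argument is precisely the natural expansion of that remark --- reading injectivity and surjectivity of the stalk/sheaf maps off the quotient construction of $\iota$, and for the converse transporting $\CO_0$ along the homeomorphism $\overline{\varphi_0}:M_0\to S$ and taking $\CI$ to be the kernel of the surjection $i^{-1}\CO\to f_*\CO_0$ guaranteed by Theorem \ref{thm:charclosedsub}(a). The uniqueness step via $\CI_1=\CI_2$ is also the standard argument and is sound.
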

\begin{proof} This is easy by Theorem \ref{thm:charclosedsub} (a). 
\end{proof}

\begin{dfn}
An embedded formal submanifold $S$ of $M$ is called a closed formal submanifold if $S$ is closed in $M$.
\end{dfn}
In the category of smooth manifolds, we also define closed smooth submanifolds of a smooth manifold similarly.

By applying Theorem \ref{thm:charclosedsub} (b), we have the following result. 
\begin{prpd}
There is a canonical bijection between 
\begin{eqnarray}\label{eq:closed}
  \{\text{closed formal submanifold of $M$}\}
\end{eqnarray}  and 
\begin{equation}\label{eq:ideal}
    \{\text{ideal $I$ of $\CO(M)$}\mid \text{$\CO(M)/I$ is a formal $\BC$-algebra}\}.
\end{equation}
Here $\CO(M)/I$ is equipped with the quotient topology.
\end{prpd}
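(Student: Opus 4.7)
The plan is to construct mutually inverse maps between the set \eqref{eq:closed} and the set \eqref{eq:ideal} by combining Theorem \ref{thm:charclosedsub}(b) with the quasi-inverse functors \eqref{eq:functor1} and \eqref{eq:functor2}.

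The forward map will send a closed formal submanifold $S$ of $M$ to the ideal $I_S := \ker \iota_S^*$, where $\iota_S^{\,*} : \CO(M) \to \CO_S(S)$ is the $\BC$-algebra homomorphism induced by the canonical morphism $\iota_S : S \to M$ of \eqref{eq:mapiota}. Since $\iota_S$ is a closed embedded immersion, Theorem \ref{thm:charclosedsub}(b) gives that $\iota_S^{\,*}$ is surjective, which yields an algebra isomorphism $\CO(M)/I_S \cong \CO_S(S)$. The nontrivial step here is to show that this is in fact a topological isomorphism when the left-hand side carries the quotient topology, which amounts to verifying that $\iota_S^{\,*}$ is a topological quotient map. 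For this I would use the local chart model from the proof of Theorem \ref{thm:charclosedsub}(b): locally $\iota_S$ is the slice embedding $\zeta_{\bm r}$, and formula \eqref{eq:zetafstandard} exhibits $\zeta_{\bm r}^{\,*}$ as a coordinate projection admitting an obvious continuous linear section; a partition-of-unity patching as in Lemma \ref{lem:sur} then produces a continuous linear section of $\iota_S^{\,*}$, so $\iota_S^{\,*}$ is open, hence a topological quotient. Consequently $I_S$ lies in \eqref{eq:ideal}.

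The backward map will send an ideal $I$ in \eqref{eq:ideal} to a closed formal submanifold as follows. Apply the functor \eqref{eq:functor2} to the continuous quotient $\pi : \CO(M) \to \CO(M)/I$, obtaining a morphism $\varphi_I : \mathrm{Specf}(\CO(M)/I) \to \mathrm{Specf}(\CO(M)) = M$ with $\varphi_I^{\,*} = \pi$ surjective. Theorem \ref{thm:charclosedsub}(b) then says $\varphi_I$ is a closed embedded immersion, so its image $S_I := \overline{\varphi_I}(\mathrm{Specf}(\CO(M)/I))$, endowed with the quotient sheaf $(i^{-1}\CO)/\CI$ for $\CI$ the sheaf of ideals generated by $i^{-1}I$, is a closed formal submanifold of $M$ canonically isomorphic to $\mathrm{Specf}(\CO(M)/I)$.

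Finally I would check that the two constructions are mutually inverse, which will follow from the quasi-inverse property of \eqref{eq:functor1} and \eqref{eq:functor2}. In one direction, $\iota_{S_I}^{\,*} = \pi$ by construction, so $I_{S_I} = \ker \pi = I$. In the other direction, the morphism $\varphi_{I_S}$ corresponds under the functors to the surjection $\CO(M) \to \CO(M)/I_S$, which is identified with $\iota_S^{\,*} : \CO(M) \to \CO_S(S)$ precisely by the topological isomorphism $\CO(M)/I_S \cong \CO_S(S)$ established above; hence $\varphi_{I_S}$ is canonically isomorphic to $\iota_S$, giving $S_{I_S} = S$ as closed formal submanifolds of $M$. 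The main obstacle throughout is the single topological compatibility step in the forward direction; once that is settled, the rest is a formal consequence of the equivalence of categories recorded in \cite[Theorem 5.11]{CSW1} and of Theorem \ref{thm:charclosedsub}(b).
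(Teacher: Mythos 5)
Your construction matches the paper's: the forward map is $S\mapsto\ker\iota^*$, and the backward map applies the functor \eqref{eq:functor2} to the quotient $\CO(M)\rightarrow\CO(M)/I$ and invokes Theorem \ref{thm:charclosedsub}\,(b) together with Proposition \ref{prop:dessub} to land back in the set \eqref{eq:closed}, with the mutual-inverse check done exactly as you describe. The only divergence is the topological step: where you propose assembling a continuous linear section of $\iota^*$ from slice charts and a partition of unity (which would require a continuous linear version of Borel's lemma, strictly more than Lemma \ref{lem:zetarsur} provides), the paper simply quotes the open mapping statement \cite[Corollary 4.16]{CSW1} to conclude that the surjection $\iota^*$ is open, hence a topological quotient.
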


\begin{proof} Let $(S,(i^{-1}\CO)/\mathcal{I})$ be a closed formal manifold. 
   It follows from Theorem \ref{thm:charclosedsub} (b) and 
   \cite[Corollary 4.16]{CSW1} that the continuous homomorphism 
   \be\label{eq:iotaglobal} \iota^*:\ \CO(M)\rightarrow ((i^{-1}\CO)/\CI)(S)\ee 
   is open and surjective. 
   This 
   implies that $\ker \iota^*$ lies in \eqref{eq:ideal}. Then 
   \be \label{eq:MtoI}(S,(i^{-1}\CO)/\mathcal{I}) \mapsto \ker \iota^* \ee
   defines a map from \eqref{eq:closed} to \eqref{eq:ideal}. 
   
   Conversely,  let $I$ be an element in \eqref{eq:ideal}. Then there is a morphism 
  \[
  \mathrm{Specf}(\pi): \ (\mathrm{Specf}(\CO(M)/I),
  \CO_{\CO(M)/I})\rightarrow (\mathrm{Specf}(\CO(M)),\CO_{\CO(M)})=(M,\CO)
  \] of formal manifolds  induced by the quotient map $\pi: \CO(M)\rightarrow \CO(M)/I$ through the functor  \eqref{eq:functor2}. Note that the morphism $\mathrm{Specf}(\pi)$ is a closed embedded immersion by Theorem \ref{thm:charclosedsub} (b).
Proposition \ref{prop:dessub} implies that there is a unique closed formal submanifold $M_I$ such that $(M_I,\iota)$ is equivalent to $((\mathrm{Specf}(\CO(M)/I), \mathrm{Specf}(\pi))$. Note that the image of $M_I$ under the map \eqref{eq:MtoI} is $I$. Then 
   \be\label{eq:ItoM} I \mapsto M_{I} \ee 
   defines a map from  \eqref{eq:ideal}  to \eqref{eq:closed}. 
  
  It is straightforward to see that  \eqref{eq:MtoI} and \eqref{eq:ItoM} are inverse of each other, and then we complete the proof. 
\end{proof}

The following are some examples of closed formal submanifolds. 
\begin{exampled}\label{ex:subunderlineM}
The reduction $\underline{M}$ of $M$ is a closed formal submanifold of $M$ with defining ideal $\m_\CO$. 
\end{exampled} 
\begin{exampled}\label{exam:Ma}
For every $a\in M$, the morphism $M_a\rightarrow M$ produces a closed formal submanifold of $M$, which we still denote as $M_a$. 
By \cite[Proposition 2.6]{CSW1}, the defining ideal of $M_a$ is $\cap_{r\in\BN}\m^r_a$.
\end{exampled}
\begin{exampled}\label{exam:subN'}
    With the notation as in Definition \ref{def:defzetar},  the  $(n',r,k')$-slice 
    $((N')^{(k')},\zeta_{\bm r})$ of $N^{(k)}$ yields a closed formal submanifold of $N^{(k)}$, to be denoted by 
    \[
    (N',i^{-1}\CO_N^{(k)}/\CI_{k',r}).
    \]
\end{exampled}

\subsection{Proof of Theorem \ref{thm:universalformalsub}}\label{subsec:proofthmuniformalsub}
This subsection is aimed to prove Theorem  \ref{thm:universalformalsub}.

Let $\C_M$ denote the sheaf  of  locally constant $\BC$-valued functions on $M$, and write 
 $\Hom_{\BC_M}(\CO,\underline{\CO})$ for the space of all $\BC_M$-homomorphisms between $\CO$ and $\underline{\CO}$.
 Recall that an element $D$ in $\Hom_{\BC_M}(\CO,\underline{\CO})$ is defined to be a family 
\[
\{
  (D_U: \CO(U)\rightarrow \underline{\CO}(U))\}_{U\textrm{ is an open subset of }M}
\]
of linear maps that commute with the restriction maps.  Let $\mathrm{Diff}(\CO,\underline{\CO})$ denote the subspace of $\Hom_{\BC_M}(\CO,\underline{\CO})$ consisting of all differential operators between $\CO$-modules 
$\CO$ and $\underline{\CO}$. See \cite[Section 3.2]{CSW1} for details.

Let $(P,(i^{-1}\underline\CO)/\CI)$ be an immersed smooth submanifold of $\underline M$ as in Theorem  \ref{thm:universalformalsub}. 
Let $V$ be an open subset of $P$, and let $U$ be an open subset of $M$ such that $V\subset U$. In what follows, we will not distinguish the sheaf $(i^{-1}\CO)|_V$  and the pullback $(i|_{V})^{-1}(\CO|_U)$ of $\CO|_U$ through the inclusion $i|_{V}:V\rightarrow U$.
 For every $D\in \Hom_{\BC_U}(\CO|_U,\underline{\CO}|_U)$, write \[i^{-1}D\in \mathrm{Hom}_{\C_V}((i^{-1}\CO)|_V,(i^{-1}\underline{\CO})|_V\] for the $\BC_V$-homomorphism induced by $D$.

For every open subset $V$ of $P$,  
write $\wt{\CI}(V)$ for the subspace 
\be\label{eq:JV}
 \left\{ g\in (i^{-1}\CO)(V): \begin{array}{l}
   (i^{-1}D)_V(g)\in \CI(V) \text{ for every open subset $U$}\\\text{of $M$ containing $V$ and } D\in \mathrm{Diff}(\CO|_U,\underline\CO|_U) 
  \end{array}\right\}
\ee
of $(i^{-1}\CO)(V)$. 
Then the assignment \[\wt\CI: V \mapsto \wt\CI(V)\] forms a subpresheaf of $i^{-1}\CO$ over $P$.

\begin{lemd}\label{lem:Jideal}
    The subpresheaf $\wt\CI$ is  an ideal of  $i^{-1}\CO$ .
\end{lemd}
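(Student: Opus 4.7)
My plan is to reduce the lemma to the standard Leibniz-type behaviour of differential operators, combined with the fact that $\CI$ is already an ideal of $i^{-1}\underline\CO$. I will verify two points: (i) each section $\wt\CI(V)$ is closed under the multiplication action of $(i^{-1}\CO)(V)$, and (ii) $\wt\CI$ satisfies the sheaf axiom, so that it is genuinely an ideal subsheaf of $i^{-1}\CO$. Closure of $\wt\CI(V)$ under addition is immediate from the $\BC$-linearity of the maps $(i^{-1}D)_V$.

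The key input is the following standard fact about the space $\mathrm{Diff}(\CO|_U,\underline\CO|_U)$ recalled in \cite[Section 3.2]{CSW1}: for any open $U\subset M$ and any $h\in\CO(U)$, the assignment
\[
D\;\longmapsto\;D\circ m_h-m_{\underline h}\circ D
\]
sends $\mathrm{Diff}(\CO|_U,\underline\CO|_U)$ into itself (lowering order by one when $D$ has positive order), where $m_h$ denotes multiplication by $h$ on $\CO|_U$ and $m_{\underline h}$ denotes multiplication by $\underline h$ on $\underline\CO|_U$. This follows immediately from the local coordinate description of differential operators and the ordinary Leibniz rule.

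Granting this, the proof of (i) runs as follows. Take $g\in\wt\CI(V)$, $h\in(i^{-1}\CO)(V)$, an open $U\subset M$ containing $V$, and $D\in\mathrm{Diff}(\CO|_U,\underline\CO|_U)$. After possibly shrinking $U$, I will represent $h$ by some $\tilde h\in\CO(U)$; this is legitimate because $(i^{-1}\CO)(V)=\varinjlim_{U'\supset V}\CO(U')$. Setting $D':=D\circ m_{\tilde h}-m_{\underline{\tilde h}}\circ D\in\mathrm{Diff}(\CO|_U,\underline\CO|_U)$ and applying the identity $D\circ m_{\tilde h}=m_{\underline{\tilde h}}\circ D+D'$ to a representative of $g$, one obtains on $V$
\[
(i^{-1}D)_V(hg)\;=\;\underline h\cdot (i^{-1}D)_V(g)\;+\;(i^{-1}D')_V(g).
\]
The first summand lies in $\CI(V)$ because $\CI$ is an ideal of $i^{-1}\underline\CO$ and $(i^{-1}D)_V(g)\in\CI(V)$ by hypothesis; the second lies in $\CI(V)$ because $D'\in\mathrm{Diff}(\CO|_U,\underline\CO|_U)$ and $g\in\wt\CI(V)$. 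Hence $hg\in\wt\CI(V)$.

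For (ii), suppose $\{V_\alpha\}$ is an open cover of $V$ and $g\in(i^{-1}\CO)(V)$ restricts to an element of $\wt\CI(V_\alpha)$ for each $\alpha$. For any $U\supset V$ open and $D\in\mathrm{Diff}(\CO|_U,\underline\CO|_U)$, the identity $(i^{-1}D)_V(g)|_{V_\alpha}=(i^{-1}D)_{V_\alpha}(g|_{V_\alpha})\in\CI(V_\alpha)$, together with the sheaf property of $\CI$, gives $(i^{-1}D)_V(g)\in\CI(V)$. The only minor bookkeeping I anticipate is the clean handling of representatives through the above filtered colimit; I expect no genuine obstacle beyond this.
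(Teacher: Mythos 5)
Your argument is correct and follows essentially the same route as the paper: localize so that a section of $i^{-1}\CO$ is represented by an actual formal function, use the stability of $\mathrm{Diff}(\CO|_U,\underline\CO|_U)$ under composition with multiplication operators, and invoke the sheaf property of $\CI$ to glue. Two small remarks. First, your identification $(i^{-1}\CO)(V)=\varinjlim_{U'\supset V}\CO(U')$ is not literally correct: the inverse image sheaf is the sheafification of that presheaf, so a section $h$ need not be representable on any single open neighborhood of $V$ no matter how one shrinks $U$; one must pass to an open cover $\{V'_\gamma\}$ of $V$ and represent $h|_{V'_\gamma}$ separately, exactly as the paper does. This is harmless because your step (ii) (the sheaf axiom for $\wt\CI$, which the paper also verifies) is precisely what is needed to glue the local conclusions $(hg)|_{V'_\gamma}\in\wt\CI(V'_\gamma)$ back to $hg\in\wt\CI(V)$, but you should run the ideal-property argument on the cover rather than on $V$ itself. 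Second, your commutator $D\circ m_{\tilde h}-m_{\underline{\tilde h}}\circ D$ is a slight detour: the paper simply observes that $D\circ m_{\tilde h}$ is itself a differential operator (\cite[(3.5)]{CSW1}), which gives $(i^{-1}D)_V(hg)=(i^{-1}(D\circ m_{\tilde h}))_V(g)\in\CI(V)$ in one step, without needing that $\CI$ is an ideal of $i^{-1}\underline\CO$. Your version works too, since that ideal property is part of the definition of an immersed smooth submanifold, but the direct route is shorter.
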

\begin{proof} Let $V\subset P$ and $U\subset M$ be open subsets such that $V\subset U$. 
Take an open cover  $\{V_\gamma\}_{\gamma\in \Gamma}$ of $V$, and let \[\{g_{\gamma}\in \wt\CI(V_{\gamma})\}_{\gamma\in\Gamma}\] be a family of sections such that $g_{\gamma_1}|_{V_{\gamma_1}\cap V_{\gamma_2}}=g_{\gamma_2}|_{V_{\gamma_1}\cap V_{\gamma_2}}$ for all $\gamma_1,\gamma_2\in \Gamma$. Write  $g$ for the section 
 in $(i^{-1}\CO)(V)$ satisfying $g|_{V_{\gamma}}=g_{\gamma}$ for all $\gamma\in \Gamma$. 
Then for every $D\in\mathrm{Diff}(\CO|_U,\CO|_U)$ we have that 
\[((i^{-1}D)_V(g))|_{V_{\gamma}}=(i^{-1}D)_{V_{\gamma}}(g_{\gamma})\in \CI(V_{\gamma}).\]
 This implies that $(i^{-1}D)_V(g)\in \CI(V)$, and so $\wt\CI$ is a sheaf over $P$.

Let $h\in (i^{-1}\CO)(V)$, and  
take an open cover $\{V'_{\gamma}\}_{\gamma\in \Gamma'}$ of $V$ such that \[ h|_{V_{\gamma}'}\in (i_{p}\CO)(V'_{\gamma})=\varinjlim_{V'_\gamma\subset U'}\CO(U'),\] where $i_p\CO$ is the presheaf obtained by the pullback of $\CO$. 
Let $h|_{V'_\gamma}$ be represented by  a section $f_{\gamma}\in \CO(W_\gamma)$ on an open neighborhood  $W_\gamma$ of $V'_\gamma$ in $M$. 
Then for every   $g\in \wt\CI(V)$, we have that 
\[((i^{-1}D)_V(hg))|_{V'_{\gamma}}=(i^{-1}D_\gamma)_{V'_\gamma}(g|_{V'_\gamma})\in \CI(V'_{\gamma}),\]
where $D\in\mathrm{Diff}(\CO|_U,\CO|_U)$, and (see  \cite[(3.5)]{CSW1})
 \[D_\gamma:=D|_{W_{\gamma}\cap U}\circ f_{\gamma}|_{W_{\gamma}\cap U}\in \mathrm{Diff}(\CO|_{W_{\gamma}\cap U},\CO|_{W_{\gamma}\cap U}).\]
 This, together with the sheaf property of $\CI$, gives that $hg\in \wt\CI(V)$, as required. 

\end{proof}

 Let $n,n',k,r\in \BN$ with $n-n'=r$,  $N$  an open subset of $\BR^n$ containing $0$, and $N'$ an $n'$-slice of $N$ as in \eqref{eq:N'}.
\begin{lemd}\label{lem:prethmunisub} Assume that $M=N^{(k)}$ and $P=(N')^{(0)}$. Then $\wt\CI=\CI_{k+r,r}$ (see Example \ref{exam:subN'}). 
\end{lemd}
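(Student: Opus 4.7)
The plan is to unfold both $\widetilde{\mathcal{I}}$ and $\mathcal{I}_{k+r,r}$ into explicit conditions on the smooth coefficients $f_J$ of a local representative $g = \sum_{J \in \mathbb{N}^k} f_J\, y^J$ of a section of $i^{-1}\mathcal{O}$, and then show the two conditions coincide by choosing appropriate test differential operators.

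First I would record what $\mathcal{I}_{k+r,r}$ says. Specializing formula \eqref{eq:zetafstandard} to $(r_1, r_2, r_3) = (n', r, k)$, the pullback $\zeta_{\bm r}^*$ acts by formal Taylor expansion of each $f_J$ along $N'$ in the normal coordinates $x_{n'+1}, \dots, x_n$. Hence $g \in \mathcal{I}_{k+r,r}(V)$ if and only if, for every $J \in \mathbb{N}^k$ and every $I = (i_1, \dots, i_r) \in \mathbb{N}^r$, the derivative $\partial_{x_{n'+1}}^{i_1}\cdots \partial_{x_n}^{i_r}(f_J)$ vanishes on $V$. On the other hand, since $P = (N')^{(0)}$, the ideal $\mathcal{I}$ is simply the ideal of germs of smooth functions on $\underline{M} = N$ vanishing along $V$.

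For the inclusion $\mathcal{I}_{k+r,r} \subseteq \widetilde{\mathcal{I}}$, I would test against an arbitrary $D \in \mathrm{Diff}(\mathcal{O}|_U, \underline{\mathcal{O}}|_U)$. Using the local description of differential operators between these $\mathcal{O}$-modules developed in \cite{CSW1}, after shrinking $U$ such a $D$ can be written as a finite $\underline{\mathcal{O}}(U)$-linear combination of compositions $\pi \circ \partial_x^I \partial_y^J$, where $\pi: \mathcal{O} \to \underline{\mathcal{O}}$ denotes the natural quotient. A routine computation gives $\pi(\partial_x^I \partial_y^J g) = J!\,\partial_x^I(f_J)$, so it suffices to show that every $\partial_x^I(f_J)$ vanishes on $V$. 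This follows from the hypothesis together with the observation that the tangential fields $\partial_{x_1}, \dots, \partial_{x_{n'}}$ preserve the property of vanishing on $V$ (since $V \subset \{x_{n'+1} = \cdots = x_n = 0\}$): one first applies the normal part of $\partial_x^I$ (which vanishes on $V$ by the hypothesis) and then the tangential part.

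The reverse inclusion is easier and uses only concrete operators. For each $I \in \mathbb{N}^r$ and $J \in \mathbb{N}^k$, the single operator $D_{I, J} := \pi \circ \partial_{x_{n'+1}}^{i_1}\cdots\partial_{x_n}^{i_r}\partial_y^J$ lies in $\mathrm{Diff}(\mathcal{O}|_U, \underline{\mathcal{O}}|_U)$, and one computes $D_{I, J}(g) = J!\, \partial_{x_{n'+1}}^{i_1}\cdots\partial_{x_n}^{i_r}(f_J)$. If $g \in \widetilde{\mathcal{I}}(V)$, this expression must lie in $\mathcal{I}(V)$, i.e., vanish on $V$, which yields exactly the characterization of $\mathcal{I}_{k+r,r}(V)$ obtained above. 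The only delicate technical point in the whole argument is the local decomposition of $\mathrm{Diff}(\mathcal{O}|_U, \underline{\mathcal{O}}|_U)$ invoked in the first inclusion; this follows routinely from the foundational framework of \cite{CSW1}.
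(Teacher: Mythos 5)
Your proposal is correct and follows essentially the same route as the paper: the paper likewise combines the explicit local form of elements of $\mathrm{Diff}(\CO,\underline{\CO})$ as locally finite sums $\sum_{I,J} f_{I,J}\circ\partial_x^I\partial_y^J$ (citing \cite[Example 3.11]{CSW1}) with the formula \eqref{eq:zetafstandard} for $\zeta_{\bm r}^*$ from Example \ref{ex:closedsub}, and concludes by the fact that both $\wt\CI$ and $\CI_{k+r,r}$ are subsheaves of $i^{-1}\CO$. Your write-up just makes explicit the two computations (the reduction to normal derivatives of the coefficients $f_J$ vanishing on $V$, and the tangential-derivative observation) that the paper leaves implicit.
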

\begin{proof}
Let $V$ be an open subset of $N'$ and  $g\in (i_p\CO)(V)$, where $i_p\CO$ denotes the presheaf obtained by the pullback of $\CO$.
Recall from \cite[Example 3.11]{CSW1} that
every element in $\mathrm{Diff}(\CO,\underline{\CO})$ has the form 
\[
\sum_{I\in \BN^n, J\in \BN^k} f_{I,J}\circ \partial_x^I\partial_y^J,
\]
where $\{f_{I,J}\}_{I\in \BN^n, J\in \BN^k}$ is a locally finite family of smooth functions on $N$. This, together with Example \ref{ex:closedsub}, implies that the following three conditions are equivalent to each other.
\begin{itemize}
    \item [(a)] $g\in \wt I(V)$.
    \item [(b)] For each formal function $f \in \CO(W)$ defined on an open neighborhood $W$ of $V$ in $N$ which represents $g$, we have that  $X(f)\circ i=0$ for each $X\in \mathrm{Diff}(\CO(W),\underline{\CO}(W))$.
    \item [(c)] $g\in \CI_{k+r,r}(V)$.
\end{itemize}
The lemma then follows from the fact that $\wt\CI$ and $\CI_{k+r,r}$ are both subsheaves of $i^{-1}\CO$.

\end{proof}

\vspace{3mm}
\noindent\textbf{Proof of Theorem \ref{thm:universalformalsub}:}  
By Lemma \ref{lem:Jideal}, $(P,i^{-1}\CO/\wt{\CI})$ is a locally ringed space over $\Spec(\BC)$, to be denoted by $\wt{P}$. 
For every $a\in P$, pick a chart $(U_a,(N_a)^{(k_a)},\vartheta_a=(\overline{\vartheta_a},\vartheta_a^*))$ of $M$ containing $a$, and an open neighborhood $V_a$ of $a$ in $P$ such that $N_a':=\overline{\vartheta_a}(V_a)$ is a $n_a'$-slice in $N_a$, where $n_a':=\dim_a P$.
By Lemma \ref{lem:prethmunisub}, we have that $(V_a,(i^{-1}\CO/\wt{\CI})|_{V_a})$ is a formal manifold, and so $\wt{P}$ is an immersed formal submanifold of $M$. 

Let $\psi=(\overline{\psi},\psi^*): M' \rightarrow M$ be a morphism of formal manifolds. Assume that 
$\overline{\psi}(M')\subset P$ and  the induced map $\overline{\psi}: M'\rightarrow P$  is continuous.  
For every $a\in P$,  
write
 \[
 \psi_{U_a}=(\overline{\psi_{U_a}}, \psi_{U_a}^*):\ (\overline{\psi}^{-1}(U_a),\CO'|_{{\overline\psi}^{-1}(U_a)})\rightarrow (U_a, \CO|_{U_a})\]
for the morphism of formal manifolds induced by $\psi$. 
It follows from Lemmas \ref{lem:prethmunisub} and \ref{lem:constructphi} that there is a unique morphism 
\[
\phi_{U_a}=(\overline{\phi_{U_a}},\phi_{U_a}^*):\ 
(\overline{\psi}^{-1}(U_a),\CO'|_{{\overline\psi}^{-1}(U_a)})\rightarrow (V_a,(i^{-1}\CO/\wt\CI)|_{V_a})
\]
such  that the diagram
\[\xymatrix{ & (V_a,(i^{-1}\CO/\wt{
\CI
})|_{V_a}) \ar[d]^{\iota}\\
  \quad 
(\overline{\psi}^{-1}(U_a),\CO'|_{{\overline\psi}^{-1}(U_a)}) \ar[r]^{\psi_{U_a}}\ar[ur]^{\phi_{U_a}}& (U_a, \CO|_{U_a})}
  \]
   commutes.
The uniqueness allows us to glue $\{\phi_{U_a}\}_{a\in M}$ together, and we obtain in this way a morphism 
$\phi: M'\rightarrow \wt P$ satisfying $\iota\circ \phi=\psi: M'\rightarrow M$. By  Proposition \ref{prop:sub=mono}, such a morphism $\phi$ is unique. Thus $\wt P$ has the universal property as stated in the Theorem \ref{thm:universalformalsub}. On the other hand, the uniqueness of such an immersed formal submanifold $\wt P$ follows from Proposition \ref{prop:dessub}.  This finishes the proof. 
\qed

\subsection{Applications of Theorem \ref{thm:universalformalsub}} \label{subsection:app}
Here we give some by-products of  Theorem \ref{thm:universalformalsub}. Recall the notion of initial immersed formal (or smooth) submanifolds from the Introduction. In this subsection,  let $P$ be an initial immersed smooth submanifold of $\underline{M}$.
We have the following obvious result by Theorem \ref{thm:universalformalsub}. 
 \begin{cord}\label{cor:universalformalsub}
There is a unique initial immersed formal submanifold $\wt P$ of $M$ such that $\wt P=P$ as topological spaces.
\end{cord}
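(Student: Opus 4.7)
The plan is to derive this corollary as an immediate consequence of Theorem \ref{thm:universalformalsub}: apply that theorem to $P$, regarded simply as an immersed smooth submanifold of $\underline M$, to produce a unique immersed formal submanifold $\wt P$ of $M$ with $\wt P = P$ as topological spaces and satisfying the factorization property recorded there. The only real content is then to upgrade that factorization property to initiality, i.e.\ to drop the running hypothesis in Theorem \ref{thm:universalformalsub} that the induced map $\overline\psi:M'\to P$ is continuous (into the intrinsic topology of $P$).

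Concretely, I would argue as follows. Let $\psi=(\overline\psi,\psi^*):M'\to M$ be any morphism of formal manifolds with $\overline\psi(M')\subset P$. Taking reductions yields a smooth map $\underline\psi:\underline{M'}\to\underline M$ whose image is contained in $P$. Since $P$ is by hypothesis initial in $\underline M$, the set-theoretic factorization $\underline{M'}\to P$ is automatically smooth, hence continuous. But the underlying topological spaces of $M'$ and $\underline{M'}$ coincide, and $\overline\psi$ equals $\underline\psi$ as a map of sets, so $\overline\psi:M'\to P$ is continuous as a map into $P$. Theorem \ref{thm:universalformalsub} then supplies the unique morphism $\phi:M'\to\wt P$ with $\iota\circ\phi=\psi$, which is exactly initiality of $\wt P$.

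For uniqueness, I would use the standard categorical argument. If $\wt P_1$ and $\wt P_2$ are two initial immersed formal submanifolds of $M$ with underlying topological space $P$, then applying initiality of $\wt P_1$ to the inclusion morphism $\iota_2:\wt P_2\to M$ produces a unique $\phi_{21}:\wt P_2\to\wt P_1$ with $\iota_1\circ\phi_{21}=\iota_2$, and symmetrically a unique $\phi_{12}:\wt P_1\to\wt P_2$ with $\iota_2\circ\phi_{12}=\iota_1$. The uniqueness clauses in initiality, applied to $\psi=\iota_1$ and $\psi=\iota_2$, force $\phi_{21}\circ\phi_{12}=\mathrm{id}_{\wt P_1}$ and $\phi_{12}\circ\phi_{21}=\mathrm{id}_{\wt P_2}$, so $\wt P_1$ and $\wt P_2$ are equivalent in $\mathrm{Imm}(M)$. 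I do not expect a genuine obstacle: Theorem \ref{thm:universalformalsub} does all the heavy lifting, and the one new observation is that continuity of $\overline\psi:M'\to P$ comes for free from initiality of $P$ in $\underline M$ together with the fact that passing to reductions preserves the underlying topology.
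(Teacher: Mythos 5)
Your proposal is correct and follows the same route as the paper, which simply states the corollary as an immediate consequence of Theorem \ref{thm:universalformalsub}. You correctly identify and fill in the one point the paper leaves implicit — that for any morphism $\psi$ with $\overline\psi(M')\subset P$ the induced map $\overline\psi:M'\to P$ is automatically continuous, by passing to reductions and using initiality of $P$ in $\underline M$ — and your uniqueness argument (via equivalence in $\mathrm{Imm}(M)$, which yields equality of submanifolds by Proposition \ref{prop:dessub}) is consistent with the paper's treatment.
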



Note that the set \be \label{eq:submanifoldoverP}\{\text{immersed formal submanifold $P'$ of $M$}\mid \text{$P'=P$ as subsets of $M$}\}\ee admits a partial order induced by \eqref{eq:preorder}.
 Corollary \ref{cor:universalformalsub} implies the following result, which
 says that  
 $P$ is the minimal one
and $\wt P$ is the maximal one in the partially ordered set \eqref{eq:submanifoldoverP}.

\begin{cord} Let $P'$ be an immersed formal submanifold of $M$ such that $P'=P$ as subsets of $M$. Then there is a unique morphism $P\rightarrow P'$ as well as a unique morphism $P'\rightarrow \wt P$ of formal manifolds such that the diagram 
\[\xymatrix{ P\ar[r]\ar[rd]_{\iota}&P'\ar[r]\ar[d]^{\iota}&\wt P\ar[ld]^{\iota}\\
&M}
  \] 
commutes. 
\end{cord}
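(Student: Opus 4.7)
The plan is to construct the two morphisms independently and then deduce uniqueness from a monomorphism argument. For $P'\to \wt{P}$ I will appeal directly to the universal property of $\wt{P}$ provided by Theorem \ref{thm:universalformalsub}; for $P\to P'$ I will identify $P$ with the reduction $\underline{P'}$ and then use the canonical reduction morphism of a formal manifold.

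To construct $P'\to \wt{P}$, apply Theorem \ref{thm:universalformalsub} to $\psi:=\iota_{P'}\colon P'\to M$. Its set-theoretic image is $P$, so the only hypothesis that requires checking is that $\overline{\iota_{P'}}\colon P'\to P$ is continuous. The reduction $\underline{\iota_{P'}}\colon \underline{P'}\to \underline{M}$ is an injective immersion of smooth manifolds: its differential at each point is the restriction of $d\iota_{P'}$ to $\mathrm{T}_b(\underline{P'})$ via \eqref{eq:underlinecommdiag} and Lemma \ref{lem:taninj}, and its image is $P$. The initial property of $P$ in $\underline{M}$ then supplies a smooth, hence continuous, factorisation $\underline{P'}\to P$. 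Since $\underline{P'}$ and $P'$ have the same underlying topological space, this is exactly the continuity required, and Theorem \ref{thm:universalformalsub} produces the unique morphism $P'\to \wt{P}$ fitting into the diagram.

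For $P\to P'$, the smooth map $\underline{P'}\to P$ just constructed is the identity on points. I would upgrade it to an isomorphism of smooth manifolds by a local dimension comparison: combining the formal structure of $P'$ with the local immersion normal form of Proposition \ref{prop:localimmersion} forces $\dim_b \underline{P'}=\dim_b P$ at every $b$, so the smooth injective immersion $\underline{P'}\to P$ is a local and hence global diffeomorphism. Under the resulting identification $\underline{P'}\cong P$, the canonical reduction morphism $\underline{P'}\to P'$ (associated, as in Example \ref{ex:subunderlineM}, to the reduction $\underline{M}\to M$ restricted to the submanifold $P'$) furnishes the desired $P\to P'$, and commutativity with $\iota$ is automatic from the naturality of reduction.

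Uniqueness of both morphisms follows from Proposition \ref{prop:sub=mono}: the injective immersions $\iota\colon P'\to M$ and $\iota\colon \wt{P}\to M$ are monomorphisms in the category of formal manifolds, so any morphism into either is determined by its composition with $\iota$. The main obstacle I anticipate is the identification $\underline{P'}\cong P$ as smooth manifolds in the construction of $P\to P'$, since the initial property of $P$ supplies only one direction of smoothness of the identity map on underlying sets; extracting the reverse direction via the local normal form and the dimension count is the delicate step.
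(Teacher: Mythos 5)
Your construction of the morphism $P'\rightarrow \wt P$ and your uniqueness argument are both correct and are exactly what the paper intends: the continuity of the induced map $P'\rightarrow P$ comes from applying the initiality of $P$ in $\underline{M}$ to the injective immersion $\underline{\iota_{P'}}:\underline{P'}\rightarrow\underline{M}$, after which Theorem \ref{thm:universalformalsub} and Proposition \ref{prop:sub=mono} do the rest. (The paper gives no written proof of this corollary beyond asserting that it follows from Corollary \ref{cor:universalformalsub}, so for the second half there is nothing to compare against.)

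The construction of $P\rightarrow P'$, however, has a genuine gap at exactly the step you flag as delicate. Proposition \ref{prop:localimmersion} normalizes the immersion $\iota_{P'}$ locally, but it says nothing about how $\dim_b\underline{P'}$ compares with $\dim_b P$: these are two a priori unrelated immersed submanifold structures on the same point set, and the local slice produced by the normal form for $P'$ need not coincide, even locally as a set, with the one for $P$. Initiality of $P$ only yields the inequality $\dim_b\underline{P'}\le\dim_b P$ (via the smooth injective immersion $\underline{P'}\rightarrow P$), and the reverse inequality fails in the paper's setting, where manifolds are merely paracompact and may have different dimensions at different points. Concretely, take $M=P=\R$ and let $P'$ be the same point set retopologized as the immersed submanifold $(-\infty,0)\sqcup\{0\}\sqcup(0,\infty)$, with components of dimensions $1,0,1$; this is a legitimate immersed formal submanifold of $M$ equal to $P$ as a set, with $\dim_0\underline{P'}=0<1=\dim_0 P$. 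For this $P'$ no morphism $P\rightarrow P'$ over $M$ can exist at all, since its underlying map would have to be a continuous bijection from the connected space $P$ onto the disconnected space $P'$ fixing every point. So the identification $\underline{P'}\cong P$ cannot be extracted from the normal form and the dimension count; the existence of $P\rightarrow P'$ requires an additional hypothesis (for instance that $P'$ carries the same topology as $P$, or that $\underline{P'}=P$ as smooth manifolds) that neither your argument nor the paper supplies.
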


By Proposition \ref{prop:sub=mono},   the map
\begin{equation}\begin{split} \label{eq:morMatoMinj}
     \{\text{morphism from $M'$ to $\wt P$}\}&\rightarrow 
  \{\text{morphism from $M'$ to $M$}\},\\
  \phi&\mapsto 
 \iota\circ \phi 
\end{split}\end{equation}
is injective, where  $M'$ is another formal manifold. 
In view of Corollary \ref{cor:universalformalsub}, we describe the image of \eqref{eq:morMatoMinj} as follows.

\begin{cord} \label{cor:app3}
  Let 
  \[\psi=(\overline\psi,\psi^*):\ (M',\CO')\rightarrow (M,\CO)\] be  a morphism of formal manifolds. Then  $\psi$ lies in the image of \eqref{eq:morMatoMinj} if and only if 
   $\overline\psi(M')\subset P$.
 \end{cord}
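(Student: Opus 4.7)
The plan is to prove the ``only if'' direction directly from the definition of $\iota$, and to obtain the ``if'' direction by applying the universal property of $\widetilde P$ furnished by Theorem \ref{thm:universalformalsub}.

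The ``only if'' direction is immediate: if $\psi=\iota\circ\phi$ for some morphism $\phi:M'\to\widetilde P$, then on underlying topological spaces $\overline\psi=\overline\iota\circ\overline\phi$, and since $\overline\iota:\widetilde P\to M$ is the inclusion and $\widetilde P=P$ as subsets of $M$, one reads off $\overline\psi(M')\subset P$.

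For the ``if'' direction, suppose $\overline\psi(M')\subset P$. In order to invoke Theorem \ref{thm:universalformalsub}, I have to check that the induced set-theoretic map $\overline\psi:M'\to P$ is continuous. The underlying topological space of $M'$ equals that of its reduction $\underline{M'}$, and under this identification $\overline\psi:M'\to M$ coincides with the continuous (in fact smooth) map underlying $\underline\psi:\underline{M'}\to\underline{M}$ given by \eqref{eq:reductionvarphi}. Because $P$ is an initial immersed smooth submanifold of $\underline{M}$, the defining universal property for initial smooth submanifolds guarantees that the smooth map $\underline\psi$, whose image is contained in $P$, factors through a smooth (hence continuous) map $\underline{M'}\to P$. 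Consequently $\overline\psi:M'\to P$ is continuous, and Theorem \ref{thm:universalformalsub} produces a unique morphism $\phi:M'\to\widetilde P$ of formal manifolds with $\iota\circ\phi=\psi$, placing $\psi$ in the image of \eqref{eq:morMatoMinj}.

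The only delicate point is the verification of the continuity hypothesis in Theorem \ref{thm:universalformalsub}, which is not automatic: as an immersed smooth submanifold, $P$ may carry a topology strictly finer than the subspace topology induced from $\underline{M}$. The initiality of $P$ in $\underline{M}$ is precisely what is needed to lift continuity from $\underline{M}$ to $P$, and this is where the hypothesis on $P$ enters crucially.
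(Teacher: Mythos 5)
Your proof is correct and follows the same route the paper intends: the corollary is presented there as an immediate consequence of Theorem \ref{thm:universalformalsub} (via Corollary \ref{cor:universalformalsub}), and the only substantive step is exactly the one you isolate, namely that the initiality of $P$ in $\underline{M}$ upgrades the continuity of $\overline\psi:M'\to\underline{M}$ to continuity of the induced map $M'\to P$ in the (possibly finer) topology of $P$, so that the theorem's hypothesis is met. You have merely made explicit the detail the paper leaves unstated, and your identification of $\overline\psi$ with the map underlying $\underline\psi$ is the right way to do it.
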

   
Note that, if $P=\{a\}$ for some $a\in M$, then $\wt P=M_a$ (see Example \ref{exam:Ma}). 
As a special case of Corollary \ref{cor:app3}, we have the following result. 
\begin{cord}
For every morphism $\psi=(\overline\psi,\psi^*): M'\rightarrow M$ satisfying that $\overline{\psi}(b)=a$ for all $b\in M'$, it induces a morphism from $M'$ to $M_a$. 
   Conversely, every morphism from $M'$ to $M_a$ arises in this way. 
\end{cord}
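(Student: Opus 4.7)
The plan is to deduce this corollary as a direct specialization of Corollary \ref{cor:app3} to the case $P = \{a\}$. First I would observe that the singleton $\{a\}$, regarded as a zero-dimensional submanifold of $\underline{M}$, is an initial immersed smooth submanifold of $\underline{M}$: any continuous (equivalently, smooth) map from a smooth manifold whose image lies in $\{a\}$ factors uniquely through $\{a\}$. Thus Corollary \ref{cor:universalformalsub} applies, producing a unique initial immersed formal submanifold $\wt{\{a\}}$ of $M$ whose underlying space is $\{a\}$.

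Next I would identify $\wt{\{a\}}$ with $M_a$. By Example \ref{exam:Ma}, $M_a$ is the immersed (indeed closed) formal submanifold of $M$ with underlying space $\{a\}$ and defining ideal $\bigcap_{r \in \BN} \m_a^r$. To check that $M_a$ satisfies the universal property characterizing $\wt{\{a\}}$, one may invoke the uniqueness clause of Theorem \ref{thm:universalformalsub}, so it suffices to verify that for any morphism $\psi = (\overline{\psi},\psi^*): M' \to M$ with $\overline{\psi}(M') \subset \{a\}$, there is a unique factorization through $M_a \to M$. This follows because $\psi^*$ factors through $\CO_a$, whose stalk-level map into $\CO'_b$ is continuous in the canonical LCS structure, and hence descends to $\widehat{\CO}_a = \CO(M_a)$ by the Hausdorffification property recalled in \S\,\ref{subsec:tangentspace} (namely $\CO_a/\overline{\{0\}} = \widehat{\CO}_a$). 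The uniqueness of such a factorization comes from Proposition \ref{prop:sub=mono}, since the morphism $M_a \to M$ is an injective immersion.

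Having identified $\wt{\{a\}} = M_a$, I would apply Corollary \ref{cor:app3} with $P = \{a\}$: it asserts that a morphism $\psi: M' \to M$ lies in the image of
\[
\{\text{morphism from } M' \text{ to } M_a\} \to \{\text{morphism from } M' \text{ to } M\}, \quad \phi \mapsto \iota \circ \phi,
\]
if and only if $\overline{\psi}(M') \subset \{a\}$, i.e., $\overline{\psi}(b) = a$ for all $b \in M'$. This is exactly the forward implication of the corollary. Conversely, for any morphism $\phi: M' \to M_a$, the composition $\iota \circ \phi: M' \to M$ satisfies $\overline{\iota \circ \phi}(b) = \overline{\iota}(\overline{\phi}(b)) = a$ for all $b \in M'$, since $M_a$ is the singleton $\{a\}$ as a topological space. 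This gives the converse.

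The statement is essentially immediate from Corollary \ref{cor:app3}; the only nontrivial ingredient is the identification $\wt{\{a\}} = M_a$, which I expect to be the main (and rather mild) obstacle, resolved by combining Example \ref{exam:Ma}, the uniqueness in Theorem \ref{thm:universalformalsub}, and the monomorphism property from Proposition \ref{prop:sub=mono}.
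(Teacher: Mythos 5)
Your proposal is correct and follows essentially the same route as the paper, which likewise obtains the corollary by specializing Corollary \ref{cor:app3} to $P=\{a\}$ after noting the identification $\wt{\{a\}}=M_a$ from Example \ref{exam:Ma}. The only difference is that the paper leaves that identification unproved, whereas you verify it through the universal property; your verification is sound, though the descent to $\widehat{\CO}_a$ should be phrased as landing in the Hausdorff spaces $\CO'(U')$ (or $\widehat{\CO'}_b$) rather than in the possibly non-Hausdorff stalk $\CO'_b$.
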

\subsection{Proof of Theorem \ref{thm:levelsets}}\label{subsec:levelsets}
In this subsection, let \[\varphi=(\overline\varphi, \varphi^*):\ (M',\CO')\rightarrow (M,\CO)\] be a morphism of formal manifolds, and let $a\in M$. 
The main goal is to construct a locally ringed space $\varphi^{-1}(a)$.  And then to prove that it is a formal manifold and satisfies the Cartesian diagram in Theorem \ref{thm:levelsets},  provided that  $\varphi$ is standardizable near $b$ for every $b\in \overline{\varphi}^{-1}(a)$.



For this purpose, we first provide a generalization of formal spectra (see \cite[Section 5.3]{CSW1}) in the setting of topological $\BC$-algebras. Let $A$ be a topological $\BC$-algebra.  Recall that a continuous character of $A$ is a continuous $\BC$-algebra homomorphism $A\rightarrow \BC$. Define the formal spectrum of $A$ to be the set
\be\label{eq:specf}
  \mathrm{Specf}(A):=\{\textrm{continuous character of $A$}\},
\ee and equip it with the coarsest topology such that the map
\[
  \mathrm{Specf}(A)\rightarrow \BC, \quad \chi\mapsto \chi(g)
\]
is continuous for all $g\in A$.   For every $\chi\in \mathrm{Specf}(A)$, write $A_\chi$ for the localization of the ring $A$ at the maximal ideal $\ker\chi$. 
Define
\be \label{eq:OA}
  \CO_A:\ U\mapsto \CO_A(U):=\left\{g\in \prod_{\chi\in U} A_\chi\mid g\textrm{ is locally represented by $A$ }\right\}.
\ee Here $U$ is an open subset of $\mathrm{Specf}(A)$, $g$ is called  locally represented by $A$ if for every $\chi_0\in U$,
 there is an open neighborhood $U_0$ of $\chi_0$ in $U$,
 and an element $g_0\in A$ such that  $\{g_\chi\}_{\chi\in U_0}$ equals the image of $g_0$ under the canonical map
\[ A\rightarrow \prod_{\chi\in U_0}A_\chi.\]
Together with the obvious restriction maps, $\CO_A$ is a sheaf of $\BC$-algebras over $\mathrm{Specf}(A)$. Then $(\mathrm{Specf}(A),\CO_{A})$ is a good locally ringed space over $\Spec(\BC)$. Here and hence follow,  we say that a locally ringed space $(M_1, \CO_1)$ over $\Spec (\BC)$ is good, if for each point $b_1\in M_1$\[\CO_{1,b_1}/\m_{b_1}=\BC \quad (\text{$\m_{b_1}$ is the maximal ideal of the stalk $\CO_{1,b_1}$}).\]  
All good locally ringed spaces over $\Spec (\BC)$ form a full subcategory of the category of locally ringed spaces over $\Spec (\BC)$.

Let $\phi: A'\rightarrow A$ be a continuous homomorphism of topological $\BC$-algebras. Then it induces a continuous map 
\be\label{eq:overlinephi} \overline{\phi}:\  \mathrm{Specf} (A)\rightarrow \mathrm {Specf} (A'),\quad \chi\mapsto \chi \circ \phi\ee and a homomorphism 
\[\phi^*_{\chi}:\ A'_{\overline{\phi}(\chi)}\rightarrow A_{\chi}\]
for every $\chi\in \mathrm{Specf}(A)$.
 Similar to \cite[Lemma 5.10]{CSW1}, there is a morphism 
    \[\mathrm{Specf}(\phi)=(\overline{\phi}, \phi^*):\  (\mathrm{Specf} (A),\CO_A)\rightarrow (\mathrm {Specf} (A'), \CO_{A'})\] of good locally ringed spaces over $\Spec (\BC)$  such that for every open subset $U'\subset \mathrm{A'}$, the diagram 
\[\begin{CD}
\CO_{A'}(U')@>\phi^*_{U'}>> \CO_A(U)\\ @VVV @VVV \\ A'_{\overline{\phi}(\chi)}@>\phi^*_{\chi}>> A_{\chi}
\end{CD} \quad \text{($U:=\overline{\phi}^{-1}(U')\subset \mathrm{Specf}(A)$)}\]
    commutes for all $\chi\in U$.

 Then we have a contravariant functor
\be\label{eq:functortospace}
   A\mapsto (\mathrm{Specf}(A),\CO_A) \qaq \phi\mapsto \mathrm{Specf}(\phi)
  \ee
 from the category of topological  $\BC$-algebras to the category of good locally ringed spaces over $\Spec (\BC)$.

As usual, if  $\mathrm{Specf}(A)=\emptyset$, then $\CO_A$ is defined by $\emptyset \mapsto \{0\}$. When $\mathrm{Specf}(A)$ is nonempty,
we view $A$ as a subalgebra of $\CO_A(\mathrm{Specf}(A))$ through the canonical injection between them.  

\begin{lemd}\label{lem:thestrangeform}
    Let $(M_1,\CO_1)$ be a good locally ringed space over $\Spec (\BC)$,  \[\overline{\phi}:\ M_1\rightarrow \mathrm{Specf}(A)\]  a continuous map between topological spaces, and  \[\phi: \ A\rightarrow \CO_1(M_1)\]  a homomorphism of $\BC$-algebras. 
    Assume that $\mathrm{Specf}(A)$ is nonempty, and 
    \be \label{eq:condition}\mathrm{Ev}_{b_1}(\phi(g))=\overline{\phi}(b_1)(g)\ee for each $b_1\in M_1$ and $g\in A$,
    where $\mathrm{Ev}_{b_1}$ is the composition map  \[ \CO_1(M_1)\rightarrow \CO_{1,b_1}\rightarrow \CO_{1,b_1}/\m_{b_1}=\BC.\] Then there is a unique  sheaf homomorphism \[\phi^*: \ \overline{\phi}^{-1}\CO_A\rightarrow \CO_1\] such that   the restriction \[\phi^*|_{A}:\  A\rightarrow \CO_1(M_1)\]  of $\phi^*: \CO_A(\mathrm{Specf}(A))\rightarrow \CO_1(M_1)$ equals $\phi$. 
\end{lemd}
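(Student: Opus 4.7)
The plan is to construct $\phi^*$ stalk-by-stalk and then glue to a morphism of sheaves. Uniqueness is immediate: any section of $\overline{\phi}^{-1}\CO_A$ is, locally on $M_1$, represented by (the image of) an element of $A$ viewed as a global section of $\CO_A$, so the requirement $\phi^*|_A = \phi$ forces the value of $\phi^*$ on such local representatives.

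For the stalk construction, fix $b_1 \in M_1$ and set $\chi = \overline{\phi}(b_1) \in \mathrm{Specf}(A)$. For any $h \in A \setminus \ker\chi$, hypothesis \eqref{eq:condition} gives $\mathrm{Ev}_{b_1}(\phi(h)) = \chi(h) \neq 0$, so $\phi(h)$ has germ outside $\m_{b_1}$ and hence becomes a unit in the local ring $\CO_{1,b_1}$. Therefore the universal property of localization extends $\phi$ uniquely to a local ring homomorphism $\phi_{b_1} \colon A_\chi \to \CO_{1,b_1}$. Under the standard identification $(\overline{\phi}^{-1}\CO_A)_{b_1} = (\CO_A)_\chi$ (which equals $A_\chi$: surjectivity onto $A_\chi$ comes from the definition of $\CO_A$ via local representation, and injectivity follows by clearing denominators on an open neighborhood where the denominator is nonvanishing), this yields the desired stalk-level map.

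To pass to sections, given open $V \subset M_1$ and $U \supset \overline{\phi}(V)$ in $\mathrm{Specf}(A)$, and $g \in \CO_A(U)$, cover $V$ by open sets $V_\alpha$ together with elements $g_\alpha \in A$ that locally represent $g$ on a neighborhood of $\overline{\phi}(V_\alpha)$ in $U$. Define $\phi^*_V(g)|_{V_\alpha} := \phi(g_\alpha)|_{V_\alpha}$ and glue. On an overlap $V_\alpha \cap V_\beta$, for each $b_1$ with $\chi = \overline{\phi}(b_1)$, the elements $g_\alpha$ and $g_\beta$ have identical images in $A_\chi$, so by the stalk map constructed above, $\phi(g_\alpha)$ and $\phi(g_\beta)$ have the same germ at $b_1$; the sheaf property of $\CO_1$ then yields equality of sections on $V_\alpha \cap V_\beta$. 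Passing from this presheaf-level construction to a morphism from the sheafification $\overline{\phi}^{-1}\CO_A$ is the universal property of sheafification.

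The main obstacle is precisely this well-definedness on overlaps: one must bridge from pointwise agreement in each $A_\chi$ (a consequence of \eqref{eq:condition} plus the local-representation structure of $\CO_A$) to genuine equality of sections of $\CO_1$ on an open set. The other steps — compatibility with restrictions, gluing across the cover $\{V_\alpha\}$, and verifying $\phi^*|_A = \phi$ (which holds tautologically, since the global section of $\CO_A$ associated with $a \in A$ is everywhere represented by $a$ itself) — are routine once the overlap argument is in hand.
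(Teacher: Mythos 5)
Your proof is correct and follows essentially the same route as the paper's: condition \eqref{eq:condition} makes every element of $A\setminus\ker\chi$ a unit in $\CO_{1,b_1}$, so $\phi$ factors through $A_\chi$ by the universal property of localization, and the section-level homomorphism is then assembled from local representatives in $A$, with well-definedness on overlaps checked stalkwise and settled by the separatedness of $\CO_1$. One small correction: the identification $(\CO_A)_\chi = A_\chi$ is not needed, and its surjectivity half is false for a general topological $\BC$-algebra (the image of $(\CO_A)_\chi$ in $A_\chi$ is only the image of $A$, which need not contain all fractions $a/s$, e.g.\ for $A=\BC[x]$); fortunately your construction only uses the natural projection $\CO_A(U)\to A_\chi$ followed by $\phi_{b_1}$, so nothing breaks.
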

\begin{proof}
    By \eqref{eq:condition}, 
    there is  a unique local homomorphism $\phi^*_{b_1}: A_{\overline{\phi}(b_1)} \rightarrow \CO_{1,b_1}$
    such that the diagram 
    \[\begin{CD}A@>\phi>> \CO_1(M_1)\\
    @VVV @VVV\\
    A_{\overline{\phi}(b_1)} @>\phi^*_{b_1}>> \CO_{1,b_1}
    \end{CD}\]commutes.
    As shown in the proof of \cite[Lemma 5.10]{CSW1}, for every open subset $U$ of $\mathrm{Specf
    }(A)$ and  every open subset $U_1$ of $M_1$ such that $\overline{\phi}(U_1)\subset U$, there is  a $\BC$-algebra homomorphism 
    \[\phi^*_{U,U_1}:\ \CO_{A}(U)\rightarrow \CO_1(U_1)\] such that the diagram 
    \[ \begin{CD}
               \CO_{A}(U) @> \phi^*_{U,U_1}  >>  \CO_{1}(U_1)\\
            @V  VV           @V V V\\
            A_{\overline{\phi}(b_1)} @> \phi_{b_1}^*>>  \CO_{1,b_1} \\
  \end{CD}\]commutes. Note that the family $$\{\phi^*_{U,U_1}: \CO_{A}(U)\rightarrow \CO_1(U_1)\}_{\,\textrm{ $U$ and $U_1$ are as above 
  }}$$ is compatible with respect to the restriction maps. This gives  a sheaf homomorphism
 \[
   \phi^*: \ \overline \phi^{-1} \CO_A\rightarrow \CO_1
 \] satisfying  \[\phi^*|_{A}=\phi:\  A\rightarrow \CO_1(M_1).\]

 The uniqueness of such a sheaf homomorphism follows from the fact that the canonical map $\CO_1(U_1)\rightarrow \prod_{b_1\in U_1}\CO_{1,b_1}$ is injective. 
\end{proof}

Let $I'$ denote the ideal of $\CO'(M')$  generated by the image of \[\{f\in \CO(M)\,:\, f(a)=0\}\] under the map $\varphi^*:\CO(M)\rightarrow \CO'(M')$. 
Equip $\CO'(M')/I'$ with the quotient topology, which makes it  a topological $\BC$-algebra. 
 By  the functor \eqref{eq:functortospace} and \cite[Theorem 5.7]{CSW1}, there is   a morphism
\[\mathrm{Specf}(\pi)=(\overline{\pi},\pi^*):\ \mathrm{Specf}(\CO'(M')/I')\rightarrow \mathrm{Specf}(\CO'(M'))=M' \] of good locally ringed spaces over $\Spec (\BC)$ induced by the quotient homomorphism
\[\pi:\ \CO'(M')\rightarrow \CO'(M')/I'.\]
Here and below, if there is no confusion, we will often not distinguish a locally ringed space $(M_1,\CO_1)$ with its underlying topological space $M_1$.

Note that 
the continuous map \be\label{eq:overlinepi}  \overline{\pi}:\ \mathrm{Specf}(\CO'(M')/I') \rightarrow \mathrm{Specf}(\CO'(M'))=M',\quad \chi\mapsto \chi \circ \pi\ee is injective.
 
\begin{lemd}\label{lem:=-1a} The image of \eqref{eq:overlinepi} is  $\overline{\varphi}^{-1}(a)$. Furthermore, the induced map
\be \label{eq:resofpi} \overline{\pi}:\ \mathrm{Specf}(\CO'(M')/I')\rightarrow \overline{\varphi}^{-1}(a)\ee of \eqref{eq:overlinepi} is  a topological isomorphism, where $\overline{\varphi}^{-1}(a)$ is endowed with the subspace topology of $M'$.
\end{lemd}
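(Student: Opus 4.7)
My strategy is to combine the formal-spectrum identification $M'=\mathrm{Specf}(\CO'(M'))$ of \cite[Theorem 5.7]{CSW1} with the universal property of the quotient $\pi\colon \CO'(M')\to \CO'(M')/I'$. Under that identification, each $b'\in M'$ corresponds to the continuous character $\mathrm{Ev}_{b'}$, and a continuous character $\chi'$ of $\CO'(M')$ lies in the image of $\overline{\pi}$ if and only if $\chi'$ factors through $\pi$, i.e.\ if and only if $\chi'$ vanishes on $I'$. By the definition of $I'$, this is equivalent to $\chi'(\varphi^*(f))=0$ for every $f\in\CO(M)$ with $f(a)=0$.

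To evaluate this condition on a point $b'\in M'$, I would use that $\varphi^*_{b'}\colon \CO_{\overline{\varphi}(b')}\to \CO'_{b'}$ is a local homomorphism, which (together with the definitions of $\mathrm{Ev}_{b'}$ and $f(a)$ via $\delta$) yields the key identity
\[
\mathrm{Ev}_{b'}(\varphi^*(f))\;=\;f(\overline{\varphi}(b'))\quad\text{for all}\ f\in\CO(M).
\]
Hence $\mathrm{Ev}_{b'}$ lies in the image of $\overline{\pi}$ precisely when every formal function on $M$ vanishing at $a$ also vanishes at $\overline{\varphi}(b')$. To upgrade this to $\overline{\varphi}(b')=a$, I invoke that formal functions separate the points of $M$: given any $b\in M$ with $b\neq a$, choose a smooth bump function on $\underline{M}$ equal to $0$ at $a$ and to $1$ at $b$, and lift it to $\CO(M)$ via the surjection $\CO(M)\twoheadrightarrow \underline{\CO}(M)=\RC^\infty(\underline{M})$ of \cite[Proposition 2.14]{CSW1}. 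This identifies the image of \eqref{eq:overlinepi} with $\overline{\varphi}^{-1}(a)$ as a set, and injectivity of \eqref{eq:overlinepi} is immediate from the surjectivity of $\pi$.

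For the topological claim, I would argue directly with the subbasic opens $U_{g,V}:=\{\chi\mid \chi(g)\in V\}$ defining the formal-spectrum topology. For $g\in\CO'(M')$ and $V\subset\BC$ open one has $\overline{\pi}^{-1}(U_{g,V})=U_{\pi(g),V}$, giving continuity; conversely, since $\pi$ is surjective, every subbasic open of $\mathrm{Specf}(\CO'(M')/I')$ has the form $U_{\pi(g),V}$ for some $g\in\CO'(M')$, and its image under $\overline{\pi}$ is $U_{g,V}\cap\overline{\varphi}^{-1}(a)$, which is open in the subspace topology inherited from $M'=\mathrm{Specf}(\CO'(M'))$. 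Combined with the bijectivity established above, this gives the desired topological isomorphism.

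The only non-routine step I foresee is the point-separation input for formal functions on $M$; everything else is bookkeeping with characters and subbasic opens. The cleanest route to separation is through the reduction together with partitions of unity, rather than trying to exploit formal variables directly.
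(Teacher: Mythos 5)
Your proposal is correct and follows essentially the same route as the paper: identify the image of $\overline{\pi}$ with the characters vanishing on $I'$, match these with the evaluation characters $\mathrm{Ev}_{b'}$ at points of $\overline{\varphi}^{-1}(a)$ via the local-homomorphism identity $\mathrm{Ev}_{b'}(\varphi^*(f))=f(\overline{\varphi}(b'))$, and deduce the homeomorphism from the fact that the formal-spectrum topology is the coarsest one determined by evaluation of elements. The only differences are cosmetic: you make explicit the point-separation step (lifting a bump function through $\CO(M)\twoheadrightarrow\underline{\CO}(M)$) that the paper leaves implicit, and you verify the topological claim directly on subbasic opens where the paper cites \cite[Lemma 5.8]{CSW1}.
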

\begin{proof} 
Let $\chi\in \mathrm{Specf}(\CO'(M')/I')$.  Then for each $f\in \CO(M)$ satisfying $f(a)=0$, we have that \[f(\overline{\varphi}\circ \overline{\pi}(\chi))=\varphi^*f(\overline{\pi}(\chi))= \chi \circ \pi (\varphi^*f)=0.\] This implies that $\overline\varphi \circ \overline{\pi}(\chi) =a$ and then $\overline{\pi}(\chi)\in \overline{\varphi}^{-1}(a)$.

On the other hand, for every $b\in \overline{\varphi}^{-1}(a)$,
recall the continuous character \[\mathrm{Ev}_b:\ \CO'(M')\rightarrow \CO'_{b}\rightarrow \CO'_{b}/\m_b=\BC.\]
Since $\mathrm{Ev}_b(I')=0$, there is  a character $\chi_b: \CO'(M')/I'\rightarrow \BC$ such that $\chi_b\circ \pi=\mathrm{Ev}_b$.
The first assertion then follows. 

For the second one, 
note that the topology on $\mathrm{Specf}(\CO'(M')/I')$ 
coincides with the coarsest topology on $\mathrm{Specf}(\CO'(M')/I')$ such that the injective map
\[\mathrm{Specf}(\CO'(M')/I')\rightarrow \prod_{g\in \CO'(M')/I'}\BC, \quad \chi \mapsto \{\chi(g)\}_{g\in \CO'(M')/I'}\] is continuous. 
Then by \cite[Lemma 5.8]{CSW1}, we have that the map \eqref{eq:resofpi} is a topological isomorphism.
\end{proof}


The following lemma is straightforward (\cf the proof of Proposition \ref{prop:sub=mono}).
\begin{lemd}\label{lem:monoofringedspace2}
     Let  $\phi=(\overline{\phi},\phi^*): (M_1,\CO_1)\rightarrow (M_2,\CO_2)$ be a morphism of good locally ringed spaces over $\Spec(\C)$. Assume that $\overline{\phi}$ is injective and that for each $b_1\in M_1$, the local homomorphism  \[\CO_{2,\overline{\phi}(b_1)}\rightarrow \CO_{1,b_1}\] is surjective. Then $\phi$ is a monomorphism in the category of good locally ringed spaces over $\Spec(\BC)$.
\end{lemd}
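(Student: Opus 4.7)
The plan is to mirror the argument used for Proposition \ref{prop:sub=mono} almost verbatim, replacing the role played there by Proposition \ref{prpd:immersion} with the stalk surjectivity hypothesis of the lemma. Concretely, suppose $\psi_1=(\overline{\psi_1},\psi_1^*)$ and $\psi_2=(\overline{\psi_2},\psi_2^*)$ are two morphisms from a good locally ringed space $(M_0,\CO_0)$ over $\Spec(\BC)$ to $(M_1,\CO_1)$ such that $\phi\circ\psi_1=\phi\circ\psi_2$; I would like to conclude that $\psi_1=\psi_2$. The injectivity of $\overline{\phi}$, applied to $\overline{\phi}\circ\overline{\psi_1}=\overline{\phi}\circ\overline{\psi_2}$, immediately gives $\overline{\psi_1}=\overline{\psi_2}$; call this common continuous map $\overline{\psi}$.

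Next, I would argue pointwise on stalks. For each $a\in M_0$ put $b:=\overline{\psi}(a)\in M_1$ and observe that the composition identities on stalks read
\[
\psi_{1,a}^*\circ\phi_b^*=(\phi\circ\psi_1)_a^*=(\phi\circ\psi_2)_a^*=\psi_{2,a}^*\circ\phi_b^*,
\]
as local homomorphisms $\CO_{2,\overline{\phi}(b)}\to\CO_{0,a}$. Since $\phi_b^*\colon\CO_{2,\overline{\phi}(b)}\to\CO_{1,b}$ is surjective by hypothesis, it is right-cancellable among ring homomorphisms out of $\CO_{1,b}$, so the two local homomorphisms $\psi_{1,a}^*,\psi_{2,a}^*\colon\CO_{1,b}\to\CO_{0,a}$ coincide for every $a\in M_0$.

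It remains to lift this pointwise agreement of stalk maps to an equality of the sheaf homomorphisms $\psi_1^*,\psi_2^*\colon\overline{\psi}^{-1}\CO_1\to\CO_0$. For this I would invoke the standard fact that a section of any sheaf is determined by its collection of germs, applied to the target sheaf $\CO_0$: since $\psi_1^*$ and $\psi_2^*$ induce identical maps on every stalk $\CO_{1,b}\to\CO_{0,a}$, the sections $\psi_1^*(f)$ and $\psi_2^*(f)$ have the same germ at every point of any open set, hence agree. Combined with $\overline{\psi_1}=\overline{\psi_2}$, this yields $\psi_1=\psi_2$ as morphisms of good locally ringed spaces over $\Spec(\BC)$. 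No step poses a serious obstacle; the only thing to be careful about is the pro forma verification that surjectivity of the stalk map of $\phi$ really suffices to cancel $\phi_b^*$ on the right, which is automatic since we are dealing with genuine set-theoretic surjections of $\BC$-algebras.
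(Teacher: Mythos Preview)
Your proposal is correct and matches exactly what the paper intends: the paper does not spell out a proof but simply declares the lemma ``straightforward (\cf the proof of Proposition~\ref{prop:sub=mono})'', and your argument is precisely that proof with the appeal to Proposition~\ref{prpd:immersion} replaced by the stalk-surjectivity hypothesis. The only additional remark you make explicit---that equality of stalk maps forces equality of the sheaf morphisms---is the same standard fact the paper uses implicitly in the proof of Proposition~\ref{prop:sub=mono}.
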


Let $U$ be an open neighborhood of $a$ in $M$, and let $U'$ be an open subset of $M'$ such that $\overline{\varphi}(U')\subset U$. 

\begin{lemd}\label{lem:monoofringedspace}
      The  morphism 
      \[
      \mathrm{Specf}(\pi)|_{\overline{\pi}^{-1}(U')}:\  ( \overline{\pi}^{-1}(U'), \CO_{\CO'(M')/I'}|_{\overline{\pi}^{-1}(U')})\rightarrow (U',\CO'|_{U'})\] is a monomorphism in the category of good locally ringed spaces over $\Spec (\BC)$.
\end{lemd}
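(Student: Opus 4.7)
The plan is to reduce the statement to a direct application of Lemma \ref{lem:monoofringedspace2}. That lemma requires two ingredients for the restricted morphism $\mathrm{Specf}(\pi)|_{\overline{\pi}^{-1}(U')}$: injectivity of the underlying continuous map, and surjectivity of the induced local homomorphism on each stalk.

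The injectivity is immediate from Lemma \ref{lem:=-1a}, which already identifies $\overline{\pi}$ as a topological isomorphism between $\mathrm{Specf}(\CO'(M')/I')$ and $\overline{\varphi}^{-1}(a)\subset M'$; restricting to the open subset $\overline{\pi}^{-1}(U')$ of course preserves injectivity. So the only real content lies in the stalk condition.

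For a point $\chi\in\overline{\pi}^{-1}(U')$, I would first make the stalks of the two structure sheaves explicit. Using the general definition \eqref{eq:OA} of $\CO_A$ together with $M'=\mathrm{Specf}(\CO'(M'))$ and $\CO'=\CO_{\CO'(M')}$ (see \cite[Theorem 5.7]{CSW1}), one identifies
\[
\CO'_{\overline{\pi}(\chi)}\;\cong\;\CO'(M')_{\ker(\chi\circ\pi)}\qquad\text{and}\qquad(\CO_{\CO'(M')/I'})_\chi\;\cong\;(\CO'(M')/I')_{\ker\chi},
\]
and, tracing the construction of $\mathrm{Specf}(\pi)$, the local homomorphism at $\chi$ is precisely the map between these localizations induced by the quotient $\pi\colon\CO'(M')\to\CO'(M')/I'$. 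Since $\pi$ is surjective, so is the induced map on localizations: any $\bar a/\bar b$ in the target with $\bar b\notin\ker\chi$ can be lifted to $a/b$ in the source by choosing preimages under $\pi$, noting that $b\notin\pi^{-1}(\ker\chi)=\ker(\chi\circ\pi)$ since otherwise $\bar b=\pi(b)\in\ker\chi$. Lemma \ref{lem:monoofringedspace2} then yields the monomorphism property.

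The only step that needs genuine care is the identification of the stalks with the corresponding localizations. This is entirely analogous to the affine-scheme situation and is already implicit in the formal-spectrum formalism of \cite[\S\,5.3]{CSW1}; once it is stated carefully, the rest of the argument is purely formal, so I do not expect a serious obstacle in executing this plan.
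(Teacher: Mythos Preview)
Your proposal is correct and follows essentially the same approach as the paper: both reduce to Lemma \ref{lem:monoofringedspace2}, with injectivity of $\overline{\pi}$ coming from Lemma \ref{lem:=-1a} and the surjectivity of the stalk map $\CO'_{\overline{\pi}(\chi)}\to (\CO'(M')/I')_{\chi}$ induced by the surjection $\pi$. The paper's proof is a single sentence asserting this stalk surjectivity without further comment; you supply the extra detail of identifying the stalks with the relevant localizations and observing that a surjective ring homomorphism induces a surjection on localizations, which is exactly the content behind the paper's assertion.
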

\begin{proof}
 The assertion follows from  Lemma \ref{lem:monoofringedspace2} and the fact that for each $\chi\in \mathrm{Specf}(\CO'(M')/I')$, the local homomorphism \[\CO'_{\overline{\pi}(\chi)}\rightarrow (\CO'(M')/I')_{\chi}\] induced by $\pi$ is surjective. 

\end{proof}

Recall the morphism $\varsigma_a$ from \eqref{eq:varsigma}.
\begin{lemd}\label{lem:comofSOJ}
The diagram 
 \be\label{eq:localcart} \begin{CD}
    ( \overline{\pi}^{-1}(U'), \CO_{\CO'(M')/I'}|_{\overline{\pi}^{-1}(U')})@>>> \Spec(\BC)\\ @V \mathrm{Specf}(\pi)|_{\overline{\pi}^{-1}(U')}VV @VV\varsigma_a V \\ (U',\CO'|_{U'})@>\varphi|_{U'}>>(U,\CO|_{U}) 
 \end{CD}\ee 
 commutes.
    
\end{lemd}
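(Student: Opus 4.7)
The plan is to verify commutativity of the diagram \eqref{eq:localcart} first on the underlying topological spaces and then on the structure sheaves, reducing the latter to a stalkwise check. On the topological level, any $\chi\in \overline{\pi}^{-1}(U')$ satisfies $\overline{\varphi}(\overline{\pi}(\chi))=a$ by Lemma \ref{lem:=-1a}, while the composition through $\Spec(\BC)$ sends everything to $\overline{\varsigma_a}(\ast)=a$; so both compositions agree as continuous maps into $U$.

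For the sheaf morphisms, I plan to work stalkwise. Fix $\chi\in \overline{\pi}^{-1}(U')$ and set $b:=\overline{\pi}(\chi)\in \overline{\varphi}^{-1}(a)\cap U'$. The stalk map at $\chi$ coming from the top-right route is $\CO_a\xrightarrow{\delta_a}\BC\hookrightarrow (\CO'(M')/I')_\chi$ (the structural inclusion of the base field), while the stalk map from the left-bottom route is $\CO_a\xrightarrow{\varphi_b^*}\CO'_b\xrightarrow{\pi^*_\chi}(\CO'(M')/I')_\chi$, where $\pi^*_\chi$ is induced by localizing the quotient $\pi:\CO'(M')\to \CO'(M')/I'$ (this is legitimate since $\mathrm{Ev}_b=\chi\circ\pi$, so $\pi^{-1}(\ker\chi)=\ker\mathrm{Ev}_b$).

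The key computation is then as follows. Given $f_a\in \CO_a$, decompose $f_a=f(a)\cdot 1+g_a$ with $g_a\in \m_a$. Using softness of $\m_\CO$ \cite[Corollary 2.13]{CSW1} (equivalently, multiplying a representative of $g_a$ by a bump function that is identically $1$ near $a$ and extending by zero), I obtain $\wt g\in \CO(M)$ with $\wt g(a)=0$ and $\wt g_a=g_a$. By the very definition of $I'$, the global section $\varphi^*(\wt g)$ lies in $I'$, and its germ at $b$ coincides with $\varphi^*_b(g_a)$. Consequently $\pi^*_\chi(\varphi^*_b(g_a))=0$ in $(\CO'(M')/I')_\chi$, so both stalk maps send $f_a$ to $f(a)\cdot 1$.

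Finally, since each $\CO_{\CO'(M')/I'}(V)$ embeds into $\prod_{\chi\in V}(\CO'(M')/I')_\chi$ by the very definition \eqref{eq:OA}, equality of all stalk maps forces equality of the two sheaf morphisms, completing the proof of \eqref{eq:localcart}. The main obstacle is the stalkwise computation: one must show that $\varphi^*_b(g_a)$ is represented by an element of the \emph{global} ideal $I'\subset \CO'(M')$, not merely that it lies in the maximal ideal of $(\CO'(M')/I')_\chi$; this is precisely what the cutoff/softness argument provides.
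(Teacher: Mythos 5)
Your proof is correct, and the underlying identity it exploits is the same one the paper uses, namely that $\varphi^*(f)-f(a)\cdot 1=\varphi^*(f-f(a)\cdot 1)$ lies in $I'$ whenever the germ/section at hand is reduced modulo its value at $a$. The execution, however, is genuinely different. The paper first observes that it suffices to treat $U=M$, $U'=M'$, then writes down the single global commutative square of topological $\BC$-algebras
\[
\begin{CD}
\CO(M)@>\varphi^*>>\CO'(M')\\
@V\mathrm{Ev}_aVV @VV\pi V\\
\BC@>>> \CO'(M')/I',
\end{CD}
\]
(commutative precisely because $\varphi^*(f-f(a))\in I'$ for $f\in\CO(M)$), and applies the contravariant functor $\mathrm{Specf}$ together with the identifications $M=\mathrm{Specf}(\CO(M))$, $M'=\mathrm{Specf}(\CO'(M'))$ to convert this into the commutativity of the diagram of locally ringed spaces. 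You instead verify commutativity directly: first on underlying spaces, then stalk by stalk, and you conclude via the injectivity of $\CO_{\CO'(M')/I'}(V)\to\prod_{\chi\in V}(\CO'(M')/I')_\chi$. The price of working at the level of germs is exactly the point you flag: $g_a\in\m_a$ must be promoted to a \emph{global} section of $\CO(M)$ vanishing at $a$ before the definition of $I'$ applies, and your cutoff argument (via partitions of unity/softness) handles this correctly; you also implicitly use the identification of the stalk $\CO'_b$ with the localization $\CO'(M')_{\ker\mathrm{Ev}_b}$, which is part of the $\mathrm{Specf}$ formalism of \cite{CSW1}. The paper's route is shorter because the global algebra computation needs no cutoff and the functor does the bookkeeping; your route is more elementary and self-contained at the sheaf level, and has the side benefit of handling general $U,U'$ without a separate reduction step.
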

\begin{proof}
It suffices to prove that the assertion holds when $U=M$ and $U'=M'$. In this case, 
       through the functor \eqref{eq:functortospace}, the lemma follows from the obvious commutative diagram 
   \[\begin{CD}
        \CO(M)@>\varphi^*>>\CO'(M')\\@V\mathrm{Ev}_aVV @VV\pi V \\\BC@>>> \CO'(M')/I'.
    \end{CD}\]
\end{proof}

\begin{lemd}\label{lem:pullback}
   The commutative diagram \eqref{eq:localcart}  is a Cartesian diagram in the category of good locally ringed spaces over $\Spec(\BC)$.
\end{lemd}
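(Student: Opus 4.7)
\noindent\textbf{Proof plan for Lemma \ref{lem:pullback}.}

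The plan is to verify the universal property of a pullback directly. Fix any good locally ringed space $(M_1,\CO_1)$ over $\Spec(\BC)$ together with morphisms $\alpha = (\overline\alpha,\alpha^*)\colon (M_1,\CO_1)\to (U',\CO'|_{U'})$ and $\beta\colon (M_1,\CO_1)\to \Spec(\BC)$ satisfying $\varphi|_{U'}\circ \alpha = \varsigma_a\circ \beta$. Since $\Spec(\BC)$ is terminal in the ambient category, $\beta$ is forced; so the only nontrivial task is to build a unique morphism $\gamma\colon (M_1,\CO_1) \to (\overline\pi^{-1}(U'),\CO_{\CO'(M')/I'}|_{\overline\pi^{-1}(U')})$ with $\mathrm{Specf}(\pi)|_{\overline\pi^{-1}(U')}\circ\gamma = \alpha$.

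First I will produce the underlying continuous map. Reading the outer commutativity pointwise forces $\overline\varphi(\overline\alpha(b_1)) = a$ for every $b_1 \in M_1$, so $\overline\alpha$ lands in $\overline\varphi^{-1}(a) \cap U'$. Lemma \ref{lem:=-1a} identifies $\mathrm{Specf}(\CO'(M')/I')$ homeomorphically with $\overline\varphi^{-1}(a)$; restricting this identification to $\overline\pi^{-1}(U')$ yields a unique continuous lift $\overline\gamma\colon M_1 \to \overline\pi^{-1}(U')$ with $\overline\pi\circ\overline\gamma = \overline\alpha$.

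Next I will produce the algebra data needed to invoke Lemma \ref{lem:thestrangeform}. Composing the restriction $\CO'(M') \to \CO'(U')$ with $\alpha^*_{U'}$ gives a continuous $\BC$-algebra homomorphism $\widetilde\alpha^*\colon \CO'(M') \to \CO_1(M_1)$. For any generator $\varphi^*(f)$ of $I'$ (with $f \in \CO(M)$ and $f(a) = 0$), the outer commutativity computes $\widetilde\alpha^*(\varphi^*(f)) = \alpha^*((\varphi|_{U'})^*(f|_U)) = (\varsigma_a\circ\beta)^*(f|_U) = f(a)\cdot 1 = 0$, so $\widetilde\alpha^*$ descends to a homomorphism $\phi\colon \CO'(M')/I' \to \CO_1(M_1)$. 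The compatibility hypothesis of Lemma \ref{lem:thestrangeform} --- that $\mathrm{Ev}_{b_1}(\phi(\bar g)) = \overline\gamma(b_1)(\bar g)$ --- holds because both sides send any lift $g \in \CO'(M')$ to $g(\overline\alpha(b_1))$. Applying that lemma with $A = \CO'(M')/I'$ produces a sheaf homomorphism $\overline\gamma^{-1}\CO_{\CO'(M')/I'} \to \CO_1$ extending $\phi$ on global sections, which together with $\overline\gamma$ yields the required morphism $\gamma$. The commutativity $\mathrm{Specf}(\pi)|_{\overline\pi^{-1}(U')}\circ\gamma = \alpha$ is then checked by unwinding definitions on $\CO'(M')$ --- both sides pull back to $\widetilde\alpha^*$ --- and transported to the sheaf level by the uniqueness clause of Lemma \ref{lem:thestrangeform}.

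For uniqueness I will invoke Lemma \ref{lem:monoofringedspace}: any competitor $\gamma'$ satisfies $\mathrm{Specf}(\pi)|_{\overline\pi^{-1}(U')}\circ\gamma' = \alpha = \mathrm{Specf}(\pi)|_{\overline\pi^{-1}(U')}\circ\gamma$, and since $\mathrm{Specf}(\pi)|_{\overline\pi^{-1}(U')}$ is a monomorphism in the category of good locally ringed spaces over $\Spec(\BC)$, we conclude $\gamma' = \gamma$. The main obstacle I anticipate is bookkeeping: $I'$ is an ideal of the \emph{global} algebra $\CO'(M')$, whereas the target of $\gamma$ lives over the open subspace $\overline\pi^{-1}(U') \subset \mathrm{Specf}(\CO'(M')/I')$, so the sheaf-level data must be reconstructed from global data. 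The remedy is to always pass through $\CO'(M') \to \CO'(U')$ at the global level before factoring through the quotient, and then let Lemma \ref{lem:thestrangeform} localize the resulting sheaf morphism automatically.
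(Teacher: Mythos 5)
Your proposal is correct and takes essentially the same route as the paper: the continuous lift via Lemma \ref{lem:=-1a}, the vanishing of the pulled-back map on the generators of $I'$ so that it descends to $\CO'(M')/I'$, Lemma \ref{lem:thestrangeform} to upgrade the algebra map to a morphism of locally ringed spaces, and the monomorphism property from Lemma \ref{lem:monoofringedspace} for uniqueness. The only (cosmetic) difference is that you handle general $U,U'$ directly by precomposing with the restriction $\CO'(M')\to\CO'(U')$, whereas the paper first treats the case $U=M$, $U'=M'$ and then restricts the resulting morphism to $\overline{\pi}^{-1}(U')$.
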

\begin{proof}Let $(M_1,\CO_1)$ be a good locally ringed space over $\mathrm{Spec}(\BC)$, and 
    \[\psi=(\overline{\psi},\psi^*):\ (M_1,\CO_1)\rightarrow (U',\CO'|_{U'})\]
    a morphism of good locally ringed spaces over $\Spec(\BC)$ such that the diagram 
    \be\label{eq:diapullback} \begin{CD}
        M_1 @>>> \Spec (\BC) \\ @V\psi VV @ VV\varsigma_a  V\\
        U' @>\varphi|_{U'}>>U
    \end{CD}\ee commutes. We aim to prove there is a unique morphism
    \[\theta: \ (M_1,\CO_1)\rightarrow ( \overline{\pi}^{-1}(U'), \CO_{\CO'(M')/I'}|_{\overline{\pi}^{-1}(U')})\] such that the diagram 
 \be\label{eq:univeraldiagg} \xymatrix{&&&\mathrm{Spec}(\BC)\\M_1\ar[urrr]\ar[drrr]^{\psi}\ar[rrr]^{\theta}&&&\overline{\pi}^{-1}(U')\ar[d]^{\mathrm{Specf}(\pi)|_{\overline{\pi}^{-1}(U')}}\ar[u]\\&&& U'}\ee
    commutes.
    The uniqueness of such a morphism $\theta$ follows from the fact that  $\mathrm{Specf}(\pi)|_{\overline{\pi}^{-1}(U')}$ is a monomorphism (see Lemma \ref{lem:monoofringedspace}). In what follows, we prove the existence of $\theta$.


  We first consider the special case when $U=M$ and $U'=M'$. By \eqref{eq:diapullback}, we have that $\mathrm{im}\, \overline{\psi}\subset\overline{\varphi}^{-1}(a)$. In view of Lemma \ref{lem:=-1a},
   there is a continuous  map \be\label{eq:barphi} \overline{\phi}:\ M_1\rightarrow \mathrm{Specf}(\CO'(M')/I')\ee induced by  $\overline{\psi}$. 
On the other hand, for each $f\in \CO(M)$ with $f(a)=0$, the image of $f$ under the composition map
    \[\varphi^*\circ \psi^*:\ \CO(M)\rightarrow \CO'(M')\rightarrow \CO_1(M_1)\] is $0$ by \eqref{eq:diapullback}. Thus $\psi^*(I')=0$. Then there is a unique  continuous homomorphism \[\phi: \CO'(M')/I'\rightarrow \CO_1(M_1)\] of topological $\BC$-algebras  such that the diagram 
    \[\xymatrix{\CO'(M')\ar[d]_{\pi}\ar[dr]^{\psi^*}\\\CO'(M')/I'\ar[r]^{\phi}&\CO_1(M_1)}\]
   commutes. Note that for each $b_1\in M_1$ and $g\in \CO'(M')/I'$, 
   \[\mathrm{Ev}_{\overline{\phi}(b_1)}(g)=\mathrm{Ev}_{\overline\psi(b_1)}(\wt{g})=\mathrm{Ev}_{b_1}(\psi^*(\wt g))=\mathrm{Ev}_{b_1}(\phi(g)),\] 
where $\wt g\in \pi^{-1}(g)$. Then by Lemma \ref{lem:thestrangeform},  we get  a 
 unique morphism 
    \[\mathrm{Specf}(\phi):=(\overline{\phi}, \phi^*):\  M_1\rightarrow \mathrm {Specf} (\CO'(M')/I')\] of locally ringed spaces   such that \[\phi^*|_{\CO'(M'/I')}=\phi:\ \CO'(M’)/I'\rightarrow \CO_1(M_1).\]
    
    Let $b_1\in M_1$. Note that a local homomorphism \[\CO'_{\overline\psi(b_1)}\rightarrow \CO_{1,b_1} \] which satisfies the commutative diagram 
    \[\begin{CD}\CO'(M')@>\psi^*>> \CO_1(M_1)\\
    @VVV @VVV\\
    \CO'_{\overline{\psi}(b_1)} @>>> \CO_{1,b_1}
    \end{CD}\] is unique. This forces that $\psi^*_{b_1}=\phi^*_{b_1}\circ \pi^*_{\overline{\phi}(b_1)}$ and hence $\psi=\mathrm{Specf}(\pi)\circ \mathrm{Specf}(\phi)$.  Therefore, the diagram \eqref{eq:univeraldiagg} 
   commutes by taking $\theta=\mathrm{Specf}(\phi)$.

    For the general case, 
    note that the image of $\overline{\phi}$ 
    is contained in $\overline{\pi}^{-1}(U')$. Here $\overline{\phi}: M_1\rightarrow \mathrm{Specf}(\CO'(M)/I')$ is the map induced by the composition map 
    \[ M_1\xrightarrow{\psi} U'\rightarrow M'\] as in \eqref{eq:barphi}.
    Then  $\mathrm{Specf}(\phi)$ induces a morphism 
   \[\theta:\ (M_1,\CO_1)\rightarrow (\overline{\pi}^{-1}(U'),\CO_{\CO'(M')/I'}|_{\overline{\pi}^{-1}(U')})\] between good locally ringed spaces over $\Spec(\BC)$, which makes
    the diagram \eqref{eq:univeraldiagg}  commutes.  This finishes the proof.

\end{proof}


   Let $n,k,n',k'\in \BN$, and set   (see \eqref{eq:C(n)}) \[N'=\mathrm{C}(n')\subset \BR^{n'},\qquad  N=\mathrm{C}(n)\subset \BR^{n}. \]
   Write $(x_1,x_2,\dots,x_n,y_1,y_2,\dots,y_k)$ and $(u_1,u_2,\dots,u_{n'},z_1,z_2,\dots,z_{k'})$ for the standard coordinate systems of $N^{(k)}$ and $(N')^{(k')}$, respectively.
\begin{lemd}\label{lem:levlinstand}
  Assume that $M'=(N')^{(k')}$, $M=N^{(k)}$, $a=0$, and that the morphism $\varphi$ is given by  \begin{equation}\begin{split}\label{eq:introstrandard1}&(x_1,\dots,x_{r_1},x_{r_1+1},\dots,x_{n-r_2},x_{n-r_2+1},\dots,x_n,y_1,\dots,y_{r_3},y_{r_3+1},\dots,y_k)\\\mapsto\ & (u_1,\dots,u_{r_1},\underbrace{0,\dots,0}_{n-r_1-r_2},z_1,\dots,z_{r_2},z_{r_2+1},\dots,z_{r_2+r_3},\underbrace{0,\dots,0}_{k-r_3})\end{split}\end{equation}for some $r_1,r_2,r_3\in \BN$. Then $\mathrm{Specf}(\CO'(M')/I')$ is a formal manifold. 
\end{lemd}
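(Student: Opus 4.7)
The plan is to show that $\CO'(M')/I'$ is isomorphic as a topological $\BC$-algebra to the algebra of formal functions on an explicit closed formal submanifold of $M'$; the lemma then follows from the contravariant equivalence \cite[Theorem 5.11]{CSW1} between formal $\BC$-algebras and formal manifolds. The crux is the identification
\[
I' = (u_1, \dots, u_{r_1},\, z_1, \dots, z_{r_2+r_3})
\]
as an ideal of $\CO'(M') = \RC^{\infty}(N')[[z_1, \dots, z_{k'}]]$. The inclusion ``$\supseteq$'' is immediate from \eqref{eq:introstrandard1}, since $u_i = \varphi^*(x_i)$ for $i \le r_1$, $z_i = \varphi^*(x_{n-r_2+i})$ for $i \le r_2$, and $z_{r_2+j} = \varphi^*(y_j)$ for $j \le r_3$ all arise as pullbacks of elements of $\CO(M)$ that vanish at the origin. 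For ``$\subseteq$'', I take any $f = \sum_J f_J(x) y^J \in \CO(M)$ with $f(0) = 0$; the fact that $\varphi^*$ kills the middle spatial coordinates $x_{r_1+1}, \dots, x_{n-r_2}$ and the last formal coordinates $y_{r_3+1}, \dots, y_k$ forces $\varphi^*(f)$ to lie in $\RC^{\infty}(N')[[z_1, \dots, z_{r_2+r_3}]]$; its terms involving a positive power of one of $z_{r_2+1}, \dots, z_{r_2+r_3}$ manifestly belong to $(z_{r_2+1}, \dots, z_{r_2+r_3})$, while the remaining piece is a formal series in $z_1, \dots, z_{r_2}$ whose constant coefficient is $f_0(u_1, \dots, u_{r_1}, 0, \dots, 0)$, a smooth function on $N'$ vanishing at the origin and therefore lying in $(u_1, \dots, u_{r_1})$ by Hadamard's lemma in the variables $u_1, \dots, u_{r_1}$.

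Next, I set $N'' := \{u \in N' : u_1 = \dots = u_{r_1} = 0\}$, an open subset of $\BR^{n'-r_1}$, and $M'' := (N'')^{(k'-r_2-r_3)}$. After an automorphism of $M'$ that permutes standard coordinates so as to move $u_1, \dots, u_{r_1}$ into the last $r_1$ spatial slots and $z_1, \dots, z_{r_2+r_3}$ into the last $r_2+r_3$ formal slots, there is a slice morphism $\zeta: M'' \to M'$ of the form covered by Example \ref{ex:closedsub} and Definition \ref{def:defzetar}. This $\zeta$ is a closed embedded immersion, so Theorem \ref{thm:charclosedsub}(b) yields that $\zeta^*: \CO'(M') \to \CO_{M''}(M'')$ is a continuous surjective $\BC$-algebra homomorphism, which is moreover open by \cite[Corollary 4.16]{CSW1}. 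A direct reading of the slice formula identifies its kernel with $(u_1, \dots, u_{r_1}, z_1, \dots, z_{r_2+r_3}) = I'$, so $\zeta^*$ descends to a topological isomorphism $\CO'(M')/I' \xrightarrow{\sim} \CO_{M''}(M'')$ of topological $\BC$-algebras. Consequently $\CO'(M')/I'$ is a formal $\BC$-algebra, and \cite[Theorem 5.11]{CSW1} produces a canonical isomorphism $\mathrm{Specf}(\CO'(M')/I') \cong M''$ of formal manifolds, proving the claim.

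The main obstacle will be the bookkeeping in the ``$\subseteq$'' half of the identification of $I'$: one must carefully handle the Taylor-substitution producing $\varphi^*(f)$, apply Hadamard's lemma only in the subset $\{u_1, \dots, u_{r_1}\}$ of spatial variables, and verify that the resulting infinite formal sum indeed lies in the finitely-generated ideal $(u_1, \dots, u_{r_1}, z_1, \dots, z_{r_2+r_3})$, using the monomial-support characterization of generated ideals in a power series ring over $\RC^{\infty}(N')$. Once this is done, the rest of the argument is a routine invocation of the closed-embedded-immersion machinery of Section \ref{sec:submanifold} combined with the formal-spectrum functor of \cite{CSW1}.
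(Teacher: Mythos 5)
Your proposal is correct and follows essentially the same route as the paper: both identify $\CO'(M')/I'$ with the formal function algebra of the slice $(N'')^{(k'-r_2-r_3)}$ with $N''=N'\cap\{u_1=\dots=u_{r_1}=0\}$, by showing that $I'$ equals the kernel of the (continuous, open, surjective) pullback along the corresponding slice morphism into $M'$, and then invoke the formal-spectrum correspondence of \cite{CSW1}. The only notable difference is that you pass through the explicit description $I'=(u_1,\dots,u_{r_1},z_1,\dots,z_{r_2+r_3})$ via the multivariable Hadamard lemma, which is in fact slightly more careful than the paper's own computation (which divides only by $u_1$ and so, as written, only covers $r_1=1$).
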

\begin{proof}
    Let $n_1=n'-r_1$, $k_1=k'-r_2-r_3$, and set \[N_1:=\{(b_1,\dots,b_{n'})\in N'\mid b_{i}=0 \quad \text{for every $i\leq r_1$}\}.\] Denote by $(u'_1,u'_2,\dots,u'_{n_1},z_1',z_2',\dots,z'_{k_1})$ the standard coordinate system of $N_1^{(k_1)}$.
    Then we have the following commutative  diagram  \be \label{eq:localca}\begin{CD}(N_1)^{k_1} @>>>\Spec (\BC)\\ @V\zeta VV @VV\varsigma_a V \\ (N')^{(k')}@>\varphi>> N^{(k)}, \end{CD}\ee
 where $\zeta=(\overline\zeta,\zeta^*)$ is given by    
   \begin{equation}\begin{split}\label{eq:introstrandard1}&(u_1,\dots,u_{r_1},u_{r_1+1},\dots,u_{n'},z_1,\dots,z_{r_2+r_3},z_{r_2+r_3+1},\dots,z_{k'})\\\mapsto\ & (\underbrace{0,\dots,0}_{r_1},u'_1,\dots,u'_{n_1},\underbrace{0,\dots,0}_{r_2+r_3},z'_1,\dots,z'_{k_1}).\end{split}\end{equation} 
 Note that $\overline{\zeta}$ is the natural inclusion  and for every $g= \sum_{J\in \BN^{k'}}g_Jz^J\in\CO_{N'}^{(k')}(V')$, 
   \be \label{eq:zerainproof}\zeta^*_{V'}(g)=\sum_{J\in \BN^{k_1}}(g_{(0,J)}\circ \overline{\zeta})(z')^J\ee where $V'$ is an open subset of  $N'$ and \[(0,J):=(\underbrace{0,\dots,0}_{r_2+r_3},j_1,\dots,j_{k_1})\in \BN^{k'}\] for $J=(j_1,\dots,j_{k_1})\in \BN^{k_1}$.
   Since $\zeta$ is a closed embedded immersion, the continuous homomorphism \[\zeta^*: \  \CO_{N'}^{(k')}(N')\rightarrow \CO_{N_1}^{(k_1)}(N_1)\] is open and surjective (see Theorem \ref{thm:charclosedsub} (b) and \cite[Corollary 4.16]{CSW1}). 
   
We claim that $I'=\ker \zeta^*$. This will imply that  $\CO_{N'}^{(k')}(N')/I'=\CO_1^{(k_1)}(N_1)$ as topological $\BC$-algebras. Then \[\mathrm{Specf}(\CO_{N'}^{(k')}(N')/I')= \mathrm{Specf}(\CO_{N_1}^{(k_1)}(N_1))=N_1^{(k_1)}\] is a formal manifold (see \cite[Theorem 5.7]{CSW1}), as required. 

Now we turn to prove this claim. If $f\in \CO_N^{(k)}(N)$ with $f(0)=0$, then $\varphi^*(f)\in \ker \zeta^*$ by the diagram \eqref{eq:localca}. Therefore $I'\subset \ker \zeta^*$. 
   On the other hand, let $g\in \ker \zeta^*$. 
  By \eqref{eq:zerainproof}, $g$ has a form  
   \[\sum_{J\in \BN^{k_1}}g_{(0,J)}z^{(0,J)}+\sum_{1\leq i\leq r_2+r_3}z_ih_i\]
  for some $h_i\in \CO_{N'}^{(k')}(N')$ and $g_{(0,J)}\in \mathrm{C}^{\infty}(N')$ with $g_{(0,J)}\circ \overline{\zeta}=0$.  For the case that $r_1=0$, we have $g_{(0,J)}=0$  for each $J\in \BN^{k_1}$ and so $g\in I'$.
 For the case that $r_1\geq 1$,  we have $g_{(0,J)}/u_1\in \mathrm{C}^{\infty}(N')$  for each $J\in \BN^{k_1}$ . This implies that \[g=u_1\cdot(\sum_{J\in \BN^{k_1}}\frac{g_{0,J}}{u_1}z^{(0,J)})+\sum_{1\leq i\leq r_2+r_3}z_ih_i\in I'.\]  So we obtain that $I'=\ker \zeta^*$, and then the proof is finished. 

   \end{proof}


\begin{lemd}\label{lem:lastlem}
    Assume that  for each $b\in \overline{\varphi}^{-1}(a)$, the morphism $\varphi$ is standardizable near $b$. Then $\mathrm{Specf}(\CO'(M')/I')$ is a formal manifold. Furthermore, the morphism \[\mathrm{Specf}(\pi): \ \mathrm{Specf}(\CO'(M')/I')\rightarrow M' \] of formal manifolds is a closed embedded immersion.
\end{lemd}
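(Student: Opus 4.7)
The plan is to reduce the problem to the local model already established in Lemma \ref{lem:levlinstand}. Since, by Lemma \ref{lem:=-1a}, the underlying topological space of $\mathrm{Specf}(\CO'(M')/I')$ is homeomorphic to the closed subset $\overline\varphi^{-1}(a)$ of the paracompact Hausdorff space $M'$, the topological axioms of Definition \ref{def:formalmanifold} are automatic; it remains to equip every point with a chart of the required form.

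First I would use the standardizability hypothesis: for each $b\in\overline\varphi^{-1}(a)$ it supplies charts $(U'_b,(N'_b)^{(k'_b)},\vartheta'_b)$ of $M'$ around $b$ and $(U_b,N_b^{(k_b)},\vartheta_b)$ of $M$ around $a$, with $\overline\varphi(U'_b)\subset U_b$, such that $\widetilde{\varphi}_b:=\vartheta_b\circ\varphi|_{U'_b}\circ(\vartheta'_b)^{-1}$ has the standard form \eqref{eq:introstrandard}. After translating coordinates and shrinking, I would further assume $\vartheta'_b(b)=0$, $\vartheta_b(a)=0$, and $N'_b=\mathrm{C}(n'_b)$, $N_b=\mathrm{C}(n_b)$, so that the precise hypotheses of Lemma \ref{lem:levlinstand} apply to $\widetilde{\varphi}_b$.

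Next, I would invoke Lemma \ref{lem:pullback} with $U=U_b$ and $U'=U'_b$ to identify $(\overline\pi^{-1}(U'_b),\CO_{\CO'(M')/I'}|_{\overline\pi^{-1}(U'_b)})$ as the pullback of $\varsigma_a:\mathrm{Spec}(\BC)\to U_b$ along $\varphi|_{U'_b}:U'_b\to U_b$ in the category of good locally ringed spaces over $\mathrm{Spec}(\BC)$. Transporting this pullback through the chart isomorphisms $\vartheta_b,\vartheta'_b$ and applying Lemma \ref{lem:pullback} a second time to $\widetilde{\varphi}_b$ (taking $U'=M'$ and $U=M$ there) exhibits it as canonically isomorphic to the pullback associated with $\widetilde{\varphi}_b$, which by Lemma \ref{lem:levlinstand} is a formal manifold. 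Hence $\overline\pi^{-1}(U'_b)$ inherits a formal manifold structure for every $b$; since $\{\overline\pi^{-1}(U'_b)\}_b$ is an open cover of the underlying space, $\mathrm{Specf}(\CO'(M')/I')$ is a formal manifold.

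The second assertion is then routine: by Lemma \ref{lem:=-1a}, the underlying map $\overline\pi$ is a topological embedding whose image $\overline\varphi^{-1}(a)$ is closed in $M'$ by continuity of $\overline\varphi$; and to verify that $\mathrm{Specf}(\pi)$ is an immersion, by Proposition \ref{prpd:immersion} it suffices to check that the stalk homomorphism $\pi^*_\chi:\CO'_{\overline\pi(\chi)}\to(\CO'(M')/I')_\chi$ is surjective for every $\chi$, which is immediate from the quotient construction of the target stalk. The main obstacle lies in the gluing step of the first assertion: the local formal manifold structures produced on different $\overline\pi^{-1}(U'_b)$ must be shown to agree on overlaps. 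This agreement is not computational but conceptual, forced by the uniqueness of pullbacks up to unique isomorphism in the category of good locally ringed spaces (applied via Lemma \ref{lem:pullback} on both sides of each identification); nevertheless, keeping the chart-transported diagrams coherent is the delicate point of the argument.
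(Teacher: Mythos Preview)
Your proof is correct and matches the paper's approach: the paper simply cites Lemmas \ref{lem:pullback} and \ref{lem:levlinstand} for the first assertion and invokes Theorem \ref{thm:charclosedsub}(b) (via the surjectivity of $\pi:\CO'(M')\to\CO'(M')/I'$) for the second. Your concern about gluing, however, is misplaced: the structure sheaf $\CO_{\CO'(M')/I'}$ is already globally defined, and the local argument merely verifies that the \emph{given} restriction $(\overline\pi^{-1}(U'_b),\CO_{\CO'(M')/I'}|_{\overline\pi^{-1}(U'_b)})$ is isomorphic, as a locally ringed space, to a formal manifold---so there is nothing to glue and no overlap compatibility to check.
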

\begin{proof} Lemmas \ref{lem:pullback} and \ref{lem:levlinstand} imply that $\mathrm{Specf}(\CO(M)/I')$ is a formal manifold, while 
Theorem \ref{thm:charclosedsub} implies that  $\mathrm{Specf}(\pi)$ is a closed embedded immersion. 
 \end{proof}

Theorem \ref{thm:levelsets} follows from Lemma \ref{lem:lastlem}
and Lemma \ref{lem:pullback} (with $U=M$ and $U'=M'$ in the diagram \eqref{eq:localcart}). 

\section*{Acknowledgement}
Chen Fulin is supported by the Natural Science Foundation of Xiamen, China (No. 3502Z202473005), the National Natural Science Foundation of China (Nos. 12131018 and 12471029) and the Fundamental Research Funds for the Central Universities (No. 20720230020).
	Sun Binyong is supported by  National Key R \& D Program of China (Nos. 2022YFA1005300 and 2020YFA0712600) and New Cornerstone Investigator Program.
	The first and the third authors would like to thank Institute for Advanced Study in Mathematics, Zhejiang University. Part of this work was carried out while they were visiting the institute.


\begin{thebibliography}{99}









\bibitem[B]{Br2}
 Glen E. Bredon, \textit{Sheaf Theory}, Graduate Texts in Mathematics 170, Springer-Verlag, New York, 1997.






\bibitem[CCF]{CCF} C. Carmeli,
L. Caston, and A. Fioresi, \textit{Mathematical Foundations of Supersymmetry}, EMS Series of Lectures in Mathematics,
Vol 15, 2011.

\bibitem[CSW1]{CSW1} F. Chen, B. Sun, and 
C. Wang, \textit{Formal manifolds: foundations}, to appear in Sci. China Math., https://doi.org/10.1007/s11425-024-2374-6

\bibitem[CSW2]{CSW2} F. Chen, B. Sun, and 
C. Wang, \textit{Function spaces on formal manifolds}, arXiv:2407.09329

\bibitem[CSW3]{CSW3} F. Chen, B. Sun, and C. Wang, \textit{Poincaré's lemma for formal manifolds}, arXiv:2408.04263 


\bibitem[H]{Ha} M. Hazewinkel, \textit{Formal Groups and Applications}, AMS Chelsea Publishing, Providence, RI, 2012.










\bibitem[L]{L} J. M. Lee, \textit{Introduction to Smooth Manifolds}, Springer Science+Business Media New York 2003, 2013

%












%
%
\bibitem[W]{War}
F. Warner, \textit{Foundations of Differentiable Manifolds and Lie Groups}, Graduate Texts in Mathematics 96, Springer-Verlag, New York, 1983.







\end{thebibliography}
\end{document}